\newtheorem{theoremA}{Theorem}
\renewcommand{\thetheoremName}
\newtheorem{proposition[[]]}[theoremName]{Proposition G}
\newtheorem{theorem}{Theorem}[section]
\newtheorem{lemma}[theorem]{Lemma}
\newtheorem{proposition}[theorem]{Proposition}
\newtheorem{corollary}[theorem]{Corollary}
\theoremstyle{definition}
\newtheorem{definition}[theorem]{Definition}
\newtheorem{remark}{Remark}
\numberwithin{equation}{section}
\newcommand{\dist}{\operatorname{dist}}
\newcommand{\Vol}{\operatorname{Vol}}
\newcommand{\C}{\operatorname{Cap}}
\newcommand{\kam}{\mathbb{K}^{m}(b)}
\newcommand{\erre}{\mathbb{R}}
\begin{document}

\title[Ends, fundamental tones, and capacities]{Ends, fundamental tones, and capacities of minimal submanifolds via extrinsic comparison theory}
\author[V. Gimeno]{Vicent Gimeno}
\address{Departament de Matem\`{a}tiques- INIT-IMAC, Universitat Jaume I, Castell\'o,
Spain.}
\email{gimenov@uji.es}
\thanks{Work partially supported by DGI grant MTM2010-21206-C02-02.}

\author[S. Markvorsen]{S. Markvorsen}
\address{DTU Compute, Mathematics, Technical University of Denmark.}
\email{stema@dtu.dk}

\subjclass[2010]{Primary 53A, 53C}


\keywords{First Dirichlet eigenvalue, Capacity, Effective resistance, Minimal submanifolds}

\begin{abstract} We study the volume of extrinsic balls and the capacity of extrinsic annuli in minimal submanifolds which are properly immersed with controlled radial sectional curvatures into an ambient manifold with a pole. The key results are concerned with the comparison of those volumes and capacities with the corresponding entities in a rotationally symmetric model manifold. Using the asymptotic behavior of the volumes and capacities we then obtain upper bounds for the number of ends as well as estimates for the fundamental tone of the submanifolds in question. \end{abstract}

\maketitle

\section{Introduction}
Let $M$ be a complete non-compact Riemannian manifold. Let $K\subset M$ be a compact set with non-empty interior and smooth boundary. We denote by $\mathcal{E}_K(M)$  the number of connected components $E_1,\cdots , E_{\mathcal{E}_K(M)}$ of $M\setminus K$ with non-compact closure.  Then $M$ has  $\mathcal{E}_K(M)$ ends $\{E_i\}_{i=1}^{\mathcal{E}_K(M)}$ with respect to $K$ (see e.g. \cite{GriEnds}), and the \emph{global} number of ends $\mathcal{E}(M)$ is given by
 \begin{equation}
 \mathcal{E}(M)=\sup_{K\subset M} \mathcal{E}_K(M)\quad,
 \end{equation}
 where $K$ ranges on the compact sets of $M$ with non-empty interior and smooth boundary.
 
The number of ends of a manifold can be bounded by geometric restrictions. For example, in the particular setting of an $m-$dimensional minimal submanifold $P$ which is properly immersed into Euclidean space $\erre^n$, the number of ends $\mathcal{E}(P)$ is known to be related to the extrinsic properties of the immersion. Indeed, V. G. Tkachev proved in \cite[Theorem 2]{Tkachev}  (see also \cite{ChSubv}) that for any properly immersed $m-$dimensional minimal submanifold $P$ in $\erre^n$ with finite volume growth $V_{w_0}(P)<\infty$ the number of ends is bounded from above by
\begin{equation}\label{tka-ine}
\mathcal{E}(P)\leq C_m V_{w_0}(P)\quad,
\end{equation}
where  $C_m=1$ ($C_m={2^m}$ in the original \cite{Tkachev}) and the volume growth $V_{w_0}(P)$ is
\begin{equation}\label{tka-ine2}
V_{w_0}(P)=\lim_{R\to \infty}\frac{\Vol\left(P\cap B_R^{\erre^n}\left(o\right) \right)}{\Vol\left(B_R^{\erre^m}\left(o\right)\right)} \quad.
 \end{equation}
Here $\Vol\left(B_R^{\erre^m}\left(o\right)\right)$ is the volume of a geodesic ball $B_R^{\erre^m}\left(o\right)$ of radius $R$ centered at $o$ in $\erre^m$.  The inequality (\ref{tka-ine}) thus shows a significant relation between the number of ends (i.e. a topological property) and the behavior of a quotient of volumes (i.e. a metric property).
  
Motivated by Tkachev's application of the \emph{volume quotient} appearing in equation (\ref{tka-ine2}), we will consider the corresponding \emph{flux quotient} and \emph{capacity quotient} of the minimal submanifolds. These quotients are constructed in the same way as indicated by the \emph{volume quotient} but here we generalize the setting as well as Tkachev's result to minimal submanifolds in more general ambient spaces as alluded to in the abstract. Specifically we assume that the minimal immersion goes into an ambient manifold $N$ with a pole and with sectional curvatures $K_N$ bounded from above by the radial curvatures $K_w$ of a rotationally symmetric model space $M_w^n=\erre^+\times \mathbb{S}_1^{n-1}$, with warped metric tensor $g_{M_w^n}$ constructed  using a positive warping function $w:\erre^+\to \erre^+$ in such a way that  $g_{M_w^n}=dr^2+w(r)^2g_{\mathbb{S}_1^{n-1}}$ is also balanced from below (see \cite{MP2} and \S \ref{Prelim} for a precise definitions).
 
 Our generalization of inequality (\ref{tka-ine}) is thence the following:
  
 \begin{theorem}\label{flat-ends}
 Let $\varphi: P^m\to N^n$ be a proper minimal and complete immersion into a $n-$dimensional ambient manifold $N^n$ which possesses a pole $o\in N^n$ and its sectional curvatures $K_N$ at any point $p\in N$ are bounded by above by the radial curvatures $K_w$ of a balanced from below model space $M_w^n$
 \begin{equation}
 K_N\left(p\right) \leq K_{M_w^n}\left(r\left(p\right)\right)=-\frac{w''}{w}\left(r\left(p\right)\right)\quad.
  \end{equation}
  Suppose moreover, that $w'>0$ and there exist $R_0$ such that  $K_{M_w^n}(R)\leq 0$ for any $R>R_0$. Then, the number of ends $\mathcal{E}_{D_R}(P)$   with respect to the extrinsic ball $D_R=P\cap B_R^N(o)$ for $R>R_0$ is bounded from above by
  \begin{equation}\label{resul1}
  \mathcal{E}_{D_R}(P)\leq \left(\frac{2}{1-\frac{R}{t}} \right)^m\left(\frac{\int_0^tw(s)^{m-1}ds}{t^m/m}\right)\frac{\Vol(D_t)}{\Vol(B^w_t)} \quad,
  \end{equation}
  for any $t>R$.
 \end{theorem}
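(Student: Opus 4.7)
The plan is to follow Tkachev's original packing argument, adapted to the present extrinsic-comparison setting. Inside each end of $P$ relative to $D_R$ I will locate a disjoint region whose volume admits a common lower bound; by disjointness the sum of these volumes is at most $\Vol(D_t)$, which will give the stated inequality after rearrangement.

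First, I enumerate the ends as $E_1,\dots,E_{\mathcal{E}_{D_R}(P)}$. Since $\varphi$ is proper and each $\overline{E_i}$ is non-compact, the radial distance $r(x)=\dist_N(o,\varphi(x))$ attains every value in $[R,\infty)$ on $\overline{E_i}$, so I can select $p_i\in E_i$ with $r(p_i)=(R+t)/2$. I then take $F_i$ to be the connected component of $B^N_{(t-R)/2}(p_i)\cap P$ that contains $p_i$. The triangle inequality gives $B^N_{(t-R)/2}(p_i)\subset B^N_t(o)\setminus\overline{B^N_R(o)}$, whence $F_i\subset E_i\cap D_t$; in particular the $F_i$ are pairwise disjoint since distinct ends are.

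The central step, and the main obstacle, is the density-type lower bound
$$
\Vol(F_i)\;\ge\;\Vol\!\left(B^{\erre^m}_{(t-R)/2}(o)\right).
$$
Here the hypotheses $R>R_0$, $w'>0$ and $K_{M_w^n}(R)\le 0$ for $R>R_0$ enter decisively: because $r(p_i)=(R+t)/2>R_0$ and the ball has radius $(t-R)/2$, a second triangle-inequality argument shows that every minimising geodesic from $p_i$ to a point of $B^N_{(t-R)/2}(p_i)$ remains in the non-positively curved region $\{r>R_0\}$, ruling out focal points of $p_i$ inside that ball. Consequently $\dist_N(p_i,\cdot)$ is smooth on the ball and the Hessian comparison yields the Euclidean-type lower bound on $\Hess(\dist_N(p_i,\cdot))$, which in turn powers the classical monotonicity of the density of the minimal submanifold $F_i$ inside a geodesic ball, producing the desired inequality. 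The technical subtlety I would need to address is that the extrinsic comparison machinery of the paper is framed for balls centred at the pole $o$, and I must verify carefully that it transfers to balls centred at the off-pole point $p_i$ in this ``non-positive-enough'' region.

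Once this bound is established, the proof is finished: summing over $i$ and using disjointness,
$$
\mathcal{E}_{D_R}(P)\cdot \Vol\!\left(B^{\erre^m}_{(t-R)/2}(o)\right)\;\le\;\sum_{i=1}^{\mathcal{E}_{D_R}(P)}\Vol(F_i)\;\le\;\Vol(D_t),
$$
and inserting $\Vol(B^{\erre^m}_\rho(o))=\tfrac{1}{m}\omega_{m-1}\rho^{m}$ together with $\Vol(B^w_t)=\omega_{m-1}\int_0^t w(s)^{m-1}\,ds$, a routine algebraic manipulation recovers~\eqref{resul1}.
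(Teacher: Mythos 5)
Your proposal is correct and follows essentially the same route as the paper's own proof: Tkachev's packing argument, picking in each end a point at extrinsic radius $(R+t)/2$, bounding from below the volume of the piece of $P$ inside the ambient ball of radius $(t-R)/2$ centred there by the Euclidean ball volume via monotonicity of the density quotient (the paper invokes its Theorem \ref{isoperimetric}(2) with the model $w_0(r)=r$ exactly where you invoke the classical monotonicity formula), and then summing over the disjoint pieces and rearranging. The off-pole subtlety you flag is present, and left equally implicit, in the paper's argument; the only other difference is cosmetic: the paper first rules out bounded components of $P\setminus D_R$ via subharmonicity of $F\circ r$ and then counts all components, whereas you work directly with the components of non-compact closure, using properness to reach the radius $(R+t)/2$.
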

 
 Using the above theorem we can estimate the global number of ends as follows:
 
 \begin{corollary}\label{col-ends}
 Under the assumptions of theorem \ref{flat-ends}, suppose moreover
 \begin{equation}\label{ine-ends}
\limsup_{t\to\infty} \left(\frac{\int_0^tw(s)^{m-1}ds}{t^m/m}\right)=C_w<\infty \quad,
  \end{equation}
  and suppose also that the submanifold has finite volume growth, namely
 \begin{equation}
\Vol_{w}(P)=\lim_{t\to\infty}\frac{\Vol(D_t)}{\Vol(B^w_t)}<\infty \quad.
\end{equation}
Then
\begin{equation}\label{tka-gen-2}
  \mathcal{E}(P)\leq 2^m C_w\Vol_{w}(P)\quad.
\end{equation}
 \end{corollary}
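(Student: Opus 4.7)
The plan is to apply Theorem~\ref{flat-ends} with a fixed inner radius $R$ and let the outer radius $t$ tend to infinity, and then upgrade from the local end-count $\mathcal{E}_{D_R}(P)$ to the global count $\mathcal{E}(P)$ by exhaustion.

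Concretely, I would first fix $R>R_0$ and, for every $t>R$, use Theorem~\ref{flat-ends} to write
\begin{equation*}
\mathcal{E}_{D_R}(P)\leq \left(\frac{2}{1-\frac{R}{t}}\right)^{m}\left(\frac{\int_0^{t}w(s)^{m-1}\,ds}{t^{m}/m}\right)\frac{\Vol(D_t)}{\Vol(B^{w}_{t})}.
\end{equation*}
Since the left-hand side is a nonnegative integer independent of $t$, the inequality is preserved under $\limsup_{t\to\infty}$ on the right. The prefactor $(2/(1-R/t))^{m}$ tends to $2^{m}$, the integral quotient has $\limsup$ equal to $C_w$ by (\ref{ine-ends}), and the volume quotient converges to $\Vol_w(P)$ by hypothesis. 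Because all three factors are nonnegative and two of them are honest limits, the $\limsup$ of the product is exactly the product $2^{m}C_w\Vol_w(P)$, hence
\begin{equation*}
\mathcal{E}_{D_R}(P)\leq 2^{m}\,C_w\,\Vol_w(P)\qquad\text{for every }R>R_0.
\end{equation*}

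The second stage is to let $R$ vary. Properness of $\varphi$ says exactly that each $D_R$ is a compact subset of $P$ and that every compact $K\subset P$ is contained in $D_R$ once $R$ is large enough. Since $K\mapsto\mathcal{E}_K(P)$ is monotone non-decreasing under enlargement of $K$, this exhaustion forces
\begin{equation*}
\mathcal{E}(P)=\sup_{K}\mathcal{E}_{K}(P)=\sup_{R>R_0}\mathcal{E}_{D_R}(P),
\end{equation*}
and combining with the previous display yields the desired bound $\mathcal{E}(P)\leq 2^{m}C_w\Vol_w(P)$.

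The only mildly nontrivial step is the last identification, that is, the fact that the supremum over all admissible compact sets can be computed along the family of extrinsic balls; this is a standard consequence of properness plus monotonicity of the end-count, but deserves to be recorded. Everything else is a routine manipulation of three nonnegative factors under $\limsup$ in the inequality supplied by Theorem~\ref{flat-ends}, so I do not expect any real obstacle beyond this bookkeeping.
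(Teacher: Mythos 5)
Your proposal is correct and follows essentially the same route as the paper: fix $R$, let $t\to\infty$ in Theorem \ref{flat-ends} to get the uniform bound $\mathcal{E}_{D_R}(P)\leq 2^m C_w \Vol_w(P)$, and then use monotonicity of $K\mapsto\mathcal{E}_K(P)$ together with properness to identify $\mathcal{E}(P)$ with $\sup_{R}\mathcal{E}_{D_R}(P)=\lim_{R\to\infty}\mathcal{E}_{D_R}(P)$. The only cosmetic difference is in the step you flag as "deserving to be recorded": the paper justifies the monotonicity of the end-count via the maximum principle for the immersion (since $\Delta^P (F\circ r)>0$, no component of $P\setminus D_R$ has compact closure), whereas you invoke it as a standard topological fact.
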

 \begin{remark}
 If we choose $w(t)=w_0(t)=t$, the model space becomes $\erre^m$, which is balanced from below, and the hypothesis of theorem \ref{flat-ends} are therefore  automatically fulfilled for any complete minimal submanifold properly immersed in a Cartan-Hadamard ambient manifold.    Inequality (\ref{resul1}) becomes
 \begin{equation}
 \mathcal{E}_{D_R}(P)\leq \left(\frac{2}{1-\frac{R}{t}} \right)^m\frac{\Vol(D_t)}{V_mt^m} \quad,
  \end{equation}
  For any $R>0$ and any $t>R$, being $V_m$ the volume of a geodesic ball of radius $1$ in $\erre^m$. From inequality (\ref{ine-ends}) we get
  \begin{equation}
  C_{w_0}=1\quad.
  \end{equation}
  Thus inequality (\ref{tka-gen-2}) becomes
  \begin{equation}\label{tka-c-h}
  \mathcal{E}(P)\leq 2^m\lim_{t\to\infty}\frac{\Vol(D_t)}{V_mt^m} \quad,
  \end{equation}
  which is the original inequality obtained by Tkachev (inequality (\ref{tka-ine})), but now inequality (\ref{tka-c-h}) is valid for any minimal submanifold properly immersed in a Cartan-Hadamard ambient manifold with finite volume growth.
  \end{remark}

  In \cite{GPGap,ChSubv} are also obtained  lower bounds for the number of ends, but we note that those lower bounds seem to need stronger assumptions: Dimension greater than $2$, or embeddedness of the ends and codimension $1$, decay on the second fundamental form, and a rotationally symmetric ambient manifold. As a counterpart, those lower bounds are associated to the so-called gap type theorems.

  Combining the results of \cite[Theorem 3.5]{GPGap} and corollary \ref{col-ends}, and taking into account the role of sectional curvatures of the model space (see \cite[Proposition 2.6]{GPGap}) we have
  
\begin{corollary}Let $\varphi:P^m \to M^n_w$ be a minimal and proper immersion into a balanced from below model space $M^n_w$ with increasing warping function $w$ satisfying the following conditions:
  \begin{equation}
  \begin{aligned}
& \limsup_{t\to\infty} \left(\frac{\int_0^tw(s)^{m-1}ds}{t^m/m}\right)=C_w<\infty \quad,\\
&\text{ there exist }R_0 \text{ such that for any }R>R_0\\
  &
  \begin{cases}
    -\frac{w''(R)}{w(R)} \leq 0\\
  \frac{1-\left(w'(R)\right)^2}{w(R)^2}\leq 0
  \end{cases}
  \end{aligned}
   \end{equation}

Suppose moreover that $m>2$, that the center  $o_w$ of $M_w^n$  satisfies  $\varphi^{-1}(o_w)\neq \emptyset$  and that the norm of the second fundamental form $\Vert B^P\Vert $  of the immersion is bounded for large $r$ by
\begin{equation}
\Vert B^P\Vert \leq \frac{\epsilon(r)}{w'(r)w(r)}\quad,
 \end{equation}
where $\epsilon$ is a positive function such that $\epsilon(r)\to 0$ when $r\to \infty$.

  Then the number of ends is bounded from below and from above by
\begin{equation}
  \Vol_{w}(P)\leq  \mathcal{E}(P)\leq 2^m C_w\Vol_{w}(P)\quad.
\end{equation}
   \end{corollary}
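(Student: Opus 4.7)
My plan is to derive the corollary by combining Corollary \ref{col-ends} (for the upper bound) with \cite[Theorem 3.5]{GPGap} (for the lower bound), so the task reduces to a hypothesis check. For the upper bound, I take the ambient manifold of Theorem \ref{flat-ends} to be $N^n = M_w^n$ itself. Then the sectional curvature comparison $K_N(p) \leq K_{M_w^n}(r(p))$ is trivially satisfied. The hypothesis that $w$ is increasing provides $w' > 0$, while $-w''(R)/w(R) \leq 0$ for $R > R_0$ is exactly the requirement $K_{M_w^n}(R) \leq 0$ outside a compact set. The assumption $C_w < \infty$ is carried over verbatim, and finite volume growth $\Vol_w(P) < \infty$ is implicit (otherwise the claimed bound is vacuous). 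Corollary \ref{col-ends} thus applies and yields $\mathcal{E}(P) \leq 2^m C_w \Vol_w(P)$.

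For the lower bound I would invoke \cite[Theorem 3.5]{GPGap}. Its explicit hypotheses transfer immediately from the statement: the ambient $M_w^n$ is rotationally symmetric, $m > 2$, $\varphi^{-1}(o_w) \neq \emptyset$, and the second fundamental form decays as $\Vert B^P \Vert \leq \epsilon(r)/(w(r) w'(r))$ with $\epsilon(r) \to 0$. The remaining curvature hypothesis in \cite[Theorem 3.5]{GPGap} — nonpositivity of the sectional curvatures of $M_w^n$ for large $r$ — is, via \cite[Proposition 2.6]{GPGap}, the pair $-w''(R)/w(R) \leq 0$ and $(1 - w'(R)^2)/w(R)^2 \leq 0$ for $R > R_0$. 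These two inequalities are precisely the radial and tangential sectional curvatures of a model space, and they are exactly what the corollary postulates. Consequently \cite[Theorem 3.5]{GPGap} delivers $\Vol_w(P) \leq \mathcal{E}(P)$, and the two bounds combine to the stated two-sided estimate.

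The main subtlety I anticipate lies not in the combination itself but in the translation between the curvature conditions expressed in terms of $w$ and $w'$ in the statement of the corollary and the sectional curvature conditions used internally in \cite[Theorem 3.5]{GPGap}; this is exactly the content of \cite[Proposition 2.6]{GPGap}, and unwinding it carefully is essential to make the citation unambiguous. One should also confirm that the finite volume growth $\Vol_w(P) < \infty$ tacitly needed for Corollary \ref{col-ends} to be informative is either made explicit or automatically follows from the hypotheses imported from \cite{GPGap}; if not, it should be added as a standing assumption.
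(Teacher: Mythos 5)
Your overall strategy---upper bound from Corollary \ref{col-ends} with $N=M_w^n$, lower bound from \cite[Theorem 3.5]{GPGap}, with \cite[Proposition 2.6]{GPGap} translating the curvature conditions---is exactly the combination the paper itself invokes (the paper offers no proof beyond this). However, your hypothesis check for the upper bound has a genuine gap: the claim that $K_N(p)\leq K_{M_w^n}(r(p))$ is \emph{trivially} satisfied when $N=M_w^n$ is not correct. Theorem \ref{flat-ends} bounds \emph{all} sectional curvatures of $N$ by the \emph{radial} curvature $-w''/w$ of the model, whereas in $M_w^n$ only planes containing $\nabla r$ have curvature $-w''(r)/w(r)$; planes tangent to the distance spheres have curvature $\bigl(1-(w'(r))^2\bigr)/w(r)^2$, and a general plane carries a convex combination of the two. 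Nothing in the corollary's hypotheses forces $\bigl(1-(w')^2\bigr)/w^2\leq -w''/w$: for $r\leq R_0$ there is no assumption at all, and for $r>R_0$ both quantities are merely nonpositive, which does not decide their relative order. So Corollary \ref{col-ends} cannot be cited as a black box with $N=M_w^n$.

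The repair---and this is what the paper's phrase about ``the role of sectional curvatures of the model space'' is pointing to---is to run the \emph{proof} of Theorem \ref{flat-ends} rather than its statement. That proof uses the curvature hypothesis only in two places: (i) through the $o$-radial comparison $K_{o,N}\leq -w''/w$ needed for Proposition \ref{lap-comp} and Theorem \ref{isoperimetric} centered at $o_w$, which for $N=M_w^n$ holds with equality; and (ii) through nonpositivity of \emph{all} sectional curvatures at the points of the re-centered balls $B^N_{(t-R)/2}(\varphi(o'))$, all of whose points satisfy $r>R>R_0$---and this is exactly what the assumed pair of inequalities delivers, since by \cite[Proposition 2.6]{GPGap} those two expressions are the radial and tangential curvatures of $M_w^n$ and every sectional curvature of a model space is a convex combination of them. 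With that substitution your argument closes; the lower-bound half via \cite[Theorem 3.5]{GPGap} and your remark that finite $w$-volume must either be assumed or extracted from the hypotheses of \cite{GPGap} are fine as stated.
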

 
\begin{figure}\label{figure}
\begin{center}
\includegraphics[width=25mm]{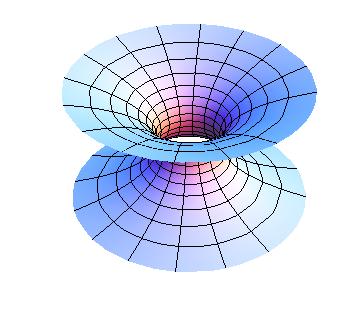}\quad\includegraphics[width=15mm]{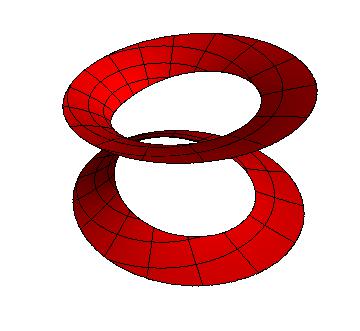}\quad \includegraphics[width=25mm]{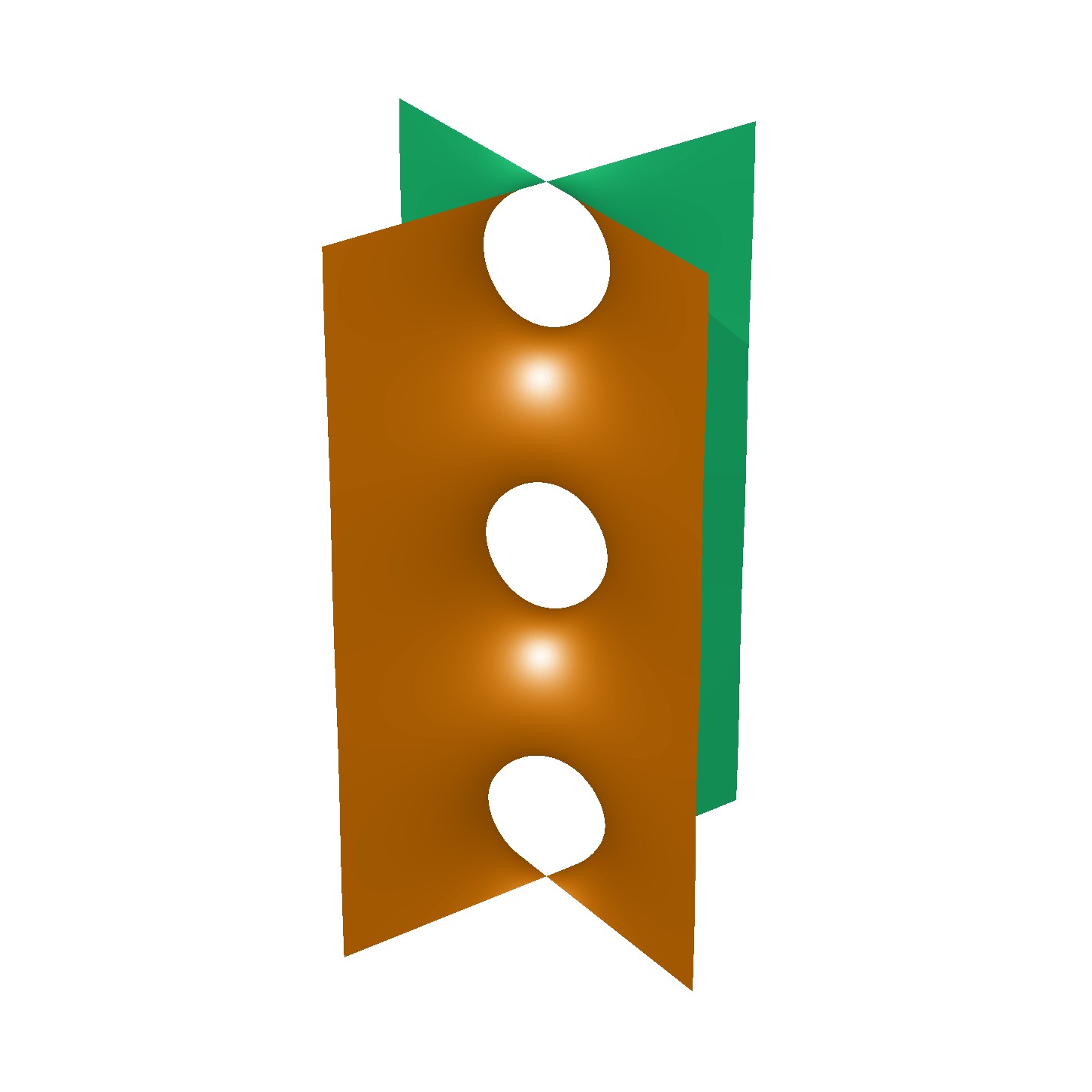} \quad \includegraphics[width=25mm]{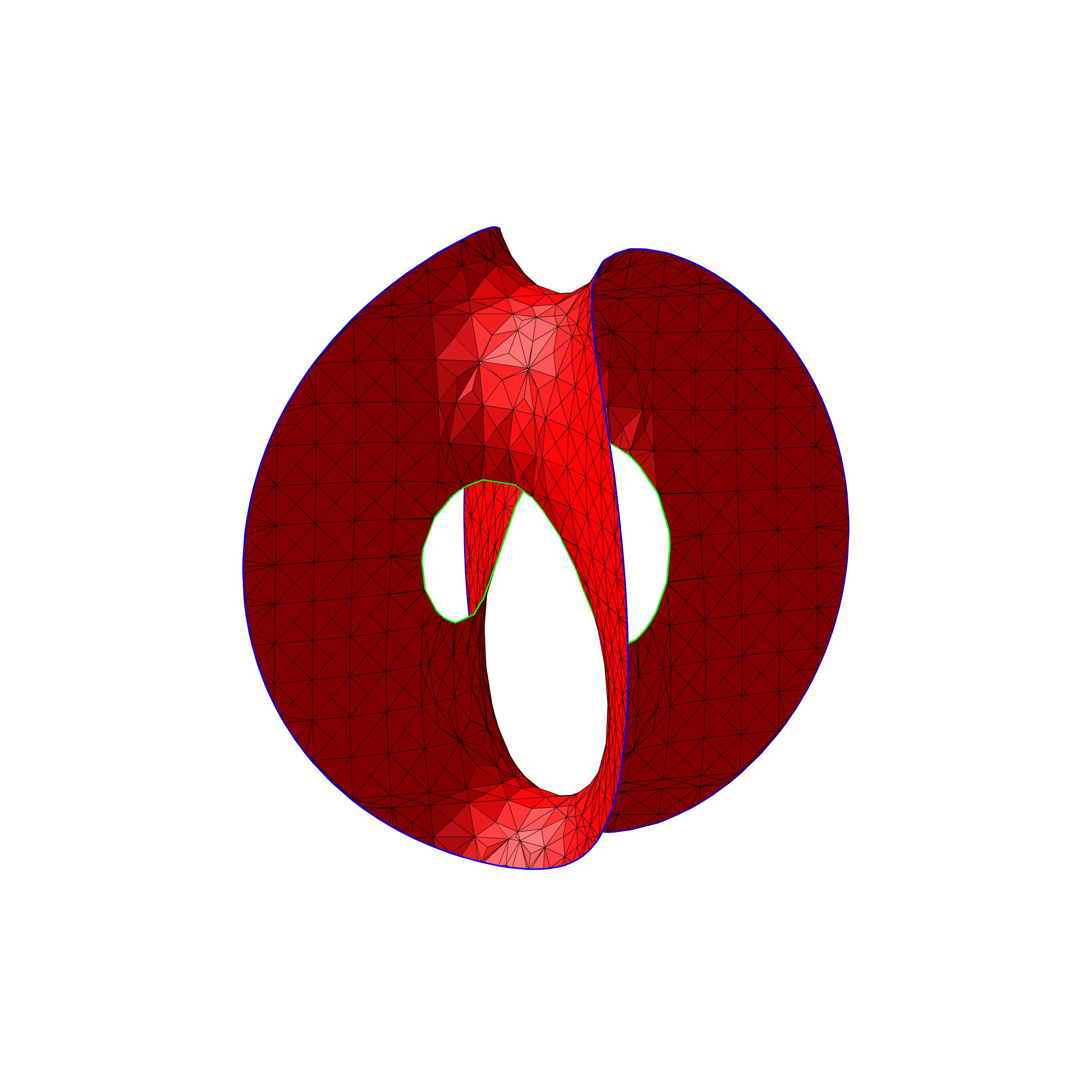}
\end{center}
\caption{Two examples of extrinsic annuli in $\erre^3$: A catenoid on the left  and the singly periodic Scherk surface on the right. The extrinsic annuli are constructed by cutting the surfaces with two spheres (with the same center but of  different radii) in the ambient manifold ($\erre^3$). The catenoid has two ends and finite total curvature. Hence, by theorem \ref{intro-theo-2}, the capacity of the extrinsic annulus of the catenoid is greater than the capacity of the corresponding annulus of the Euclidean $2$-plane but is smaller than two times that capacity.  The same is true for the extrinsic annulus of the singly periodic Scherk surface (we refer the reader to the introduction of \cite{Meeks2007} for the area growth of the singly periodic Scherk surface). }
 \end{figure}

By using our results about the behavior of the comparison quotients we can also estimate the capacity of an extrinsic annulus $\text{A}_{\rho, R}=P\cap \left(B_R^N(o)\setminus B_\rho^N(o)\right)$ (see figure \ref{figure} and, \S \ref{Results} and \S \ref{Prelim} for a precise definition of \emph{capacity} and \emph{extrinsic annulus}):
  \begin{theorem}\label{intro-theo-2}
  Let $\varphi: P^m\to \erre^n$ denote a complete and proper minimal immersion into the Euclidean space $\erre^n$. Then, for any $R>\rho>0$, the capacity of the extrinsic annulus $A_{\rho,R}$ is bounded by
  \begin{equation}\label{ine-intr-theo-2}
  \frac{\Vol(D_\rho)}{V_m \rho^m}\leq \frac{\C(A_{\rho,R})}{\C(A^{\erre^m}_{\rho,R})}\leq  \frac{\Vol(D_R)}{V_m R^m}\quad,
   \end{equation}
  where $\C(A^{\erre^m}_{\rho,R})$ is the capacity of the geodesic annulus $A^{\erre^m}_{\rho,R}$ in $\erre^m$ of inner radius $\rho$ and outer radius $R$.
  \end{theorem}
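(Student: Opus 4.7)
The plan is to compress both inequalities into a single Green-identity computation on $A_{\rho,R}$, pairing the equilibrium potential $u^{*}$ of the capacitor with the model radial function $f(r)=r^{2-m}$ (for $m>2$; the borderline case $m=2$ uses $\log r$ and is structurally identical).

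The argument rests on two minimality identities. Since $\varphi:P^m\to\erre^n$ is minimal, the position vector is harmonic on $P$, which yields $\Delta^{P} r^{2}=2m$, whence
\[
 r\,\Delta^{P} r \;=\; m - |\nabla^{P} r|^{2}, \qquad \int_{\partial D_{s}}|\nabla^{P} r|\,d\sigma \;=\; \frac{m\,\Vol(D_{s})}{s},
\]
the second by applying the divergence theorem to $r^{2}$ on $D_{s}$. A direct chain-rule computation then gives
\[
 \Delta^{P}\!\bigl(r^{2-m}\bigr) \;=\; m(2-m)\,r^{-m}\bigl(1-|\nabla^{P} r|^{2}\bigr),
\]
which has a definite sign because $|\nabla^{P} r|\leq 1$, so that $r^{2-m}$ is superharmonic on $P$ when $m>2$.

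Now let $u^{*}$ be the harmonic function on $A_{\rho,R}$ with boundary values $1$ on $\partial D_{\rho}$ and $0$ on $\partial D_{R}$; its Dirichlet energy is $\C(A_{\rho,R})$ and the integrated normal derivative of $u^{*}$ across every extrinsic sphere separating the two boundaries is precisely $\C(A_{\rho,R})$. I then apply Green's identity to the pair $(u^{*},\, r^{2-m})$: the boundary contributions involving $\partial_{\nu}r^{2-m}$ are evaluated by the flux formula and become multiples of $\Vol(D_{\rho})/\rho^{m}$ and $\Vol(D_{R})/R^{m}$, while those involving $\partial_{\nu}u^{*}$ produce multiples of $\C(A_{\rho,R})$. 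After collecting and dividing by $(2-m)$, the computation collapses to the clean identity
\[
 m\int_{A_{\rho,R}} u^{*}\, r^{-m}\bigl(1-|\nabla^{P} r|^{2}\bigr)\, dV \;=\; \C(A_{\rho,R})\int_{\rho}^{R} s^{1-m}\,ds \;-\; m\,\frac{\Vol(D_{\rho})}{\rho^{m}}.
\]
Recognising that $\C(A^{\erre^{m}}_{\rho,R}) = m V_{m}\!\left/\!\int_{\rho}^{R}s^{1-m}ds\right.$, this single equality encodes both estimates once one sandwiches $u^{*}\in[0,1]$.

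The lower bound then follows by dropping the left-hand side to zero, since its integrand is non-negative. For the upper bound I would apply the maximum principle $u^{*}\leq 1$ and evaluate the resulting integral $\int_{A_{\rho,R}} r^{-m}(1-|\nabla^{P} r|^{2})\,dV$ in closed form by integrating $\Delta^{P} r^{2-m}$ against $1$; a second application of the divergence theorem together with the flux formula gives the value $\Vol(D_{R})/R^{m} - \Vol(D_{\rho})/\rho^{m}$, and substituting back telescopes the $\Vol(D_{\rho})/\rho^{m}$ terms to produce the desired bound. The main technical nuisance is bookkeeping the signs and the two outward normals $\mp \nabla^{P}r/|\nabla^{P}r|$ on the inner and outer boundaries of $A_{\rho,R}$; otherwise the calculation is elementary.
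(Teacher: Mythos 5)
Your proof is correct, and it takes a genuinely different route from the paper's. The paper derives Theorem \ref{intro-theo-2} by specializing two general comparison theorems to $\erre^n$: the lower bound is Theorem \ref{capacity-baix} (transplant the model capacity potential $\Psi^w_{\rho,R}(r)$, show it is subharmonic via the Laplacian comparison of Proposition \ref{lap-comp}, compare it to the true potential by the maximum principle, and integrate normal derivatives over $\partial D_\rho$), the upper bound is Theorem \ref{capacity} (Grigor'yan's flux--capacity inequality (\ref{capa-gry}) plus the monotonicity of the flux quotient from Theorem \ref{isoperimetric}), and finally the flux quotients are traded for volume quotients via $\Delta^P r^2=2m$ (equation (\ref{equal-flux-vol})). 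You instead extract both bounds from a single Green identity pairing the equilibrium potential $u^*$ with the explicit Euclidean comparison potential $r^{2-m}$: your Laplacian formula $\Delta^P\left(r^{2-m}\right)=m(2-m)r^{-m}\left(1-\vert\nabla^P r\vert^2\right)$, your key integral identity, and the closed-form evaluation $\int_{A_{\rho,R}} r^{-m}\left(1-\vert\nabla^P r\vert^2\right)d\mu=\Vol(D_R)/R^m-\Vol(D_\rho)/\rho^m$ all check out, and the two inequalities do follow from $0\le u^*\le 1$ exactly as you say. What your route buys: it is shorter, self-contained, avoids the mean-exit-time and maximum-principle machinery and Grigor'yan's inequality entirely, and it reproves the monotonicity of $\Vol(D_s)/(V_m s^m)$ as a by-product. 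What it gives up: it is intrinsically Euclidean, since it hinges on the exact identity $\Delta^P r^2=2m$ and on $r^{2-m}$ being the precise model potential; the paper's two-theorem decomposition works for any ambient manifold with a pole whose radial curvatures are bounded above by a balanced model, of which Theorem \ref{intro-theo-2} is only the simplest instance. Two technicalities you should still acknowledge (the paper shares them): boundary integrals over $\partial D_\rho$ and $\partial D_R$ should be taken at regular values of $r$ (Sard's theorem), and the $m=2$ case via $\log r$, while indeed structurally identical, deserves to be written out since $V_2\pi^{-1}$-type constants and signs change.
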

  
  \begin{remark}
  Since, from Theorem \ref{isoperimetric}, the quotient $\frac{\Vol(D_s)}{V_m s^m}$ is a non-decreasing function on $s$, we can state Theorem \ref{intro-theo-2} in the limit case ( $\rho\to 0$ and $R\to\infty$) and inequality (\ref{ine-intr-theo-2}) there becomes
  \begin{equation}
  1\leq \frac{\C(A_{\rho,R})}{\C(A^{\erre^m}_{\rho,R})}\leq \lim_{R\to\infty}\frac{\Vol(D_R)}{V_m R^m}= V_{w_0}(P)\quad.
   \end{equation}
When we deal with  a minimal surface $\Sigma\subset\erre^3$ which is properly \emph{embedded} into the Euclidean space $\erre^3$ the limit
\begin{equation}\label{two-d}
V_{w_0}(\Sigma)=\lim_{R\to\infty}\frac{\Vol(\Sigma\cap B_R^{\erre^3}(o))}{\pi R^2}
\end{equation}
is well understood. For instance, the above limit corresponds to the number of ends if the surface $\Sigma$ has finite total curvature.
  \end{remark}
  
  \begin{remark}
  In order to bound the capacity quotient, our theorems do not make use of the volume quotient as in Theorem \ref{intro-theo-2}, but instead they make use of the flux quotient (see Theorems \ref{capacity-baix} and \ref{capacity}). In the special case when the ambient manifold is $\erre^n$ (such as in Theorem \ref{intro-theo-2}) the volume quotient agrees however with the flux quotient (see equation (\ref{equal-flux-vol}) and theorem \ref{volume-flux-theo}).
  \end{remark}
  
   \subsection{Outline of the paper}
   In \S \ref{Results} we show our main theorems concerning the flux quotients, the volume quotients, and the capacity quotients. In \S \ref{Prelim} we state the preliminary concepts in order to prove  the main theorems of \S \ref{Results} in \S \ref{proof-main}. This allows us then to prove Theorem \ref{flat-ends} and Corollary \ref{col-ends} in \S\ref{Proof-into}. Finally, in \S \ref{corollaries}, we present several corollaries and examples of applications of the extrinsic theory and results which have been established in \S  \ref{Results}.

\section{Extrinsic theory: Flux, Capacity and Volume comparison for extrinsic balls }\label{Results}
Let $(M^n,g)$ be a Riemannian  manifold. For any oriented
hypersurface $\Sigma\subset M$ with unit normal vector field $\nu$, we define the flux $F_X(\Sigma)$ of the vector field $X$ through $\Sigma$ by
\begin{equation}\label{flux}
F_X(\Sigma):=\int_\Sigma\langle X, \nu\rangle d\mu_\Sigma\quad ,
\end{equation}
where $d\mu_\Sigma$ is the associated Riemannian density determined by the metric $g_\Sigma=i^*g$ (being $i:\Sigma \to M$ the inclusion map).

By the divergence theorem (see \cite{Chavel2} for instance), if one has an oriented domain $\Omega$ in $M$ with smooth boundary $\partial \Omega$, and the vector field $X$ is $C^1$ in $\overline \Omega$ and with compact support in $\overline \Omega$, the flux of $X$ through $\partial \Omega$ is related to the divergence of $X$ by
\begin{equation}\label{divergence}
\int_\Omega\text{div} X d\mu=\int_{\partial \Omega}\langle X,
\nu\rangle d\mu_{\partial \Omega}=F_X(\partial \Omega)\quad .
\end{equation}

Given a smooth function $u:M\to \erre$, we can also define the flux of a function $u$, but then the flux $J_u(t)$ is the flux of the gradient $\nabla u$ (i.e. the metric dual vector to $du$, $du(X)=\langle \nabla u, X\rangle$)  through  the level set $\Sigma^u_t:=\{x\in M\,\vert\, u(x)=t\}$ so that:

\begin{equation}
J_u(t):=F_{\nabla u}(\Sigma_t^u)\quad .
\end{equation}

Taking into account that the unit normal vector field $\nu$ of $\Sigma^u_t$ is $\nu=\frac{\nabla u}{\vert \nabla u \vert}$, it is easy to see that 

\begin{equation}\label{flux-function}
J_u(t)=\int_{\Sigma_t}\vert \nabla u \vert d\mu_{\Sigma_t^u}\quad.
\end{equation}

Observe moreover, that by the Sard theorem and by the regular set theorem we need no further restrictions on the smoothness of $\Sigma_t^u$ and on the smoothness of the unit normal vector field $\nu$.

The overall goal of this work is to characterize the \emph{isoperimetric inequalities for extrinsic balls}, and the \emph{capacity of minimal submanifolds} in terms of the flux of extrinsic distance functions.  Actually we are interested on the flux of the extrinsic distance function on minimal submanifolds in an ambient manifold $N$ which possesses
a pole and has the radial curvatures bounded form above by the
radial curvatures of rotationally symmetric model space $K_N\leq
K_{M_w^n}=-\frac{w''}{w}$, see \cite{MP2} or section \ref{Prelim}
of this paper for precise definitions. It is the behavior of this particular flux that allows us  to study the mean exit time function, the capacity, the conformal type, the fundamental tone, and in special cases also the number of ends of the submanifold.

\subsection{Flux and volume comparison: isoperimetric inequalities
  and the mean exit time function}

Given an isometric immersion $\varphi:P\to (N,o)$ into a manifold with
a pole $o\in N$, the flux $J_r$ of the
extrinsic distance function $r_o$ ({\it i.e.}, the restriction by the
immersion of the ambient distance function to the submanifold) is given by
$$
J_r(R)=\int_{\partial D_R}\vert \nabla^P r_o\vert d\mu\quad ,
$$
where $\partial D_R$ is the level set $\partial D_R=r_o^{-1}(R)$, and therefore, $D_R=r_o^{-1}([0,R))$ is the extrinsic ball of radius $R$.

When the immersion is minimal and the ambient  manifold has its  radial sectional curvatures $K_N$ bounded
from above by the radial sectional curvatures of a
rotationally symmetric model space $M_w^n$ that is balanced from below
(see \cite{MP2} and section \ref{Prelim}), $K_N\leq K_{M_w^n}$, we can compare the
volume quotient $\frac{\Vol(D_R)}{\Vol(B_R^w)}$ and the flux quotient $\frac{J_r(R)}{J_r^w(R)}$ . The volume quotient  is the quotient between the volume of a extrinsic ball $D_R$ of radius $R$ in $P^m$ and
the volume of a geodesic ball $B_R^w$ of the same radius $R$ in $M_w^m$. The flux quotient is the
quotient   between the flux of the extrinsic distance in $P^m$ and the
flux of the geodesic distance in $M_w ^m$.  These two quotients are related by the following theorem

\begin{theorem}\label{isoperimetric}
Let $\varphi: P^m \longrightarrow N^n$ be an isometric, proper, and minimal immersion of a complete non-compact Riemannian $m$-manifold $P^m$ into a complete Riemannian manifold $N^n$ with a pole $o\in N$ .  Let us suppose that the $o-$radial sectional curvatures of $N$ are bounded from above by
$$K_{o,N}(\sigma_{x}) \leq  -\frac{w''(r)}{w(r)}(\varphi(x))\,\,\,\forall x \in P\, ,$$ and the model space $M_w^m$ is balanced from bellow. Then

\begin{enumerate}
\item $J_r(R)$ is related with $\Vol(D_R)$ by
\begin{equation}\label{equ1}
 \frac{\Vol(D_R)}{\Vol(B_R^w)}\leq \frac{J_r(R)}{J_r^w(R)}.
\end{equation}

\item The functions $\frac{\Vol(D_R)}{\Vol(B_R^w)}$ and  $\frac{J_r(R)}{J_r^w(R)}$ are non decreasing functions on $R$.

\item Denoting by $E_R^P(x)$ the mean time for the first exit from the extrinsic ball $D_R(o)$ for a Brownnian particle starting at $o\in P^m$,  and denoting by $E_R^w$  the mean exit time function for the $R-$ball $B_R^w$ in the model space $M_w^m$, if equality holds in (\ref{equ1}) for some fixed radius $R>0$, then for any $x\in D_R$,  $E_R^P(x)=E_R^w(r(x))$, where  $r(x)$ the extrinsic distance from $o$ to the point $x\in P$.
\end {enumerate}
\end{theorem}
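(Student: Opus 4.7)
The plan is to transplant a model radial function $\Psi$ to $P$ via the extrinsic distance and then apply the divergence theorem. Let $\Psi$ be the radial function on $M_w^m$ with $\Psi'(0)=0$ and $\Delta^{M_w^m}\Psi=1$, so that explicitly $\Psi'(r)=w(r)^{1-m}\int_0^r w(s)^{m-1}\,ds$ and $\Psi'(R)=\Vol(B_R^w)/J_r^w(R)$. Set $\tilde\Psi(x):=\Psi(r(x))$ on $P$. The central pointwise estimate to establish is $\Delta^P\tilde\Psi\geq 1$. This will follow by combining the Hessian comparison for $r$ in $N$ (forced by $K_N\leq K_{M_w^m}$) with the minimality of $\varphi$, which together give $\Delta^P r\geq (w'/w)(m-\lvert\nabla^P r\rvert^2)$; substituting this into $\Delta^P\tilde\Psi=\Psi''(r)\lvert\nabla^P r\rvert^2+\Psi'(r)\Delta^P r$ and eliminating $\Psi''$ via $\Psi''+(m-1)(w'/w)\Psi'=1$ yields, after collecting terms,
\begin{equation*}
\Delta^P\tilde\Psi\geq 1+\bigl[m\,\tfrac{w'(r)}{w(r)}\Psi'(r)-1\bigr]\bigl(1-\lvert\nabla^P r\rvert^2\bigr),
\end{equation*}
where the bracketed factor is non-negative precisely because $M_w^m$ is balanced from below. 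Integrating over $D_R$ and applying the divergence theorem then yields $\Vol(D_R)\leq\Psi'(R)J_r(R)=\Vol(B_R^w)J_r(R)/J_r^w(R)$, which rearranges to (\ref{equ1}).

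For the monotonicity statements in (2), I would test the vector field $X:=w(r)^{1-m}\nabla^P r$ on the annular extrinsic region $D_R\setminus D_\rho$. A direct computation using the Laplacian comparison above gives $\Div^P X\geq m\,w(r)^{-m}w'(r)(1-\lvert\nabla^P r\rvert^2)\geq 0$, so the divergence theorem on $D_R\setminus D_\rho$ immediately yields $J_r(R)/w(R)^{m-1}\geq J_r(\rho)/w(\rho)^{m-1}$, i.e.\ $J_r/J_r^w$ is non-decreasing. For the volume quotient, the coarea formula combined with $\lvert\nabla^P r\rvert\leq 1$ gives $(d/dR)\Vol(D_R)=\int_{\partial D_R}\lvert\nabla^P r\rvert^{-1}d\mu\geq J_r(R)$, so (\ref{equ1}) implies $\Vol'(D_R)/\Vol(D_R)\geq 1/\Psi'(R)=\Vol'(B_R^w)/\Vol(B_R^w)$, which is the required monotonicity.

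For the equality case (3), introduce $u(x):=\Psi(R)-\Psi(r(x))$ on $D_R$, which vanishes on $\partial D_R$ and satisfies $\Delta^P u=-\Delta^P\tilde\Psi\leq -1=\Delta^P E_R^P$. The maximum principle applied to $u-E_R^P$ gives $u\geq E_R^P$ throughout $D_R$. If equality holds in (\ref{equ1}) at some $R$, then the derivation of part (1) forces $\int_{D_R}(\Delta^P\tilde\Psi-1)\,d\mu=0$; since the integrand is non-negative, $\Delta^P\tilde\Psi\equiv 1$ on $D_R$, so $u-E_R^P$ is harmonic with zero boundary values and hence identically zero, giving $E_R^P(x)=E_R^w(r(x))$ pointwise. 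The main technical obstacle throughout is the sharp bound $\Delta^P\tilde\Psi\geq 1$: controlling the residual $(1-\lvert\nabla^P r\rvert^2)$ factor is exactly the role of the balanced-from-below hypothesis, and without it the whole chain of inequalities collapses.
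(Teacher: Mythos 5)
Your proposal is correct and follows essentially the same route as the paper: your transplanted function $\tilde\Psi$ is (up to sign and an additive constant) the paper's transplanted mean exit time function $E_R^w(r)$, your vector field $X=w(r)^{1-m}\nabla^P r$ is a constant multiple of the paper's $\nabla^P E_R^w(r)/\Vol(B_r^w)$, and the coarea and maximum-principle arguments for parts (2) and (3) are identical. The only difference is that you derive the key pointwise inequality $\Delta^P\tilde\Psi\geq 1$ (equivalently $\Delta^P E_R^w\leq -1$) directly from the Hessian comparison plus the balanced-from-below condition, whereas the paper cites it as a proposition from \cite{MP2}.
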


\subsection{Capacity and flux comparison: conformal type}

Given a compact set $F\subset M$ in a Riemannian manifold $M$ and an open set $G\subset M$ containing $F$, we call the couple $(F,G)$ a \emph{capacitor}. Each capacitor then has its capacity defined by

\begin{equation}
\C(F,G):=\inf_u\int_{G\setminus F}\Vert \nabla u \Vert
d\mu\quad ,
\end{equation}
where the $\inf$ is taken over all Lipschitz functions $u$ with compact support in $G$ such that $u=1$ on $F$.

When $G$ is precompact, the infimum is attained for the function $u=\Psi$ which is the solution of the following Dirichlet problem in $G-F$:
\begin{equation}\label{Dir-capa}
\begin{cases}
\Delta \Psi=0\\
\Psi\vert_{\partial F}=0\\
\Psi\vert_{\partial G}=1
\end{cases}
\end{equation}

From a physical point of view, the capacity of the capacitor $(F,G)$ represents the total electric charge (generated by the electrostatic potential $\Psi$) flowing into the domain $G-F$ through the interior boundary $\partial F$.
Since the total current stems from a potential difference of $1$ between $\partial F$ and $\partial G$, we get from Ohm's Law that the effective resistance of the domain $G-F$ is
\begin{equation}
R_{\text{eff}}(G-F)=\frac{1}{\C(F,G)}\quad .
\end{equation}

The exact value of the capacity of a set is known only in a few cases, and so its estimation in geometrical terms is of great interest, not only in electrostatic, but in many physical descriptions of flows, fluids, heat, or generally where the Laplace operator plays a key role, see \cite{conj-ps,HPR}.

Given a capacitor  $(F,G)$, if we have a smooth function $u$ with $u=a$
on $\partial F$ and $u=b$ on $\partial G$. The capacity and the flux
are then related by     (see \cite{GriCap}):
\begin{equation}\label{capa-gry}
\C(F,G)\leq \left(\int_a^b \frac{ds}{J_u(s)}\right)^{-1}\quad .
\end{equation}

In this paper we are interested on the $o$-centered \emph{extrinsic annulus} $A_{\rho,R}(o)\subset P^m$
for $0<\rho<R$  given by
\begin{equation}
A_{\rho,R}(o):=D_R(o)-D_\rho(o)\quad.
\end{equation}
To be more precise, we are interested on the behavior of the flux and the capacity of those extrinsic domains. In the following theorems we provide upper and lower bounds for the capacity quotient in terms of the flux quotient.

\begin{theorem}\label{capacity-baix}Let $\varphi: P^m \longrightarrow N^n$ be an isometric, proper, and minimal immersion of a complete non-compact Riemannian $m$-manifold $P^m$ into a complete Riemannian manifold $N^n$ with a pole $o\in N$ and satisfying $\varphi^{-1}(o) \neq \emptyset$.  Let us suppose that the $o-$radial sectional curvatures of $N$ are bounded from above by
$$K_{o,N}(\sigma_{x}) \leq
-\frac{w''(r)}{w(r)}(\varphi(x))\,\,\,\forall x \in P \quad ,$$ and
the warping function $w$ satisfies
$$
w'\geq 0\quad .
$$
Then
\begin{equation}
\frac{J_r(\rho)}{J_r^w(\rho)} \leq
\frac{\C(A_{\rho,R})}{\C(A_{\rho,R}^w)} \quad ,
\end{equation}
where $A^w_{\rho,R}$ is the intrinsic annulus in $M_w^m$.
\end{theorem}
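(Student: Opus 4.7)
The plan is to use the radial harmonic capacity potential of the model annulus, pulled back to $P$ via the extrinsic distance $r$, as a subharmonic minorant of the capacity potential of $A_{\rho,R}$; comparing boundary flux then yields the inequality directly. Let $\Psi^w(r)$ denote the capacity potential of the model annulus $A^w_{\rho,R}$, which is radial and satisfies $(\Psi^w)''(r) + (m-1)\frac{w'(r)}{w(r)}(\Psi^w)'(r) = 0$ with $\Psi^w(\rho)=0$, $\Psi^w(R)=1$. A single integration gives $(\Psi^w)'(r)\,w(r)^{m-1} = c$ for a constant $c > 0$, so $(\Psi^w)' > 0$ throughout $[\rho,R]$; the divergence theorem in the model then reads $\C(A^w_{\rho,R}) = (\Psi^w)'(\rho)\, J_r^w(\rho)$.

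The key step is to show that $u(x) := \Psi^w(r(x))$ is subharmonic on $A_{\rho,R}$. Expanding,
\begin{equation*}
\Delta^{P} u = (\Psi^w)''(r)\,\vert\nabla^{P} r\vert^{2} + (\Psi^w)'(r)\,\Delta^{P} r.
\end{equation*}
Applying the Hessian comparison theorem (under $K_N \leq -w''/w$) together with the minimality of $\varphi$ yields $\Delta^{P} r \geq \frac{w'(r)}{w(r)}\bigl(m - \vert\nabla^{P} r\vert^{2}\bigr)$. Since $(\Psi^w)'>0$, substituting this bound and then the model ODE for $(\Psi^w)''$ produces
\begin{equation*}
\Delta^{P} u \;\geq\; m\,(\Psi^w)'(r)\,\frac{w'(r)}{w(r)}\,\bigl(1 - \vert\nabla^{P} r\vert^{2}\bigr) \;\geq\; 0,
\end{equation*}
where the nonnegativity uses the hypothesis $w'\geq 0$, the sign $(\Psi^w)'>0$, and $\vert\nabla^{P} r\vert \leq 1$.

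Since $\varphi$ is proper, $A_{\rho,R}\subset P$ is precompact. On each of its connected components, $u$ and $\Psi$ have identical boundary values ($0$ on $\partial D_\rho$, $1$ on $\partial D_R$), so the maximum principle applied to the subharmonic function $u - \Psi$ gives $u \leq \Psi$ throughout $A_{\rho,R}$. Because $u - \Psi$ vanishes on $\partial D_\rho$ with $u - \Psi \leq 0$ inside, a Hopf-type boundary argument yields $\partial_{n} u \leq \partial_{n}\Psi$ on $\partial D_\rho$, where $n$ is the unit normal pointing into $A_{\rho,R}$. Along $\partial D_\rho$ one has $n = \nabla^{P} r / \vert\nabla^{P} r\vert$ and $\nabla^{P} u = (\Psi^w)'(r)\,\nabla^{P} r$, hence $\partial_{n} u = (\Psi^w)'(\rho)\,\vert\nabla^{P} r\vert$. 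Integrating over $\partial D_\rho$ and using $\int_{\partial D_\rho} \partial_{n}\Psi\,d\mu = \C(A_{\rho,R})$ (from the divergence theorem applied to $\Delta^{P}\Psi = 0$) gives
\begin{equation*}
(\Psi^w)'(\rho)\,J_r(\rho) \;\leq\; \C(A_{\rho,R}),
\end{equation*}
and dividing by $\C(A^w_{\rho,R}) = (\Psi^w)'(\rho)\,J_r^w(\rho)$ delivers the stated inequality.

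The substantive geometric input is the subharmonicity of $u$: it is a clean algebraic consequence, but it hinges on both the curvature comparison and the minimality assumption — this is the only place where those hypotheses are used in an essential way. The remaining steps (maximum principle, boundary-point comparison, and the identification of $\C(A_{\rho,R})$ with the boundary flux of $\Psi$) are routine once the subharmonic comparison function is in hand.
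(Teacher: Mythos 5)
Your proof is correct and takes essentially the same route as the paper's: you transplant the model capacity potential $\Psi^w_{\rho,R}\circ r$ to $P$, prove it is subharmonic using the Hessian/Laplacian comparison together with minimality and $w'\geq 0$, then apply the maximum principle and compare normal derivatives on $\partial D_\rho$ to bound $\C(A_{\rho,R})$ from below by $(\Psi^w_{\rho,R})'(\rho)J_r(\rho)$. The only cosmetic difference is that you derive the subharmonicity bound with the factor $\left(1-\vert\nabla^P r\vert^2\right)$ directly from the Hessian comparison, whereas the paper invokes its Proposition \ref{lap-comp} and obtains the factor $\left(1-\vert\nabla^P r\vert\right)$; both are nonnegative and play the identical role.
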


\begin{theorem}\label{capacity}Let $\varphi: P^m \longrightarrow N^n$ be an isometric, proper, and minimal immersion of a complete non-compact Riemannian $m$-manifold $P^m$ into a complete Riemannian manifold $N^n$ with a pole $o\in N$ .  Let us suppose that the $o-$radial sectional curvatures of $N$ are bounded from above by
$$K_{o,N}(\sigma_{x}) \leq  -\frac{w''(r)}{w(r)}(\varphi(x))\,\,\,\forall x \in P\, ,$$ and the model space $M_w^m$ is balanced from bellow. Then

\begin{equation}\label{equ2}
\frac{\C(A_{\rho,R})}{\C(A_{\rho,R}^w)}\leq \frac{J_r(R)}{J_r^w(R)} \quad ,
\end{equation}
where $A^w_{\rho,R}$ is the intrinsic annulus in $M_w^m$. Moreover, if equality holds in (\ref{equ2}) for some fixed $R>0$, then $D_R$ is a minimal cone in $N^n$.
\end{theorem}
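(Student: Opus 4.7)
The plan is to combine the general capacitor bound (\ref{capa-gry}) with an exact computation of the model capacity and then invoke the monotonicity of the flux quotient granted by Theorem \ref{isoperimetric}.

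First, I would apply (\ref{capa-gry}) to the capacitor $(\overline{D_\rho},D_R)$ taking the extrinsic distance $u=r_o$ as test function (so $a=\rho$, $b=R$), obtaining
$$
\C(A_{\rho,R}) \leq \left(\int_\rho^R \frac{ds}{J_r(s)}\right)^{-1}.
$$
On the model side, rotational symmetry together with uniqueness of the Dirichlet problem (\ref{Dir-capa}) forces the capacitor potential of $A^w_{\rho,R}$ to be itself a function of the radius, so the bound (\ref{capa-gry}) with $u=r$ is saturated:
$$
\C(A^w_{\rho,R}) = \left(\int_\rho^R \frac{ds}{J^w_r(s)}\right)^{-1}.
$$
Dividing, the theorem reduces to the purely analytic inequality
$$
\frac{\int_\rho^R ds/J^w_r(s)}{\int_\rho^R ds/J_r(s)} \leq \frac{J_r(R)}{J^w_r(R)}.
$$

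To establish this inequality, I would invoke Theorem \ref{isoperimetric}: the ratio $s\mapsto J_r(s)/J^w_r(s)$ is non-decreasing on $[\rho,R]$, hence for every $s$ in that interval
$$
\frac{1}{J_r(s)} \geq \frac{J^w_r(R)}{J_r(R)}\cdot\frac{1}{J^w_r(s)}.
$$
Integrating from $\rho$ to $R$ and rearranging yields the required comparison, proving (\ref{equ2}).

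For the equality case, if equality holds in (\ref{equ2}) at some $R>0$, then both intermediate inequalities above must saturate: (i) equality in (\ref{capa-gry}) for $u=r_o$, which forces the capacity potential of $A_{\rho,R}$ to depend only on $r$; and (ii) $s \mapsto J_r(s)/J^w_r(s)$ is constant on $[\rho,R]$. Condition (ii), combined with the rigidity side of Theorem \ref{isoperimetric}, upgrades to equality in the volume comparison and, through the Laplacian/Hessian rigidity for the extrinsic distance on minimal submanifolds developed in \cite{MP2}, forces $\Hess r$ on $D_R$ to coincide with the model radial Hessian. The main obstacle is translating these two rigidities into the geometric conclusion that $D_R$ is a minimal cone: I would argue that each integral curve of $\nabla^P r$ inside $D_R$ must be a radial geodesic of $N$ issuing from $o$, so that $D_R$ is ruled by such rays and is therefore a cone over $\partial D_R$ with apex in $\varphi^{-1}(o)$.
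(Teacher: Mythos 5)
Your proof of the inequality (\ref{equ2}) itself is correct and is essentially the paper's own argument: both apply the flux bound (\ref{capa-gry}) with $u=r$, use the exact value $\C(A^w_{\rho,R})=\bigl(\int_\rho^R ds/\Vol(S^w_s)\bigr)^{-1}$ of the model capacity, and conclude via the monotonicity of $s\mapsto J_r(s)/J^w_r(s)$ supplied by Theorem \ref{isoperimetric}. Your extraction of the two equality conditions --- saturation of (\ref{capa-gry}) and constancy of the flux quotient on $[\rho,R]$ --- also matches the paper.

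The rigidity part, however, has a genuine gap. First, your inference that constancy of $J_r(s)/J^w_r(s)$ on $[\rho,R]$, ``combined with the rigidity side of Theorem \ref{isoperimetric}'', upgrades to equality in the volume comparison is unsupported: the rigidity clause of Theorem \ref{isoperimetric} applies when $\Vol(D_R)/\Vol(B^w_R)=J_r(R)/J^w_r(R)$ at some radius, and a constant flux quotient does not force this. The volume quotient also sees the part of $P$ inside $D_\rho$, about which the equality hypothesis says nothing, and the monotonicity inequality (\ref{monotonicity}) only bounds the growth of the volume quotient from below --- it permits that quotient to remain strictly below a constant flux quotient. Second, and more decisively, your final step --- that the integral curves of $\nabla^P r$ are radial geodesics of $N$, so that $D_R$ is ruled by rays --- requires precisely the pointwise identity $\Vert\nabla^P r\Vert\equiv 1$ on the annulus (only then is $\nabla^N r$ tangent to $P$), and you never derive it; you explicitly flag it as ``the main obstacle''. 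The paper closes exactly this gap with the divergence lemma from the proof of Theorem \ref{isoperimetric}: the field $X=\nabla^P E^w_R(r)/\Vol(B^w_r)$ satisfies pointwise $\Div X\leq\bigl(\Vert\nabla^P r\Vert^2-1\bigr)/\Vol(B^w_r)\leq 0$ by (\ref{nonpositivediv}), while by (\ref{negdiv}) its integral over $A_{\rho,R}$ equals $J_r(\rho)/J^w_r(\rho)-J_r(R)/J^w_r(R)$, which vanishes under your condition (ii). Hence $\Div X\equiv 0$, the pointwise inequality forces $\Vert\nabla^P r\Vert\equiv 1$ on $A_{\rho,R}$, and the cone conclusion follows (minimality of the immersion making it a minimal cone). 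This mechanism for converting the integral condition into pointwise information is the missing idea in your proposal; without it, the appeal to Hessian rigidity in \cite{MP2} remains a sketch rather than a proof.
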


Geometric estimates of the capacity are sufficient to obtain large scale
consequences such as as the parabolic or hyperbolic character of the
manifold, \cite{I2,I1, MPcapacity,MP1}. We note here the following important equivalent conditions about the conformal type:

\begin{theoremA}\label{sullivan}
Let (M,g) be a given Riemannian manifold, Then the following conditions are equivalent
\begin{itemize}
\item There is a precompact open domain $K$ in $M$, such that the Brownian motion $X_t$ starting from $K$ does not return to $K$ with probability $1$, i.e.
\begin{equation}
P_x\left\{\omega \vert X_t(\omega) \in K \text{ for some }t>0 \right\}<1
\end{equation}
\item $M$ has positive capacity: There exist in $M$ a compact domain $K$, such that
\begin{equation}
\C(K,M)>0
\end{equation}
\item $M$ has finite resistance to infinity: There exist in $M$ a compact domain $K$, such that
\begin{equation}
R_\text{eff}(M-K)<\infty
\end{equation}
\end{itemize}
\end{theoremA}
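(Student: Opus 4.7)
The plan is to reduce the three-way equivalence to a single substantive argument. The equivalence of the second and third bullets is tautological: the identity $R_\text{eff}(M\setminus K)=1/\C(K,M)$ recalled just before the statement shows that $\C(K,M)>0$ for some precompact smooth $K$ if and only if $R_\text{eff}(M\setminus K)<\infty$ for the same $K$. It therefore suffices to prove the equivalence of the first bullet with the second.

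For that equivalence, the idea is to realize both the capacity $\C(K,M)$ and the Brownian return probability as limits of solutions of the Dirichlet problem (\ref{Dir-capa}) on a compact exhaustion $K\subset \Omega_1\subset \Omega_2\subset \cdots$ of $M$. For each $n$, let $\Psi_n$ be the equilibrium potential solving $\Delta\Psi_n=0$ on $\Omega_n\setminus K$ with $\Psi_n|_{\partial K}=1$ and $\Psi_n|_{\partial \Omega_n}=0$. The maximum principle shows that the sequence $\Psi_n$ is monotone and $0\le \Psi_n\le 1$, so by elliptic regularity $\Psi_n\to \Psi$ locally uniformly, where $\Psi$ is a bounded harmonic function on $M\setminus K$ with $\Psi|_{\partial K}=1$. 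On the probabilistic side, Dynkin's formula applied to $\Psi_n$ at the stopping time $\tau_K\wedge \sigma_{\Omega_n}$ — with $\tau_K$ the first hitting time of $K$ and $\sigma_{\Omega_n}$ the first exit from $\Omega_n$ — identifies $\Psi_n(x)=P_x\{\tau_K<\sigma_{\Omega_n}\}$, so $\Psi(x)=P_x\{\tau_K<\infty\}$ in the limit. On the analytic side, Green's identity and (\ref{flux-function}) give $\C(K,\Omega_n)=\int_{\partial K}|\nabla\Psi_n|\,d\mu$, and monotone/dominated convergence then yields $\C(K,M)=\int_{\partial K}|\nabla \Psi|\,d\mu$.

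With these two representations in hand the equivalence falls out. If $\C(K,M)=0$, then the boundary flux of $\Psi$ across $\partial K$ vanishes, and combined with the maximum principle and the Hopf boundary-point lemma this forces $\Psi\equiv 1$ on $M\setminus K$, whence $P_x\{\tau_K<\infty\}=1$ for all $x$, and the Markov property gives recurrence to $K$ with probability one. Conversely, if Brownian motion started in $K$ returns to $K$ almost surely, then $\Psi\equiv 1$ and the flux representation forces $\C(K,M)=0$. The main obstacle in making this precise is the joint passage to the limit as $n\to\infty$: the probabilistic identification of $\Psi$ requires stochastic completeness so that no probability mass escapes to infinity in finite time, and the convergence of the fluxes requires a uniform Hopf-type lower bound on $|\nabla\Psi_n|$ along $\partial K$ to push the flux identity to the limit. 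Both facts are classical for complete Riemannian manifolds, and detailed accounts appear in the references on capacity and conformal type already cited in this section.
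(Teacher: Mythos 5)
The paper contains no proof of Theorem \ref{sullivan}: it is quoted as a classical background result --- essentially the standard characterization of transience via capacity (see \cite{GriExp,GriCap}) --- and used as a black box, so there is no argument of the paper's to compare yours against. Judged on its own merits, your sketch follows the standard route from the literature: the second and third bullets are equivalent by the definition $R_{\text{eff}}(M-K)=1/\C(K,M)$, and the first and second are linked through the equilibrium potentials $\Psi_n$ of a compact exhaustion, identified probabilistically as hitting probabilities and analytically as carriers of the capacity. That skeleton is correct.

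Two of your technical claims are wrong as stated, and the first is a genuine gap. (a) You require stochastic completeness and assert that it is ``classical for complete Riemannian manifolds.'' Geodesic completeness does not imply stochastic completeness (complete manifolds whose curvature tends to $-\infty$ faster than quadratically can explode in finite time), and, worse, the theorem is stated for an arbitrary Riemannian manifold $(M,g)$ with no completeness hypothesis at all, so your argument as written does not cover the stated generality. The repair is to drop the assumption rather than defend it: the events $\{\tau_K<\sigma_{\Omega_n}\}$ increase to $\{\tau_K<\zeta\}$, where $\zeta$ is the lifetime of the (minimal) process, and the return event ``$X_t\in K$ for some $t>0$'' is precisely $\{\tau_K<\zeta\}$, since a path must be alive in order to return; hence $\Psi=\lim_n\Psi_n$ is the return probability whether or not mass escapes to infinity in finite time. (b) Passing the flux identity to the limit does not call for a ``uniform Hopf-type lower bound on $|\nabla\Psi_n|$'': a lower bound on gradients has no bearing on convergence of boundary integrals. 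What is needed is a uniform up-to-the-boundary gradient (Schauder) estimate near $\partial K$, which holds because all $\Psi_n$ are harmonic, uniformly bounded, and share the constant boundary value $1$ on the fixed smooth hypersurface $\partial K$; Hopf's lemma enters only where you correctly use it, namely to force $\Psi\equiv 1$ from vanishing flux. Alternatively, one can avoid boundary fluxes entirely by working with the Dirichlet energies $\int_{\Omega_n\setminus K}\Vert\nabla\Psi_n\Vert^2\,d\mu$ and their monotone limit, which is the classical presentation; note that this quadratic energy is what the capacity here means, even though the paper's displayed definition shows $\Vert\nabla u\Vert$ to the first power.
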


A manifold satisfying the conditions of the above theorem will be called a
hyperbolic manifold, otherwise it is called a parabolic manifold.

As a consequence of the above theorem we can state the following corollary for minimal submanifolds:

 \begin{corollary}\label{cor-capacity-baix}Let $\varphi: P^m \longrightarrow N^n$ be an isometric, proper, and minimal immersion of a complete non-compact Riemannian $m$-manifold $P^m$ into a complete Riemannian manifold $N^n$ with a pole $o\in N$ .  Let us suppose that the $o-$radial sectional curvatures of $N$ are bounded from above by
$$K_{o,N}(\sigma_{x}) \leq
-\frac{w''(r)}{w(r)}(\varphi(x))\,\,\,\forall x \in P \quad ,$$ and
the warping function $w$ satisfies
$$
w'\geq 0\quad .
$$
Then

\begin{enumerate}
\item If $M_w^m$ is a hyperbolic manifold, then $P$ is a hyperbolic manifold.
\item In consequence, if $P$ is parabolic, then $M_w^m$ is also parabolic.
\end{enumerate}
\end{corollary}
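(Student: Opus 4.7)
The plan is to combine Theorem \ref{capacity-baix} with Theorem A. Theorem A characterises hyperbolicity as the existence of a compact set of strictly positive capacity, so the strategy reads: express hyperbolicity of the model as positivity of capacity along a concentric exhaustion, transport that lower bound to $P$ via the capacity comparison, and then recognise the resulting positive capacity on $P$ as hyperbolicity.

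Assume $M_w^m$ is hyperbolic. By Theorem A there is a compact $K\subset M_w^m$ with $\C(K,M_w^m)>0$. Enclosing $K$ in a closed geodesic ball $\overline{B_{\rho}^w}$ and using monotonicity of capacity in the inner set, I may take $K=\overline{B_\rho^w}$ for some $\rho>0$ with $\C(\overline{B_\rho^w},M_w^m)>0$. Exhausting $M_w^m$ by the concentric balls $B_R^w$ and using that capacity with respect to the ambient manifold is the limit of capacitor capacities over such an exhaustion,
\begin{equation}
\lim_{R\to\infty}\C(A_{\rho,R}^w)\;=\;\C(\overline{B_\rho^w},M_w^m)\;>\;0.
\end{equation}
By Theorem \ref{capacity-baix}, for every $R>\rho$,
\begin{equation}
\C(A_{\rho,R})\;\geq\;\frac{J_r(\rho)}{J_r^w(\rho)}\,\C(A_{\rho,R}^w),
\end{equation}
so sending $R\to\infty$ gives $\C(\overline{D_\rho},P)>0$ provided that the coefficient $J_r(\rho)/J_r^w(\rho)$ is strictly positive. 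A second application of Theorem A then shows that $P$ is hyperbolic, and part (2) follows by contraposition.

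The only substantive point is producing an admissible $\rho$ with $J_r(\rho)>0$; the denominator $J_r^w(\rho)$ is automatically positive and finite because $\lvert\nabla r\rvert\equiv 1$ along the compact sphere $\partial B_\rho^w$ of the model. I would invoke the coarea identity
\begin{equation}
\int_0^T J_r(s)\,ds\;=\;\int_{D_T}\lvert\nabla^P r_o\rvert^{2}\,d\mu_P,
\end{equation}
whose right-hand side is strictly positive for $T$ sufficiently large, since $r_o\circ\varphi$ is unbounded on $P$ by properness and non-compactness of the immersion. Hence the set $\{\rho>0 : J_r(\rho)>0\}$ has positive measure, and any $\rho$ chosen from it also forces $D_\rho\neq\emptyset$. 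This closes the argument; the one delicate ingredient is therefore the coarea/properness step, while the rest is a routine limit interchange justified by monotonicity of capacity in $R$.
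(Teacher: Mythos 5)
Your proposal is correct and follows essentially the same route the paper intends: Corollary \ref{cor-capacity-baix} is presented there as an immediate consequence of Theorem \ref{capacity-baix} together with Theorem \ref{sullivan}, i.e.\ precisely your chain $\C(A_{\rho,R})\geq \bigl(J_r(\rho)/J_r^w(\rho)\bigr)\C(A_{\rho,R}^w)$, followed by the exhaustion limit $R\to\infty$ over the extrinsic balls (which exhaust $P$ by properness), followed by the positive-capacity characterization of hyperbolicity. Your additional coarea/properness step producing some $\rho$ with $J_r(\rho)>0$ is sound and is in fact a useful extra precaution, since it also covers the case $\varphi^{-1}(o)=\emptyset$ which the corollary's statement permits but Theorem \ref{capacity-baix} formally excludes; note only that your two choices of $\rho$ (one enclosing $K$, one with positive flux) are compatible because in a model space $\C\bigl(\overline{B_\rho^w},M_w^m\bigr)=\bigl(\int_\rho^\infty \frac{ds}{\Vol(S_s^w)}\bigr)^{-1}$ is positive for one radius if and only if it is positive for every radius.
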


Since $\frac{J_r(R)}{J_r^w(R)} $ and $\frac{\Vol(D_R)}{\Vol(B_R^w)}$ are  non-decreasing functions under our hypothesis, we can define  two expressions which are analogous to the projective volume defined by V. G. Tkachev in \cite{Tkachev}

\begin{definition}
Given $\varphi:P^m\to N^n$ an immersion into a manifold $N$ with a pole $o\in N$. The  $w$-\emph{flux} $\text{Flux}_w(P)$ and the  $w$-\emph{volume}  $\Vol_w(P)$ of the submanifold $P$ are defined by :
\begin{equation}
\begin{aligned}
\text{Flux}_w(P)&:=\sup_{R\in \erre^+}\frac{J_r(R)}{J_r^w(R)}\quad , \\
\Vol_w(P)&:=\sup_{R\in \erre^+}\frac{\Vol(D_R)}{\Vol(B^w_R)}\quad .
\end{aligned}
\end{equation}
We will say that $P$ has \emph{finite} $w-$ \emph{flux} ( resp. \emph{finite} $w-$ \emph{volume})  if and only if  $\text{Flux}_w(P)<\infty$  ( or $\Vol_w(P)<\infty$).
\end{definition}
We refer to theorem \ref{volume-flux-theo} for the relation between the $w-$flux and the $w-$volume of a submanifold.

From theorem \ref{sullivan} and theorem \ref{capacity} we can  now state that for minimal submanifolds with finite $w-$flux we have:

\begin{corollary}\label{parabolic-hiperbolic-dalt} Let $\varphi: P^m \longrightarrow N^n$ be an isometric, proper and minimal immersion of a complete non-compact Riemannian $m$-manifold $P^m$ into a complete Riemannian manifold $N^n$ with a pole $o\in N^n$.  Let us suppose that the $o-$radial sectional curvatures of $N^n$ are bounded from above as follows
$$
K_{o,N}(\sigma_{x}) \leq
-\frac{w''(r)}{w(r)}(\varphi(x))\,\,\,\forall x \in P \quad ,
$$
and that the model space $M_w^m$ is balanced from below. Suppose
moreover that $P$ has finite $w-$flux.  Then
\begin{enumerate}
 \item If $M_w^m$ is a parabolic manifold, then $P$ is a parabolic
   manifold.
\item If $P$ is an hyperbolic manifold, then $M_w^n$ is an hyperbolic manifold.
\end{enumerate}
\end{corollary}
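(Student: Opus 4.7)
The plan is to establish (1) and deduce (2) as its contrapositive, since \emph{hyperbolic} is by definition the negation of \emph{parabolic}. The core idea is that Theorem \ref{capacity} gives $\C(A_{\rho,R})\leq \frac{J_r(R)}{J_r^w(R)}\,\C(A^w_{\rho,R})$ for every $R>\rho$, and the finite $w$-flux hypothesis $\sup_R \frac{J_r(R)}{J_r^w(R)}=\text{Flux}_w(P)<\infty$ bounds the multiplier uniformly in $R$; if the model is parabolic, $\C(A^w_{\rho,R})$ decays to zero and drags the extrinsic capacity down with it.

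Concretely, first I would fix $\rho>0$ large enough that $D_\rho$ is nonempty (possible because $P$ is complete and noncompact and $\varphi$ is proper). Properness of $\varphi$ implies that each $\overline{D_R}$ is compact in $P$ and that $\{D_R\}_{R>\rho}$ exhausts $P$ by relatively compact open sets; therefore, by the exhaustion definition of capacity at infinity,
\[
\C(\overline{D_\rho},P)=\lim_{R\to\infty}\C(A_{\rho,R}),\qquad \C(\overline{B^w_\rho},M_w^m)=\lim_{R\to\infty}\C(A^w_{\rho,R}).
\]

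Second, applying Theorem \ref{capacity} for each $R>\rho$ and using the defining supremum of $\text{Flux}_w(P)$, I would obtain
\[
\C(A_{\rho,R})\leq \frac{J_r(R)}{J_r^w(R)}\,\C(A^w_{\rho,R})\leq \text{Flux}_w(P)\,\C(A^w_{\rho,R}).
\]
Letting $R\to\infty$, if $M_w^m$ is parabolic then Theorem A applied to the compact set $\overline{B^w_\rho}\subset M_w^m$ yields $\C(\overline{B^w_\rho},M_w^m)=0$, hence $\C(\overline{D_\rho},P)=0$. Since $\overline{D_\rho}$ is compact with nonempty interior in $P$, a second application of Theorem A, now to $P$, concludes that $P$ is parabolic, proving (1). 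Statement (2) is the contrapositive of (1).

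The main obstacle is actually quite mild: the only delicate point is checking that the exhaustion definitions of $\C(\overline{D_\rho},P)$ and $\C(\overline{B^w_\rho},M_w^m)$ agree with the notion of capacity invoked in Theorem A, and that $\overline{D_\rho}$ has nonempty interior so Theorem A can be run in both directions. Once these bookkeeping points are verified, the proof is simply an exchange of limits made legitimate by the uniform bound on $J_r(R)/J_r^w(R)$ coming from the finite $w$-flux hypothesis.
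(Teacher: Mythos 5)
Your proposal is correct and follows exactly the route the paper intends: the paper states this corollary as an immediate consequence of Theorem \ref{capacity} (the upper capacity bound $\C(A_{\rho,R})\leq \frac{J_r(R)}{J_r^w(R)}\C(A^w_{\rho,R})$, uniformly controlled by $\text{Flux}_w(P)<\infty$) together with the capacity characterization of hyperbolicity in Theorem \ref{sullivan}, with (2) as the contrapositive of (1). You have merely made explicit the exhaustion/limit bookkeeping (properness giving $\C(\overline{D_\rho},P)=\lim_{R\to\infty}\C(A_{\rho,R})$, and monotonicity of capacity so that vanishing for the extrinsic balls forces parabolicity) that the paper leaves implicit.
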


Joining the previous two corollaries together we get:

\begin{corollary}\label{parabolic-hiperbolic}Let $\varphi: P^m \longrightarrow N^n$ be an isometric, proper and minimal immersion of a complete non-compact Riemannian $m$-manifold $P^m$ into a complete Riemannian manifold $N^n$ with a pole $o\in N^n$ .  Let us suppose that the $o-$radial sectional curvatures of $N^n$ are bounded from above,
$$
K_{o,N}(\sigma_{x}) \leq
-\frac{w''(r)}{w(r)}(\varphi(x))\,\,\,\forall x \in P\quad ,
$$
that the warping function $w$ satisfies
$$
w'\geq 0\quad,
$$
and that the model space $M_w^m$ is balanced from below,  and that $P$ has finite $w-$flux.  Then $P$ is hyperbolic (parabolic) if and only if $M_w^m$ is hyperbolic (parabolic).
\end{corollary}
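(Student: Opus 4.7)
The plan is to obtain the biconditional by simply pairing the two preceding corollaries, which between them already cover both implications in each direction. The hypotheses of the present statement---proper minimal immersion, radial sectional curvature bound $K_{o,N}\leq -w''/w$, monotonicity $w'\geq 0$, balanced from below model, and finite $w$-flux---are arranged so as to satisfy \emph{simultaneously} the hypotheses of Corollary \ref{cor-capacity-baix} (which only needs $w'\geq 0$ on top of the basic setup) and those of Corollary \ref{parabolic-hiperbolic-dalt} (which needs balanced from below together with finite $w$-flux). Hence both corollaries apply without modification.

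From Corollary \ref{cor-capacity-baix}(1) one reads the implication ``$M_w^m$ hyperbolic $\Rightarrow P$ hyperbolic'', while Corollary \ref{parabolic-hiperbolic-dalt}(2) supplies the reverse implication ``$P$ hyperbolic $\Rightarrow M_w^m$ hyperbolic''. This establishes the hyperbolic equivalence. For the parabolic equivalence, Corollary \ref{parabolic-hiperbolic-dalt}(1) gives ``$M_w^m$ parabolic $\Rightarrow P$ parabolic'', and Corollary \ref{cor-capacity-baix}(2) gives the converse. Since by Theorem \ref{sullivan} parabolicity is defined as the negation of hyperbolicity, the two equivalences are logically the same statement and either one alone suffices.

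There is no real obstacle: the substantive work has already been done in the two preceding corollaries, which in turn rest on the capacity comparisons of Theorems \ref{capacity-baix} and \ref{capacity}. The only observation worth making explicit is that the joint hypotheses permit using both sides of the capacity sandwich
\[
\frac{J_r(\rho)}{J_r^w(\rho)} \;\leq\; \frac{\C(A_{\rho,R})}{\C(A_{\rho,R}^w)} \;\leq\; \frac{J_r(R)}{J_r^w(R)}
\]
at once, pinching the capacity quotient between two quantities whose behaviour as $R\to\infty$ is controlled by the finiteness of $\operatorname{Flux}_w(P)$. Letting the outer radius tend to infinity then transfers the effective resistance estimates, and hence (via Theorem \ref{sullivan}) the conformal type, from $M_w^m$ to $P$ and back.
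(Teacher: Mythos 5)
Your proposal is correct and is exactly the paper's own argument: the paper introduces this corollary with the phrase ``Joining the previous two corollaries together we get,'' i.e., it likewise reads the implication ``$M_w^m$ hyperbolic $\Rightarrow P$ hyperbolic'' from Corollary \ref{cor-capacity-baix} and the reverse implications from Corollary \ref{parabolic-hiperbolic-dalt}, noting that the combined hypotheses satisfy both corollaries at once. Your additional remarks (the contrapositive equivalence of the two formulations via Theorem \ref{sullivan}, and the two-sided capacity sandwich underlying the earlier corollaries) are accurate but not needed beyond what the pairing already gives.
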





\section{Preliminaires}\label{Prelim}

We assume throughout the paper that $\varphi: P^m \longrightarrow N^n$ is an isometric immersion of a complete non-compact Riemannian $m$-manifold $P^m$ into a complete Riemannian manifold $N^n$ with a pole $o\in N^n$ . Recall that a pole
is a point $o$ such that the exponential map
$$\exp_{o}\colon T_{o}N^{n} \to N^{n}$$ is a
diffeomorphism.

  For every $x \in N^{n}- \{o\}$ we
define $r(x) = r_o(x) = \dist_{N}(o, x)$, since $o$ is a pole  this
distance is realized by the length of a unique
geodesic from $o$ to $x$, which is the {\it
radial geodesic from $o$}. We also denote by $r\vert_P$ or by $r$
the composition $r\circ \varphi: P\to \erre_{+} \cup
\{0\}$. This composition is called the
{\em{extrinsic distance function}} from $o$ in
$P^m$.

With the extrinsic distance we can construct the {\em extrinsic ball} $D_R(o)$ of radius $R$ centered at $o$ as

$$
D_R(o):=\{x\in P : r(\varphi(x))< R\}\quad.
$$
Since $\partial D_t(o)=\Sigma_t^r$, the flux of the extrinsic distance function $r$ on $P$ is
$$
J_r(t)=\int_{\partial D_t}\vert \nabla^P r\vert d\rho \quad ,
$$

where the gradients of $r$ in $N$ and $r\vert_P$ in  $P$ are
denoted by $\nabla^N r$ and $\nabla^P r$,
respectively. These two gradients have
the following basic relation, by virtue of the identification, given any point $x\in P$, between the tangent vector fields $X \in T_xP$ and $\varphi_{*_{x}}(X) \in T_{\varphi(x)}N$

\begin{equation}\label{radiality}
\nabla^N r = \nabla^P r +(\nabla^N r)^\bot ,
\end{equation}
where $(\nabla^N r)^\bot(\varphi(x))=\nabla^\bot r(\varphi(x))$ is perpendicular to
$T_{x}P$ for all $x\in P$.

We now present the curvature restrictions which constitute the geometric framework of the present study.

\begin{definition}
Let $o$ be a point in a Riemannian manifold $N$
and let $x \in N-\{ o \}$. The sectional
curvature $K_{N}(\sigma_{x})$ of the two-plane
$\sigma_{x} \in T_{x}N$ is then called a
\textit{$o$-radial sectional curvature} of $N$ at
$x$ if $\sigma_{x}$ contains the tangent vector
to a minimal geodesic from $o$ to $x$. We denote
these curvatures by $K_{o, N}(\sigma_{x})$.
\end{definition}

\subsection{Model spaces}\label{subsecWarp}
Throughout this paper we shall assume that the ambient manifold
$N^n$ has its $o$-radial sectional curvatures $K_{o,N}(x)$ bounded
from above by the expression $K_w(r(x))=-w''(r(x))/w(r(x))$, which are
precisely the radial sectional curvatures of the {\em
$w$-model space} $\,M^{m}_{w}\,$ we are going to define.

\begin{definition}[See {\cite{Oneill,GriExp,GW}}]\label{defModel}
A $w-$model $M_{w}^{m}$ is a smooth warped product with base $B^{1}
= [0,\Lambda[ \,\subset \mathbb{R}$ (where $0 < \Lambda
\leq  \infty$), fiber $F^{m-1} = \mathbb{S}^{m-1}_{1}$ (i.e. the unit
$(m-1)$-sphere with standard metric), and warping function $w\colon
[0,\Lambda[ \to \mathbb{R}_{+}\cup \{0\}$, with $w(0) = 0$,
$w'(0) = 1$, and $w(r) > 0$ for all $r >  0$. The point
$o_{w} = \pi^{-1}(0)$, where $\pi$ denotes the projection onto
$B^1$, is called the {\em{center point}} of the model space. If
$\Lambda = \infty$, then $o_{w}$ is a pole of $M_{w}^{m}$.
\end{definition}

\begin{proposition}\label{propSpaceForm}
The simply connected space forms $\mathbb{K}^{m}(b)$ of constant
curvature $b$ are $w-$models with warping functions
\begin{equation*}
w_b(r) = \begin{cases} \frac{1}{\sqrt{b}}\sin(\sqrt{b}\, r) &\text{if $b>0$}\\
\phantom{\frac{1}{\sqrt{b}}} r &\text{if $b=0$}\\
\frac{1}{\sqrt{-b}}\sinh(\sqrt{-b}\,r) &\text{if $b<0$}.
\end{cases}
\end{equation*}
Note that for $b > 0$ the function $w_{b}(r)$ admits a smooth
extension to  $r = \pi/\sqrt{b}$.
\end{proposition}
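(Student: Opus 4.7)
The plan is to work in geodesic polar (normal) coordinates centered at a chosen base point of $\mathbb{K}^m(b)$ and recognize directly that the resulting metric is of warped-product form with the claimed warping function $w_b$.

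First, I would pick a pole $o \in \mathbb{K}^m(b)$. By completeness and simple-connectedness, the exponential map $\exp_o$ is a diffeomorphism on the maximal open ball of radius $\Lambda$ in $T_o\mathbb{K}^m(b)$ on which it is defined (with $\Lambda = \infty$ when $b \leq 0$ and $\Lambda = \pi/\sqrt{b}$ when $b > 0$, on the complement of the antipode of $o$). Writing the metric in these normal coordinates as $dr^2 + g_r$, where $g_r$ is the induced metric on the geodesic sphere of radius $r$, the full isotropy group at $o$ acts transitively on unit radial directions, so by symmetry $g_r$ must be a scalar multiple of the round metric on the fiber $\mathbb{S}_1^{m-1}$. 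Hence $g_r = w(r)^2 g_{\mathbb{S}_1^{m-1}}$ for some positive function $w$ on $(0,\Lambda)$, which identifies $\mathbb{K}^m(b)$ with a $w$-model in the sense of Definition \ref{defModel}.

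Second, I would determine $w$ by an ODE. A standard warped-product computation (or the Jacobi equation along a radial geodesic) shows that in $M_w^m$ the $o$-radial sectional curvatures equal $-w''(r)/w(r)$, so the condition that $\mathbb{K}^m(b)$ has constant curvature $b$ becomes
\begin{equation*}
w''(r) + b\, w(r) = 0,
\end{equation*}
subject to the initial conditions $w(0)=0$ and $w'(0)=1$ already required by Definition \ref{defModel} (these follow from the Gauss Lemma and the fact that $\exp_o$ is a radial isometry to first order). Solving this linear second-order ODE in the three cases $b>0$, $b=0$, $b<0$ produces precisely the three formulas for $w_b$ stated in the proposition.

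Third, I would verify the structural conditions: $w_b(0)=0$, $w_b'(0)=1$, and $w_b(r)>0$ on $(0,\Lambda)$, which are all immediate from the explicit formulas. The main mild subtlety is the last sentence of the statement, concerning the smooth extension of $w_b$ to $r=\pi/\sqrt{b}$ when $b>0$; here $w_b$ itself is real-analytic on all of $\mathbb{R}$, so the extension to the closed interval is automatic, and it encodes the geometric fact that the radial coordinate based at $o\in \mathbb{K}^m(b)$ can be continued to the antipodal point, where $w_b$ vanishes again to first order. There is no serious obstacle in the argument; the only place requiring a little care is the reduction to the warped-product form, which relies on the transitive action of the isotropy group of $o$ on unit tangent directions, a property specific to constant-curvature simply connected spaces.
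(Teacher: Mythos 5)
Your proof is correct, but note that the paper itself offers no proof of Proposition \ref{propSpaceForm} at all: it is stated as a classical fact, with the surrounding model-space material attributed to the standard references (O'Neill, Greene--Wu, Grigor'yan). So your argument is a genuine filling-in rather than a parallel of the paper's reasoning. Your route decomposes the given space form: polar coordinates around a pole, the observation that the $O(m)$ isotropy action forces the induced metric on geodesic spheres to be a constant multiple of the round one (strictly, this uses not just transitivity on directions but irreducibility of the isotropy representation, i.e.\ a Schur-type argument --- a point worth making explicit), and then the radial-curvature identity $K_{o,M_w}=-w''/w$ of Proposition \ref{propWarpMean} to reduce everything to the ODE $w''+bw=0$ with $w(0)=0$, $w'(0)=1$. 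The alternative, and arguably more common, route in the cited literature runs in the opposite direction: one verifies that the explicit warped product $M^m_{w_b}$ has constant curvature $b$ (this requires checking the tangential curvatures $\bigl(1-(w_b')^2\bigr)/w_b^2=b$ as well as the radial ones, since constant curvature concerns all two-planes), and then invokes the Killing--Hopf uniqueness theorem for simply connected complete space forms to conclude it is isometric to $\mathbb{K}^m(b)$. Your direction buys independence from the classification theorem and from the tangential curvature formula, at the price of the homogeneity/Schur argument; the reverse direction is computationally self-contained but leans on a deeper uniqueness result. Your handling of the endpoint $r=\pi/\sqrt{b}$ via real-analyticity of $w_b$ is also correct and matches the geometric picture of closing up the sphere at the antipode.
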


\begin{proposition}[See {\cite{Oneill,GW,GriExp}}]\label{propWarpMean}
Let $M_{w}^{m}$ be a $w-$model space with warping function $w(r)$ and
center $o_{w}$. The distance sphere of radius $r$ and center $o_{w}$
in $M_{w}^{m}$ is the fiber $\pi^{-1}(r)$. This distance sphere has
the constant mean curvature $\eta_{w}(r)= \frac{w'(r)}{w(r)}$ On the other hand, the
$o_{w}$-radial sectional curvatures of $M_{w}^{m}$ at every $x \in
\pi^{-1}(r)$ (for $r > 0$) are all identical and determined
by
\begin{equation*}
K_{o_{w} , M_{w}}(\sigma_{x}) = -\frac{w''(r)}{w(r)}.
\end{equation*}
\end{proposition}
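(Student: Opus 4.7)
The plan is to derive the three assertions directly from the warped-product metric $g_{M_w^m}=dr^2+w(r)^2 g_{\mathbb{S}_1^{m-1}}$, invoking the classical Koszul and curvature identities for warped products as in \cite{Oneill}. The overall scheme is (i) identify the distance spheres geometrically, (ii) compute the shape operator, and (iii) read off the radial sectional curvatures from O'Neill's warped-product formula.

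First I would show that $\pi^{-1}(r)$ is the metric sphere of radius $r$. Fixing $\theta\in\mathbb{S}_1^{m-1}$, the curve $t\mapsto(t,\theta)$ satisfies $\nabla_{\partial_r}\partial_r=0$ (immediate from the warped-product Koszul formula, since $\partial_r$ is the gradient of $r$ and $g(\partial_r,\partial_r)=1$), and is therefore a unit-speed geodesic issuing from $o_w$. Because $o_w$ is a pole, $\exp_{o_w}$ is a diffeomorphism, so this radial curve is the unique minimizing geodesic from $o_w$ to $(r,\theta)$ and has length $r$. Hence $\pi^{-1}(r)=\{x\in M_w^m:\mathrm{dist}(o_w,x)=r\}$.

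Next, to compute the mean curvature I would use $\partial_r$ as the outward unit normal of $\pi^{-1}(r)$. For any vector field $X$ tangent to the fiber, the warped-product connection identity gives
\begin{equation*}
\nabla_X\partial_r=\frac{w'(r)}{w(r)}\,X,
\end{equation*}
so the Weingarten endomorphism of $\pi^{-1}(r)$ is the homothety $(w'(r)/w(r))\,\mathrm{Id}$ on $T_x\pi^{-1}(r)$. Averaging the eigenvalues yields the constant mean curvature $\eta_w(r)=w'(r)/w(r)$, as claimed. Finally, for a radial $2$-plane $\sigma_x\subset T_xM_w^m$ I would pick an orthonormal basis $\{\partial_r,X\}$ with $X$ tangent to $\pi^{-1}(r)$. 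The standard warped-product curvature formula gives
\begin{equation*}
R(\partial_r,X)\partial_r=-\frac{w''(r)}{w(r)}\,X,
\end{equation*}
and hence $K_{o_w,M_w}(\sigma_x)=\langle R(\partial_r,X)\partial_r,X\rangle=-w''(r)/w(r)$. Since the right-hand side depends only on $r$, this value is the same for every $X$ and every $x\in\pi^{-1}(r)$.

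The only real work lies in the warped-product Koszul and curvature computations, which are purely routine; the main conceptual point to highlight is that the rotational symmetry of the fiber $\mathbb{S}_1^{m-1}$ forces both the shape operator and the radial sectional curvatures to be isotropic on each sphere $\pi^{-1}(r)$, so all quantities reduce to functions of $r$ alone. For this reason I expect no real obstacle; the proposition is a direct translation of standard warped-product identities into the language of $w$-models.
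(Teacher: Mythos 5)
Your proof is correct, and it follows essentially the route the paper itself relies on: Proposition~\ref{propWarpMean} is quoted without proof from \cite{Oneill,GW,GriExp}, and your three steps (radial curves are minimizing unit-speed geodesics, $\nabla_X\partial_r=\frac{w'(r)}{w(r)}X$ for fiber-tangent $X$, and the warped-product curvature identity) are precisely the computations carried out in those references. Two minor points to tidy up: your appeal to the pole property only covers the case $\Lambda=\infty$ of Definition~\ref{defModel} (for models with $\Lambda<\infty$, such as the spherical ones in Proposition~\ref{propSpaceForm}, replace it by the remark that any curve $\gamma$ from $o_w$ to $\pi^{-1}(r)$ has length at least $\int\vert dr(\dot\gamma)\vert\,dt\geq r$), and the pair of displays $R(\partial_r,X)\partial_r=-\frac{w''}{w}X$ and $K=\langle R(\partial_r,X)\partial_r,X\rangle$ is consistent in O'Neill's sign convention for the curvature tensor (in the opposite convention both signs flip, leaving the convention-independent conclusion $K_{o_w,M_w}(\sigma_x)=-w''(r)/w(r)$ unchanged).
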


\begin{remark}
The $w-$model spaces are completely determined via $w$
by the mean curvatures of the spherical fibers $S^{w}_{r}$:
$$
\,\eta_{w}(r) = w'(r)/w(r)\,\quad,
$$
by the
volume of the fiber
$$
\,\Vol(S^{w}_{r}) \, = V_{0}\,w^{m-1}(r)\,\quad ,
$$
and by the volume of the corresponding ball, for which the fiber is
the boundary
$$
\,\Vol(B^{w}_{r}) \, = \, V_{0}\, \int_{0}^{r}
\,w^{m-1}(t)\,dt\,\quad .
$$
Here $V_{0}$ denotes the volume of the unit sphere $S^{0,m-1}_{1}$, (we denote in general as $S^{b,m-1}_r$ the sphere of radius $r$ in the real space form $\kam$) .
The latter two functions define the isoperimetric quotient function
as follows
$$
\,q_{w}(r) \, = \, \Vol(B^{w}_{r})/\Vol(S^{w}_{r}) \quad .
$$
We observe moreover that the flux of the geodesic distance function
$r_o$ from the center to the model space is
$$
J^w_r(R)=\int_{S^w_R}\vert \nabla r \vert d\sigma=\Vol(S^w_R)\quad.
$$
\end{remark}

Besides the already defined comparison controllers for the radial sectional curvatures of
$N^{n}$, we shall need two further purely intrinsic conditions
on the model spaces:

\begin{definition}
A given $w-$model space $\, M^{m}_{w}\,$ is
called balanced from below and balanced from above, respectively, if
the following weighted isoperimetric conditions are satisfied:
$$
\begin{aligned}
\text{Balance from below:}\quad q_{w}(r)\,\eta_{w}(r) \, &\geq 1/m \quad \text{for all} \quad r \geq 0 \quad ;\\
\text{Balance from above:}\quad  q_{w}(r)\,\eta_{w}(r) \, &\leq
1/(m-1) \quad \text{for all} \quad r \geq 0 \quad .
\end{aligned}
$$
A model space is called {\em{totally balanced}} if it is balanced
both from below and from above.
\end{definition}
\subsection{Laplacian comparison for radial functions}
Let us recall the expression of the Laplacian on model spaces for
radial functions
\begin{proposition} [See \cite{Oneill},  \cite{GW} and \cite{GriExp}]
Let $M_w^n$ be a model space, denote by $r:M_w^n-\{o_w\}\to \erre^+$ the
geodesic distance from the center $ o_w$, let $f:\erre \to \erre$ be a smooth function, then
\begin{equation}
\Delta^{M_w^n}\left(f\circ r\right)=f''\circ r+\left(n-1\right)
\left(f'\cdot \eta_w\right)\circ r\quad .
\end{equation}
\end{proposition}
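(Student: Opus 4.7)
The plan is to reduce the claim to the two identities $|\nabla r|\equiv 1$ and $\Delta^{M_w^n} r=(n-1)\,\eta_w\circ r$, and then assemble them via the product rule for the divergence. Concretely, since $\nabla(f\circ r)=(f'\circ r)\,\nabla r$, the identity $\Delta u=\operatorname{div}(\nabla u)$ applied to $u=f\circ r$ gives
\begin{equation*}
\Delta^{M_w^n}(f\circ r)=(f''\circ r)\,|\nabla r|^{2}+(f'\circ r)\,\Delta^{M_w^n} r,
\end{equation*}
so once the two radial identities above are in hand the proposition follows immediately by substitution.

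For the first identity, I would invoke the fact that $o_{w}$ is a pole, so $r$ is a smooth distance function on $M_w^n\setminus\{o_w\}$ and therefore $|\nabla r|\equiv 1$ by the Gauss lemma applied to the exponential chart based at $o_w$.

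For the second identity, which is the only genuinely computational step, I would use the warped product coordinates in which the metric reads $g=dr^{2}+w(r)^{2}\,g_{\mathbb{S}^{n-1}_{1}}$. The volume density then factors as $\sqrt{|g|}=w(r)^{n-1}\sqrt{|g_{\mathbb{S}^{n-1}_{1}}|}$, and since $g^{rr}=1$ while $g^{rj}=0$ for the angular coordinates, the Laplace--Beltrami formula
\begin{equation*}
\Delta^{M_w^n} u=\frac{1}{\sqrt{|g|}}\,\partial_{i}\!\bigl(\sqrt{|g|}\,g^{ij}\partial_{j}u\bigr)
\end{equation*}
collapses, for $u=r$, to
\begin{equation*}
\Delta^{M_w^n} r=\frac{1}{w(r)^{n-1}}\,\partial_{r}\!\bigl(w(r)^{n-1}\bigr)=(n-1)\,\frac{w'(r)}{w(r)}=(n-1)\,\eta_{w}(r).
\end{equation*}

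Putting the pieces together yields $\Delta^{M_w^n}(f\circ r)=f''\circ r+(n-1)(f'\cdot\eta_w)\circ r$, which is the stated formula. There is no real obstacle: the only bookkeeping worth flagging is the factorization of $\sqrt{|g|}$ coming from the warped product structure, which is what converts the mean curvature $\eta_{w}(r)=w'(r)/w(r)$ of the distance sphere (recorded in Proposition~\ref{propWarpMean}) into the coefficient of $f'$.
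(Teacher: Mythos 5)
Your proof is correct. The paper does not prove this proposition at all---it is quoted as a known fact from the cited references (O'Neill, Greene--Wu, Grigor'yan)---and your argument is precisely the standard one found there: split off $\Delta^{M_w^n}(f\circ r)=(f''\circ r)\,|\nabla r|^{2}+(f'\circ r)\,\Delta^{M_w^n} r$, use the Gauss lemma to get $|\nabla r|\equiv 1$, and compute $\Delta^{M_w^n} r=(n-1)\,\eta_w\circ r$ from the factorization of the volume density $\sqrt{|g|}=w(r)^{n-1}\sqrt{|g_{\mathbb{S}^{n-1}_{1}}|}$ in warped product coordinates.
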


Applying the Hessian comparison theorems given in \cite{GW} we can obtain (see
\cite{MP2} for instance)
\begin{proposition}\label{lap-comp} Let $\varphi:P^m\to N^n$ be an immersion into a
  manifold $N$ with a pole. Suppose the the radial sectional
  curvatures $K_N$ of $N$ are bounded from above by the radial sectional
  curvatures of a model space $M_w^m$ as follows:
\begin{equation}
K_N\leq -\frac{w''}{w}\circ r\quad .
\end{equation}
Let $f:\erre \to \erre$ be a smooth function with $f'\geq 0$, and
dennote by $r:P\to \erre^+$ the extrinsic distance function. Then
\begin{equation}
\begin{aligned}
\Delta^P\left(f\circ r\right)\geq &\vert \nabla^P r\vert\left(f''-f'\cdot \eta_w\right)\circ
r \\ &+ m \left(f'\cdot \eta_w\right)\circ r+m\langle \nabla^N
r, H_P\rangle f'\circ r\quad,
\end{aligned}
\end{equation}
where $H_P$ denotes the mean curvature vector of $P$ in $N$.
\end{proposition}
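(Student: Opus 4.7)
The plan is to combine the standard submanifold formula for the Laplacian of a restricted ambient function with the Hessian comparison theorem for the distance function from a pole. At any point $x\in P$, for any smooth $F:N\to\erre$ and any orthonormal basis $\{e_i\}_{i=1}^m$ of $T_xP$, one has
\begin{equation*}
\Delta^P(F\circ\varphi)(x)=\sum_{i=1}^m \hess^N F(e_i,e_i)+m\,\langle\nabla^N F,H_P\rangle.
\end{equation*}
Applied to $F=f\circ r$, the chain rule gives $\hess^N(f\circ r)=(f''\circ r)\,dr\otimes dr+(f'\circ r)\,\hess^N r$ and $\nabla^N(f\circ r)=(f'\circ r)\nabla^N r$, so the problem reduces to controlling $\sum_i \hess^N r(e_i,e_i)$ and $\sum_i \langle e_i,\nabla^N r\rangle^2$ by geometric quantities attached to the immersion.

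The main geometric input I would invoke is the Greene--Wu Hessian comparison theorem (\cite{GW}): under the hypothesis $K_{o,N}\leq -(w''/w)\circ r$, for every $X\in T_{\varphi(x)}N$ one has
\begin{equation*}
\hess^N r(X,X)\geq \eta_w(r)\bigl(|X|^2-\langle X,\nabla^N r\rangle^2\bigr),
\end{equation*}
with equality when $X$ is radial. To simplify the contractions, I would adapt the frame $\{e_i\}$ to the tangential gradient so that $\nabla^P r$ is proportional to $e_1$. Then the orthogonal decomposition $\nabla^N r=\nabla^P r+(\nabla^N r)^\perp$ from (\ref{radiality}) yields immediately $\sum_i \langle e_i,\nabla^N r\rangle^2=|\nabla^P r|^2$ and $\sum_i\bigl(|e_i|^2-\langle e_i,\nabla^N r\rangle^2\bigr)=m-|\nabla^P r|^2$.

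Feeding these into the three summands, the $f''$-piece contributes $(f''\circ r)|\nabla^P r|^2$; the $\hess^N r$-piece, combined with the hypothesis $f'\geq 0$, is bounded below by $(f'\circ r)(\eta_w\circ r)(m-|\nabla^P r|^2)$; and the mean-curvature term is precisely $m(f'\circ r)\langle\nabla^N r,H_P\rangle$. Adding and regrouping gives
\begin{equation*}
\Delta^P(f\circ r)\geq |\nabla^P r|^2\bigl(f''-f'\eta_w\bigr)\circ r+m(f'\eta_w)\circ r+m\langle\nabla^N r,H_P\rangle f'\circ r,
\end{equation*}
which is the asserted inequality. Rather than a deep obstacle, the delicate point is the sign bookkeeping: the \emph{upper} curvature bound produces a \emph{lower} bound for $\hess^N r$, and the monotonicity assumption $f'\geq 0$ is exactly what allows the comparison to be multiplied through without flipping. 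Everything else is the standard frame decomposition recalled in \S\ref{Prelim}.
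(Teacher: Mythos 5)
Your derivation is computationally sound and is exactly the route the paper (implicitly) takes: the paper offers no written proof beyond citing the Greene--Wu Hessian comparison and \cite{MP2}, and the intended argument is precisely your combination of the submanifold formula $\Delta^P(F\circ\varphi)=\sum_{i=1}^m\hess^N F(e_i,e_i)+m\langle\nabla^N F,H_P\rangle$, the chain rule for $\hess^N(f\circ r)$, the comparison $\hess^N r(X,X)\geq\eta_w(r)\bigl(|X|^2-\langle X,\nabla^N r\rangle^2\bigr)$, and the identity $\sum_i\langle e_i,\nabla^N r\rangle^2=|\nabla^P r|^2$, with $f'\geq 0$ used exactly where you use it.

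There is, however, one unremarked discrepancy, and it is precisely at your closing claim. What you prove has the factor $|\nabla^P r|^2$ in front of $(f''-f'\cdot\eta_w)\circ r$, whereas the proposition as printed has $|\nabla^P r|$ to the first power; these are not the same statement. Since $0\leq|\nabla^P r|\leq 1$, one has $|\nabla^P r|^2\leq|\nabla^P r|$, so your (squared) bound implies the printed one only at points where $(f''-f'\cdot\eta_w)\circ r\leq 0$. In fact the printed first-power version is false for general $f$ with $f'\geq 0$: take $N=\erre^n$, $w=w_0$ (so $\eta_w=1/r$), let $P$ be an affine $m$-plane not passing through $o$ (totally geodesic, so $H_P=0$, and $0<|\nabla^P r|<1$ away from the foot point), and $f(t)=t^3$. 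Then $\Delta^P(f\circ r)=f''|\nabla^P r|^2+\frac{f'}{r}\left(m-|\nabla^P r|^2\right)$ holds exactly, and subtracting the first-power lower bound leaves $\left(|\nabla^P r|^2-|\nabla^P r|\right)\left(f''-f'/r\right)=\left(|\nabla^P r|^2-|\nabla^P r|\right)\cdot 3r<0$, so the claimed inequality fails. In other words, you proved the correct statement (it is the one appearing in \cite{MP2}), the exponent in the paper's proposition is a misprint, and the passage from your inequality to the printed one requires the extra hypothesis $f''-f'\cdot\eta_w\leq 0$ --- which does hold in every application made in the paper (for $f=\Psi^w_{\rho,R}$ one has $f''-f'\eta_w=-m\eta_wf'\leq 0$, and for $f=F$ with $F'=w$ one has $f''-f'\eta_w=0$). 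You should have flagged this mismatch explicitly rather than asserting that your display ``is the asserted inequality.''
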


\subsubsection{Capacity and the Mean Exit Time function on Model spaces}

One key purpose of this paper is to compare the capacity of extrinsic annuli of an immersed minimal submanifold with the capacity in an adequate model space. In the model space we can obtain the value of the capacity directly:

\begin{proposition}[See \cite{GriCap}] Let $M_w^n$ be a model space. Then
\begin{equation}
\C(A^w_{\rho,R})=\left(\int_\rho^R\frac{ds}{\Vol(S_s^w)}\right)^{-1}=V_n \left(\int_\rho^R\frac{ds}{w^{n-1}}\right)^{-1}\quad .
\end{equation}
\end{proposition}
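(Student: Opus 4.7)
The plan is to solve the Dirichlet problem (\ref{Dir-capa}) explicitly on the rotationally symmetric annulus $A^w_{\rho,R}\subset M^n_w$ and then evaluate the Dirichlet energy of the resulting harmonic minimizer $\Psi$. Since the minimum in the definition of $\C(A^w_{\rho,R})$ is attained precisely by the solution of (\ref{Dir-capa}), the capacity equals the Dirichlet integral of $\Psi$.

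First, by the uniqueness of $\Psi$ and the rotational symmetry of $A^w_{\rho,R}$ about $o_w$, averaging $\Psi$ over the isotropy group of $o_w$ (which acts by isometries on $M^n_w$ and preserves $r$) produces a radial solution to (\ref{Dir-capa}); uniqueness forces $\Psi$ itself to be radial, $\Psi=\psi\circ r$, with $\psi(\rho)=0$ and $\psi(R)=1$.

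Next I would invoke the Laplacian formula for radial functions on a model space recalled just above the proposition: $\Delta^{M^n_w}(\psi\circ r)=\psi''\circ r+(n-1)(\eta_w\psi')\circ r$. The equation $\Delta\Psi=0$ thus becomes $\psi''+(n-1)(w'/w)\psi'=0$, which upon multiplication by $w^{n-1}$ reads $(w^{n-1}\psi')'=0$. Integrating once gives $\psi'(r)=C/w^{n-1}(r)$, and the boundary conditions determine
\begin{equation*}
C=\left(\int_\rho^R\frac{ds}{w^{n-1}(s)}\right)^{-1}.
\end{equation*}

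Finally, since $\vert\nabla\Psi\vert=\vert\psi'(r)\vert$, foliating the annulus by the level spheres $S^w_r$ and using the model identity $\Vol(S^w_r)=V_0\,w^{n-1}(r)$ together with the coarea formula gives
\begin{equation*}
\C(A^w_{\rho,R})=\int_\rho^R\psi'(r)^{2}\,\Vol(S^w_r)\,dr=V_0\,C^{2}\int_\rho^R\frac{dr}{w^{n-1}(r)}=V_0\,C,
\end{equation*}
which is exactly the claimed formula $\left(\int_\rho^R ds/\Vol(S_s^w)\right)^{-1}$ and, after pulling $V_0$ out of the integral, the second stated expression. There is no serious obstacle: the only step that warrants care is the justification of the radial reduction for $\Psi$; once that is in place, everything reduces to a one-variable ODE and a direct integration over the foliation by geodesic spheres.
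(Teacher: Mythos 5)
Your proof is correct. There is nothing internal to compare it against: the paper does not prove this proposition but simply cites Grigor'yan \cite{GriCap}; your derivation (radial reduction via averaging over the isotropy group plus uniqueness, the ODE $(w^{n-1}\psi')'=0$, and evaluation of the energy by the coarea formula over the spheres $S^w_r$) is the standard argument, and it reproduces exactly the equilibrium potential $\Psi^w_{\rho,R}(t)=\int_\rho^t \C(A^w_{\rho,R})/\Vol(S^w_s)\,ds$ that the paper records immediately after the proposition. Two small remarks. First, you compute the Dirichlet integral $\int \Vert\nabla u\Vert^2\,d\mu$, whereas the paper's displayed definition of capacity reads $\int \Vert\nabla u\Vert\,d\mu$ without the square; the missing exponent is a typo in the paper (with the first power the infimum would be a minimal separating area and the harmonic $\Psi$ would not be the minimizer), so your reading is the intended one, and it is the one consistent both with the cited formula and with the flux identity $\C(A_{\rho,R})=\int_{\partial D_\rho}\vert\nabla\Psi\vert\,d\sigma$ used later in the paper. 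Second, the constant you call $V_0$ (the volume of the unit $(n-1)$-sphere, matching the paper's convention $\Vol(S^w_r)=V_0\,w^{n-1}(r)$) is what the proposition denotes by $V_n$; that discrepancy is the paper's notational inconsistency, not an error on your side.
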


We note that the radial function $\Psi:M_w^m\to \erre$ given by
\begin{equation}
\Psi(p):=\Psi^w_{\rho,R}(r(p)) \quad ,
\end{equation}
being
\begin{equation}
\Psi^w_{\rho,R}(t)=\int_\rho ^t \frac{\C(A^w_{\rho,R})}{\Vol(S^w_s)}ds\quad,
\end{equation}
is the solution to the Dirichlet problem given in \ref{Dir-capa} for
the annular region $A^w_{\rho, R}$, namely
\begin{equation}
\begin{cases}
\Delta^{M_w^m} \Psi=0\\
\Psi\vert_{S^w_\rho}=0\\
\Psi\vert_{S^w_R}=1
\end{cases}
\end{equation}

Another important tool in this paper is the comparison result for the mean exit time. Let now $E_R^w$ denote the mean time of the first exit from $B_R^w$ for a Brownnian particle starting at $o_w$. A remark due to Dynkin in \cite{Dynkin} claims that $E_R^w$ is the continuous solution to the following Poisson equation with Dirichlet boundary data,
\begin{equation}
\begin{aligned}
\Delta^{M_w^n}E_R^w=&-1\\
E_R^w\vert_{S_R^w}=& 0.
\end{aligned}
\end{equation}

Since the ball $B_R^w$ has maximal isotropy at the center $o_{w}$, so we have that $E_R^w$ only depends on the extrinsic distance $r$. Therefore, we will write $E_R^w=E_R^w(r)$ and

\begin{proposition}[See \cite{MP2}] Let $M_w^n$ be a model space of dimension $n$ then
\begin{equation}
E^w_R(r)=\int_r^Rq_w(t)dt
\end{equation}
\end{proposition}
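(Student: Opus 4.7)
The plan is to invoke the rotational symmetry of the ball $B_R^w$ about its pole $o_w$ to reduce the Dynkin characterization to a one-dimensional ODE, and then verify directly that the proposed integral is its solution. The text asserts that $E_R^w$ is the unique continuous solution of $\Delta^{M_w^n}u=-1$ on $B_R^w$ with $u\vert_{\partial B_R^w}=0$. Because the model metric $g_{M_w^n}$ and the ball $B_R^w$ are invariant under the full isotropy group at $o_w$, the function $E_R^w$ must be radial; write $E_R^w=\psi\circ r$ with $\psi:[0,R]\to\erre$ and $\psi(R)=0$.

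Substituting into the radial Laplacian formula
\[
\Delta^{M_w^n}(\psi\circ r) = \psi''(r) + (n-1)\,\eta_w(r)\,\psi'(r)
\]
reduces the Poisson equation to $\psi''+(n-1)\eta_w\psi'=-1$, which in Sturm--Liouville form reads
\[
\bigl(w(r)^{n-1}\psi'(r)\bigr)' = -w(r)^{n-1}.
\]
One integration from $0$ to $r$, in which the boundary term vanishes because $w(0)=0$, gives $\psi'(r)=-\int_0^r w(t)^{n-1}\,dt\,/\,w(r)^{n-1}=-q_w(r)$ (the common factor $V_0$ in $\Vol(B_r^w)$ and $\Vol(S_r^w)$ cancels). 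A second integration from $r$ to $R$ with $\psi(R)=0$ yields exactly $\psi(r)=\int_r^R q_w(t)\,dt$.

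The one step that needs genuine care, rather than routine calculation, is checking that the candidate $\int_r^R q_w(t)\,dt$ extends to a smooth function on $M_w^n$ near $o_w$, so that uniqueness for the Poisson--Dirichlet problem identifies it with $E_R^w$. This is where the model-space normalisations $w(0)=0$, $w'(0)=1$ from Definition~\ref{defModel} enter: they force $w(t)\sim t$ and hence $q_w(t)\sim t/n$ as $t\to 0$, so that $\psi$ is an even smooth function of $r$ and descends to a smooth radial function on all of $M_w^n$. This regularity check at the pole, together with the compatible vanishing of the $w(0)^{n-1}$ boundary term when inverting the integrating factor, is the only mildly subtle ingredient; the ODE computation itself is mechanical.
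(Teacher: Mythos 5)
Your proof is correct and takes essentially the route the paper relies on: the paper does not prove this proposition itself (it is quoted from \cite{MP2}), but its preliminaries supply exactly your three ingredients---Dynkin's characterization of $E_R^w$ as the continuous solution of the Poisson--Dirichlet problem, radiality of $E_R^w$ from the maximal isotropy at $o_w$, and the radial Laplacian formula---and your Sturm--Liouville integration $\bigl(w^{n-1}\psi'\bigr)'=-w^{n-1}$, with the boundary term killed by $w(0)=0$ and $V_0$ cancelling in $q_w$, is the standard derivation. Your regularity check at the pole, which is what lets uniqueness identify the candidate with $E_R^w$, is indeed the only non-mechanical point and you handle it correctly.
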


\section{Proof of the main theorems of \S \ref{Results}}\label{proof-main}
\subsection{Proof of theorem \ref{isoperimetric}}

Since the mean time function $E_R^w$ is a radial function, we can transplant it to $P$ using the extrinsic distance, hence, we also denote as $E_R^w:P\to \erre$ the function given by $E_R^w(x)=E_R^w(r(x))$. To compare the mean exit time function, we need  the following comparison for the mean exit time

\begin{proposition}(\cite{MP2}) Let $\varphi: P^m \longrightarrow N^n$ be an isometric, proper and minimal immersion of a complete non-compact Riemannian $m$-manifold $P^m$ into a complete Riemannian manifold $N^n$ with a pole $o\in N$.  Let us suppose that the $o-$radial sectional curvatures of $N$ are bounded from above by
$$K_{o,N}(\sigma_{x}) \leq  -\frac{w''(r)}{w(r)}(\varphi(x))\,\,\,\forall x \in P\, ,$$ and the model space $M_w^m$ is balanced from below, then
\begin{equation}\label{Laplacian-exit}
\Delta^PE^w_R\leq -1=\Delta^PE_R.
\end{equation}
\end{proposition}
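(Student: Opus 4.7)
The plan is to transplant $E_R^w$ to $P$ as a radial function of the extrinsic distance $r$ and then combine the Laplacian comparison of Proposition \ref{lap-comp} with the minimality of $\varphi$ and the balance-from-below hypothesis.

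First, I would record the derivatives of $E_R^w(r)=\int_r^R q_w(t)\,dt$. Using $q_w'=1-(m-1)q_w\eta_w$ one obtains
\begin{equation*}
(E_R^w)'(r)=-q_w(r)\leq 0,\qquad (E_R^w)''(r)=-1+(m-1)q_w(r)\eta_w(r),
\end{equation*}
so that in the model space itself $\Delta^{M_w^m}E_R^w=(E_R^w)''+(m-1)(E_R^w)'\eta_w=-1$. The right-hand equality $\Delta^P E_R=-1$ in the statement is just Dynkin's equation for the intrinsic mean exit time of the extrinsic ball $D_R\subset P$ recalled in \S \ref{Prelim}, so the task reduces to showing $\Delta^P E_R^w\leq -1$.

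Second, I would apply the Hessian comparison. For a minimal immersion, $H_P=0$, and the standard formula for the Laplacian of a radial composition on an immersed submanifold gives
\begin{equation*}
\Delta^P E_R^w=(E_R^w)''\,|\nabla^P r|^2+(E_R^w)'\sum_{i=1}^{m}\hess^N r(e_i,e_i),
\end{equation*}
where $\{e_i\}$ is an orthonormal frame of $T_pP$. The hypothesis $K_{o,N}\leq -w''/w$ supplies $\sum_i\hess^N r(e_i,e_i)\geq \eta_w(r)\bigl(m-|\nabla^P r|^2\bigr)$, but since the multiplying factor $(E_R^w)'$ is non-positive the inequality flips, yielding
\begin{equation*}
\Delta^P E_R^w \,\leq\, |\nabla^P r|^2\bigl((E_R^w)''-(E_R^w)'\eta_w\bigr)+m\,(E_R^w)'\,\eta_w.
\end{equation*}
The balance-from-below condition $q_w\eta_w\geq 1/m$ gives $(E_R^w)''-(E_R^w)'\eta_w=-1+m\,q_w\eta_w\geq 0$, so the inequality $|\nabla^P r|^2\leq 1$ may be used in the same direction to replace $|\nabla^P r|^2$ by $1$ in the first summand. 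The right-hand side then collapses to $(E_R^w)''+(m-1)(E_R^w)'\eta_w=\Delta^{M_w^m}E_R^w=-1$, which is the claim.

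The only delicate point is the sign of $(E_R^w)'$: Proposition \ref{lap-comp} is phrased for functions with non-negative radial derivative, so one must apply it to $-E_R^w$, or equivalently keep track of how the Hessian comparison reverses when multiplied by a non-positive factor. It is precisely the balance-from-below hypothesis that forces $(E_R^w)''-(E_R^w)'\eta_w\geq 0$, which in turn makes the loss $|\nabla^P r|^2\leq 1$ point in the right direction; without this ingredient the final step of the estimate would go the wrong way and one could not recover the clean bound $\Delta^P E_R^w\leq -1$.
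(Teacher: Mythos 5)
Your proof is correct. The paper does not actually prove this proposition—it imports it from \cite{MP2}—but your argument (transplanting $E_R^w$ via the extrinsic distance, computing $(E_R^w)'=-q_w\leq 0$ and $(E_R^w)''=-1+(m-1)q_w\eta_w$, applying the Greene--Wu Hessian comparison with $H_P=0$, and then using balance from below so that $(E_R^w)''-(E_R^w)'\eta_w=-1+mq_w\eta_w\geq 0$ lets $|\nabla^P r|^2\leq 1$ work in the right direction) is exactly the argument of that reference and of the toolkit the paper assembles in \S \ref{Prelim}. Your closing observation is also the genuinely delicate point: Proposition \ref{lap-comp} is stated for $f'\geq 0$ and so cannot be quoted verbatim for $E_R^w$; deriving the reversed inequality directly from the Hessian comparison, as you do, is the correct way to handle it.
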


Applying now the divergence theorem to inequality (\ref{Laplacian-exit}) we obtain

\begin{equation}
\begin{aligned}\label{anterior}
-\Vol(D_R)=&\int_{D_R}\Delta^PE^P_R(r)d\mu\geq \int_{D_R}\Delta^PE^w_R(r)d\mu\\
=&\int_{\partial D_R}E^w_R(r)'\langle \nabla^Pr, \nu\rangle d\sigma=-q_w(R) \int_{\partial D_R}\Vert \nabla^Pr\Vert d\sigma
\end{aligned}
\end{equation}

Therefore,

\begin{equation}\label{isoper-ineq}
 \frac{\Vol(D_R)}{\Vol(B^w_R)}\leq
 \frac{J_r(R)}{\Vol(S^w_R)}=\frac{J_r(R)}{J_r^w(R)}\quad .
\end{equation}

Observe that equality in inequality (\ref{isoper-ineq}) implies equality in inequality  (\ref{anterior}) and therefore, in inequality  (\ref{Laplacian-exit}). Taking, thus, into account that $E_R^P=E_R ^w$ in $x\in \partial D_R$, $\Delta E_R^P=\Delta E_R ^w $ in $x\in D_R$, and the maximum principle, we obtain that equality in (\ref{isoper-ineq}) implies
\begin{equation}
E_R^P=E_R^w\quad,
\end{equation}
for all  $x\in D_R$.

In order to obtain the monotonicity of the quotient $\frac{\Vol(D_R)}{\Vol(B^w_R)}$, we note that by the co-area formula we get:
\begin{equation}\label{monotonicity}
\begin{aligned}
\left(\ln \frac{\Vol(D_R)}{\Vol(B^w_R)}\right)'=& \frac{\int_{\partial D_R}\frac{1}{\Vert \nabla^Pr\Vert} d\sigma}{\Vol(D_R)}-\frac{\Vol(S_R^w)}{\Vol(B_R^w)}\\
\geq & \frac{\int_{\partial D_R}\Vert \nabla^Pr\Vert d\sigma}{\Vol(D_R)}-\frac{\Vol(S_R^w)}{\Vol(B_R^w)}\\
= & \frac{\Vol(S_R^w)}{\Vol(D_R)}\left(\frac{J_r(R)}{\Vol(S_R^w)}-\frac{\Vol(D_R)}{\Vol(B_R^w)}\right)\\\geq & 0\quad.
\end{aligned}
\end{equation}

Hence $\frac{\Vol(D_R)}{\Vol(B^w_R)}$ is a monotone non-decreasing function. To prove that also $\frac{J_r(R)}{J_r^w(R)}$ is a monotone nondecreasing function we need the following lemma
\begin{lemma}
\begin{equation}
\text{div }\left(\frac{\nabla ^P E^w_R(r)}{\Vol(B_r^w)}\right)\leq 0\quad.
\end{equation}
\end{lemma}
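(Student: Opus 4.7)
The plan is to expand the divergence by the Leibniz rule, compute each piece explicitly using the chain rule applied to the radial functions $E_R^w(r)$ and $\Vol(B_r^w)$, and then invoke the mean exit time comparison \eqref{Laplacian-exit} together with the Pythagorean identity $\|\nabla^P r\|^2 \leq 1$ that follows from \eqref{radiality}.

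Concretely, I would first write
\begin{equation*}
\operatorname{div}\!\left(\frac{\nabla^P E_R^w(r)}{\Vol(B_r^w)}\right) = \frac{\Delta^P E_R^w(r)}{\Vol(B_r^w)} + \left\langle \nabla^P E_R^w(r),\, \nabla^P\!\left(\frac{1}{\Vol(B_r^w)}\right)\right\rangle.
\end{equation*}
Using $E_R^w(r) = \int_r^R q_w(t)\,dt$ one has $(E_R^w)'(r) = -q_w(r) = -\Vol(B_r^w)/\Vol(S_r^w)$, so $\nabla^P E_R^w(r) = -q_w(r)\,\nabla^P r$. Similarly, since $\frac{d}{dr}\frac{1}{\Vol(B_r^w)} = -\frac{\Vol(S_r^w)}{\Vol(B_r^w)^2} = -\frac{1}{q_w(r)\Vol(B_r^w)}$, we get $\nabla^P\!\left(1/\Vol(B_r^w)\right) = -\frac{1}{q_w(r)\Vol(B_r^w)}\nabla^P r$. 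Multiplying these gradients yields a cross term that simplifies cleanly to $\frac{\|\nabla^P r\|^2}{\Vol(B_r^w)}$, so that
\begin{equation*}
\operatorname{div}\!\left(\frac{\nabla^P E_R^w(r)}{\Vol(B_r^w)}\right) = \frac{\Delta^P E_R^w(r) + \|\nabla^P r\|^2}{\Vol(B_r^w)}.
\end{equation*}

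It then remains to show that the numerator is non-positive. By the comparison inequality \eqref{Laplacian-exit} already established under the balance-from-below hypothesis, $\Delta^P E_R^w(r) \leq -1$, and by decomposition \eqref{radiality} one has $\|\nabla^P r\|^2 \leq \|\nabla^N r\|^2 = 1$. Combining these two bounds gives $\Delta^P E_R^w(r) + \|\nabla^P r\|^2 \leq -1 + 1 = 0$, and since $\Vol(B_r^w) > 0$, the desired inequality follows.

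I do not expect a serious obstacle here; the main subtlety is keeping track of the signs (note that $(E_R^w)' \leq 0$, so any direct invocation of Proposition \ref{lap-comp} has to be done with care or, as above, avoided in favor of the already-proved \eqref{Laplacian-exit}), and noting that the role of the balance-from-below assumption is hidden inside \eqref{Laplacian-exit} via the inequality $m\,q_w\eta_w \geq 1$. The Pythagorean bound $\|\nabla^P r\|^2 \leq 1$ is exactly the slack needed between the sharp pointwise bound on $\Delta^P E_R^w$ and the divergence expression.
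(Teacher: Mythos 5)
Your proposal is correct and follows essentially the same route as the paper: expand the divergence by the product rule, simplify the cross term to $\frac{\Vert \nabla^P r\Vert^2}{\Vol(B_r^w)}$ using $\left(E_R^w\right)'=-q_w$ and $\Vol(B_r^w)'=\Vol(S_r^w)$, then combine the mean exit time comparison $\Delta^P E_R^w(r)\leq -1$ with $\Vert \nabla^P r\Vert^2\leq 1$. Your remark about avoiding Proposition \ref{lap-comp} (whose hypothesis $f'\geq 0$ fails here) in favor of the already-established inequality \eqref{Laplacian-exit} is exactly how the paper proceeds as well.
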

\begin{proof}
Taking into account  the product rule for the divergence and the mean exit time comparison result
\begin{equation}\label{nonpositivediv}
\begin{aligned}
\text{div }\left(\frac{\nabla ^P E^w_R(r)}{\Vol(B_r^w)}\right)=&\frac{\Delta^PE_R^w(r)}{\Vol(B_r^w)}-\frac{\Vol(B_r^w)'}{\Vol(B_r^w)^2}\langle \nabla^P r, \nabla^P E_R^w(r) \rangle\\
=&\frac{\Delta^PE_R^w(r)}{\Vol(B_r^w)}-\frac{\Vol(S_r^w)}{\Vol(B_r^w)^2}E_R^w(r)' \Vert \nabla^P r\Vert^2 \\\leq& \frac{-1}{\Vol(B_r^w)}+\frac{\Vert \nabla^P r\Vert^2}{\Vol(B_r^w)}\leq 0  \quad.
\end{aligned}
\end{equation}
\end{proof}

Using now this lemma and the divergence theorem in the extrinsic annulus $A_{\rho,R}$ for $\rho<R$
\begin{equation}\label{negdiv}
\begin{aligned}
0\geq &\int_{A_{\rho,R}} \text{div }\left(\frac{\nabla ^P E^w_R(r)}{\Vol(B_r^w)}\right)d\mu\\
 = & \int_{\partial D_R}\frac{E^w_R(r)' \Vert \nabla^P r\Vert}{\Vol(B_r^w)}d\sigma-\int_{\partial D_\rho}\frac{E^w_R(r)' \Vert \nabla^P r\Vert}{\Vol(B_r^w)}d\sigma\\
=&-\frac{J_r(R)}{\Vol(S_R^w)}+\frac{J_r(\rho)}{\Vol(S_\rho^w)}\quad.
\end{aligned}
\end{equation}
Therefore,
\begin{equation}
\frac{J_r(R)}{J^w_r(R)}\geq \frac{J_r(\rho)}{J_r^w(\rho)}\quad,
\end{equation}
for any $R>\rho$, and  the theorem is proven.

\subsection{Proof of theorem \ref{capacity-baix} }

The corresponding Dirichlet problem for the capacity of the extrinsic
annulus $A_{\rho,R}$ is
\begin{equation}
\begin{cases}
\Delta^P \Psi=0\\
\Psi\vert_{\partial D_\rho}=0\\
\Psi\vert_{\partial D_R}=1
\end{cases}
\end{equation}
 
Let us transplant the function $\Psi^w_{\rho,R}$ with the extrinsic
distance function $r$:
\begin{equation}
\Psi^w(p):A_{\rho,R}\to \erre, \quad p\to\Psi^w(p):=\Psi^w_{\rho,R}(r(p))\quad.
\end{equation}
Then, applying proposition \ref{lap-comp}
\begin{equation}
\Delta^P \Psi^w\geq m \left(1-\vert \nabla^P
  r\vert\right)\left((\Psi^w_{\rho,R})'\cdot \eta_w\right)\circ r \quad .
\end{equation}
Taking into account that $\eta_w\geq 0$
\begin{equation}
\Delta^P \Psi^w\geq 0=\Delta^P \Psi\quad.
\end{equation}
Since $\Delta^P\left(\Psi^w-\Psi\right)\geq 0$ and since
$\Psi_{\partial A_{\rho,R}}=\Psi^w_{\partial A_{\rho,R}}$, we have by the
Maximum Principle that $\Psi^w\leq \Psi$ on $A_{\rho,R}$, and, since
$\Psi_{\partial D_\rho}=\Psi^w_{\partial D_\rho}=0$, we obtain
\begin{equation}
\vert \nabla^P \Psi^w\vert \leq \vert \nabla ^P \Psi \vert \quad
\text{ on } \partial D_\rho \quad .
\end{equation}
Finally, we can estimate the capacity
\begin{equation}
\begin{aligned}
\C(A_{\rho,R})&=\int_{\partial D_\rho} \vert \nabla^P \Psi \vert d\sigma\\
&\geq \int_{\partial D_\rho}\vert \nabla^P\Psi^w \vert d\sigma\\
&=(\Psi^w(\rho))'\int _{\partial D_\rho}\vert \nabla^P r \vert d\sigma\\
&=\C(A_{\rho,R}^w)\frac{J_r(\rho)}{J^w_r(\rho)} \quad ,
\end{aligned}
\end{equation}
and the theorem follows.

\subsection{Proof of theorem \ref{capacity}}

With the flux we can provide an upper bound for the capacity (see inequality (\ref{capa-gry}) ). Using theorem \ref{isoperimetric} we obtain that
\begin{equation}
\begin{aligned}
\C(A_{\rho,R})\leq & \frac{1}{\int_\rho^R\frac{ds}{\int_{\partial D_s}\Vert \nabla^P r\Vert d\sigma}}=\frac{1}{\int_\rho^R\frac{ds}{\frac{J_r(s)}{\Vol(S_s^w)}\Vol(S_s^w)}}\\
\leq  & \frac{\frac{J_r(R)}{\Vol(S_R^w)}}{\int_\rho^R\frac{ds}{\Vol(S_s^w)}}=\frac{J_r(R)}{\Vol(S_R^w)}\C (A^w_{\rho,R}).
\end{aligned}
\end{equation}

For the bounds from below, see \cite{MPcapacity}. Observe moreover that equality in the above inequality implies that
\begin{equation}
\int_t^R \left(  \frac {\frac{J_r(R)}{\Vol(S^w_R)}}{\frac{J_r(s)}{\Vol(S^w_s)}}-1\right) \frac{1}{\Vol(S_s^w)}ds=0.
\end{equation}

Therefore
\begin{equation}
\frac{J_r(R)}{\Vol(S^w_R)}=\frac{J_r(s)}{\Vol(S^w_s)},
\end{equation}
for any $s\in [\rho,R]$. Then, by inequality (\ref{negdiv})
\begin{equation}
 \text{div }\left(\frac{\nabla ^P E^w_R(r)}{\Vol(B_r^w)}\right)=0,
\end{equation}
for any $p\in A_{\rho,R}$. From inequality (\ref{nonpositivediv})

\begin{equation}
 \Vert \nabla^P r \Vert =1,
\end{equation}
for any $p\in A_{\rho,R}$, and hence, $D_R$ is a minimal cone.

\section{Proof of Theorem \ref{flat-ends} and Corollary \ref{col-ends} }\label{Proof-into}
This proof mimics the argument given in \cite[Theorem 2]{Tkachev}, so we merely give a sketch emphasizing the points where the line of reasoning from \cite{Tkachev} is modified to hold in the present more general setting.

First of all, note that we can construct the following order-preserving bijection
$$
F:\erre^+  \to \erre^+,\quad F(t)=\int_0^t w(s)ds\quad .
$$

Since $\varphi:P^m\to N^n$ is a complete proper and minimal immersion within a manifold with a pole $N^n$, applying proposition \ref{lap-comp} we have
\begin{equation}\label{anterior-2}
\Delta^P F\circ r\geq m w'\circ r\quad .
\end{equation}
Hence, by using the assumption $w'>0$,  the extrinsic distance has no local maximum. Therefore for any $R$, $P^m\setminus D_R$ has no bounded components, being each component of $P^m\setminus D_R$  non compact, and the number of ends $\mathcal{E}_{D_R}(P)$ with respect to $D_R$ is the number of connected components of $P^m\setminus D_R$.

Let us denote  by $\displaystyle\left\{\Omega^i\right\}_{i=1}^{\mathcal{E}_{D_R}(P)}$ the set of $\mathcal{E}_{D_R}(P)$ connected components of $P^m\setminus D_{R}$ (every one of them is a  minimal submanifold with boundary).
Now we need the following lemma

\begin{lemma}For any connected component $\Omega_i$ of $P^m\setminus D_R$ the volume of the set
$$
D_t^{\Omega_i}=D_t\cap \Omega_i \quad .
$$
is bounded from below by
\begin{equation}\label{equation-lemma}
\Vol(D_t^{\Omega_i})\geq V_m \left(\frac{t-R}{2}\right)^m\quad .
\end{equation}
\end{lemma}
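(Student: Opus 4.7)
The plan is to adapt the monotonicity-formula argument of \cite[Theorem 2]{Tkachev} to the curved ambient $N$. The restriction $R>R_0$ imposed in Theorem \ref{flat-ends} is precisely what makes this possible: by the curvature hypothesis, every sectional curvature of $N$ is $\leq 0$ throughout the shell $\{R<r_o<t\}\subset N$, and this is the region in which all of the geometry will take place.

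I would begin by selecting a centre point. Inequality (\ref{anterior-2}) together with $w'>0$ shows that $F\circ r$ is strictly subharmonic on $P$, so that $r\circ\varphi$ has no interior local maximum and each component $\Omega_i$ is unbounded. Applying the intermediate value theorem to $r\circ\varphi$ along a continuous curve in $\overline{\Omega_i}$ that joins $\partial D_R$ to infinity, one obtains $p_i\in\overline{\Omega_i}$ with $r(\varphi(p_i))=(t+R)/2$. Writing $q_i=\varphi(p_i)$ and $s=(t-R)/2$, the triangle inequality in $N$ gives $B_s^N(q_i)\subset\{R<r_o<t\}$, whence $\varphi^{-1}(B_s^N(q_i))\subset D_t\setminus\overline{D_R}$. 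Let $W$ be the connected component of $\varphi^{-1}(B_s^N(q_i))$ that contains $p_i$; being connected, lying in $P\setminus\overline{D_R}$, and meeting $\overline{\Omega_i}$, it must be contained in $D_t^{\Omega_i}$.

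The core step is a monotonicity formula for $W$ centred at $q_i$. Every minimising geodesic in $N$ issuing from $q_i$ of length $<s$ stays inside the shell by the same triangle inequality, so sectional curvatures of $N$ are non-positive along such geodesics; Hessian comparison with the Euclidean model then gives $\Hess^N(d_{q_i}^2/2)\geq g_N$ on $B_s^N(q_i)$, and the minimality of $\varphi$ yields $\Delta^P(d_{q_i}^2/2)\geq m$ on $W$ after tracing over $TP$. A standard divergence/coarea computation then proves that $\tau\mapsto\Vol(W\cap B_\tau^N(q_i))/\tau^m$ is non-decreasing on $(0,s]$, with limit at least $V_m$ as $\tau\to 0^+$ since $p_i$ is a smooth interior point of $W$; evaluating at $\tau=s$ gives $\Vol(W)\geq V_m s^m$, and $W\subset D_t^{\Omega_i}$ finishes the proof. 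The main technical hurdle is justifying this monotonicity cleanly: one must ensure that $d_{q_i}$ has no interior conjugate points on $B_s^N(q_i)$ (granted by non-positive curvature) and that $\partial W$ is entirely contained in $\varphi^{-1}(\partial B_s^N(q_i))$, so that no spurious boundary contribution involving $\partial D_R$ enters the integration by parts. Both points are routine in the present setting, but together they are precisely what forces the hypothesis $R>R_0$.
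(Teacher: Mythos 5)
Your argument is correct and is essentially the paper's own proof: both choose a point of $\Omega_i$ at extrinsic distance $(t+R)/2$, observe via the triangle inequality that the ambient ball of radius $(t-R)/2$ about its image lies in the shell $\{R<r_o<t\}$, where the hypothesis $R>R_0$ makes all sectional curvatures of $N$ non-positive, and conclude by a Euclidean-model monotonicity-plus-density argument that the portion of the component inside that ball has volume at least $V_m\left(\frac{t-R}{2}\right)^m$. The only difference is presentational: the paper invokes claim (2) of Theorem \ref{isoperimetric} with the comparison function $w_0(r)=r$ as a black box, whereas you re-derive that monotonicity directly from the Hessian comparison and $\Delta^P\bigl(d_{q_i}^2/2\bigr)\geq m$, which is the same comparison machinery unpacked.
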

\begin{proof}
Now pick a point $o'\in D_t^{\Omega_i}$ such that its extrinsic distance is $r_o(o')=\frac{R+t}{2}$, then the extrinsic ball $D^{\Omega_i}_{\frac{t-R}{2}}(o')$ in $\Omega_i$ centered at $o'$ with radius  $\frac{t-R}{2}$ satisfies
\begin{equation}
D^{\Omega_i}_{\frac{t-R}{2}}(o') \subset D_t^{\Omega_i}\quad .
\end{equation}
Hence,
\begin{equation}
  \Vol(D_t^{\Omega_i})\geq \Vol(D^{\Omega_i}_{\frac{t-R}{2}}(o'))\quad.
 \end{equation}
 
 Since  $r(o')>R$ and the sectional curvatures of any tangent $2-$plane of the tangent space at every point in the geodesic ball $B^{N}_{\frac{t-R_0}{2}}(\varphi(o'))$ of the ambient manifold are non-positive, we can make use of the behavior of the volume quotient  (claim (2) in theorem \ref{isoperimetric}) for extrinsic balls to  the immersion $\varphi: D^{\Omega_i}_{\frac{t-R_0}{2}}(o') \to B^{N}_{\frac{t-R_0}{2}}(\varphi(o'))$ with the new model comparison $w(r)=w_0(r)=r$ (namely $M_w^m=\erre^m$)
 
\begin{equation}
\frac{\Vol(D^{\Omega_i}_{\frac{t-R}{2}}(o'))}{V_m \left(\frac{t-R}{2}\right)^m}\geq \lim_{s\to 0}\frac{\Vol(D^{\Omega_i}_{s}(o'))}{V_m s^m}\geq 1\quad .
\end{equation}
And the lemma is proved.
\end{proof}

Summing now in inequality (\ref{equation-lemma}) we obtain
\begin{equation}
\Vol(A_{R,t})=\sum_{i=1}^{\mathcal{E}_{D_R}(P)}\Vol(D_t^{\Omega_i})\geq \mathcal{E}_{D_R}(P) V_m \left(\frac{t-R}{2}\right)^m\quad .
\end{equation}
Taking into account that $\Vol(A_{R,t})\leq \Vol(D_t)$ and dividing by $\Vol(B_t^w)$ we obtain
\begin{equation}
\frac{\Vol(D_t)}{\Vol(B_t^w)}\geq \mathcal{E}_{D_R}(P) V_m \frac{\left(\frac{t-R}{2}\right)^m}{\Vol(B_t^w)}\quad .
\end{equation}
We can split the last quotient by division and multiplication by $t^m$
\begin{equation}
\frac{\Vol(D_t)}{\Vol(B_t^w)}\geq \mathcal{E}_{D_R}(P) V_m \left(\frac{1-R/t}{2}\right)^m\frac{t^m}{\Vol(B_t^w)}\quad .
\end{equation}
Hence, finally, using the explicit expression for $ \Vol(B_t^w)$ the theorem follows.

In order to prove corollary \ref{col-ends}, note that  by the maximum principle $\mathcal{E}_{D_R}^P$ is a non-decreasing function with respect to $R$. By inequality (\ref{resul1}) and the assumptions of the corollary we can conclude that $\mathcal{E}_{D_R}^P$ is stabilized, i.e. $\mathcal{E}_{D_R}^P= \text{ constant for sufficient large }R$.

Now let $F\subset P$ be an arbitrary compact subset. Using again the maximum principle of the immersion, we conclude that $\mathcal{E}_F(P)$ is a non-decreasing function of the compact set $F$ (namely, if $F_1\subset F_2$ then $\mathcal{E}_{F_1}(P)\leq\mathcal{E}_{F_2}(P)$). Taking into account that  for any compact set $K$ there exist $R_K$ such that $K\subset D_{R_K}$, we finally obtain
\begin{equation}
\mathcal{E}(P)=\lim_{R\to \infty} \mathcal{E}_{D_R}(P)\quad,
\end{equation}
and the corollary follows.

\section{Corollaries and application of the extrinsic comparison theory }\label{corollaries}

\subsection{Relation between $w-$volume and $w-$flux of submanifolds}

Under the hypotesis of theorem \ref{isoperimetric}, if the submanifold has finite $w$-flux, the submanifold has finite $w$-volume. But in particular settings we can also state a reverse:

\begin{theorem}\label{volume-flux-theo} Let $\varphi: P^m \longrightarrow N^n$ be an isometric, proper, and minimal immersion of a complete non-compact Riemannian $m$-manifold $P^m$ into a complete Riemannian manifold $N^n$ with a pole $o\in N$ .  Let us suppose that the $o-$radial sectional curvatures of $N$ are bounded from above by
$$K_{o,N}(\sigma_{x}) \leq  -\frac{w''(r)}{w(r)}(\varphi(x))\,\,\,\forall x \in P\, .$$
Suppose that the model space $M_w^m$ is balanced from bellow with warping function satisfying
$$
w'(r)\geq 0\quad\forall r\in \erre_+.
$$
Then, if the submanifold has finite $w$-volume, we have:
\begin{enumerate}
\item  The submanifold has finite $w$-flux.
\item $\text{Flux}_{w}(P)=\Vol_{w}(P)$.
\end{enumerate}
\end{theorem}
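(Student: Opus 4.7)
Write $A(R)=\Vol(D_R)/\Vol(B_R^w)$ and $B(R)=J_r(R)/J_r^w(R)$. Theorem~\ref{isoperimetric} tells us that both functions are non-decreasing in $R$ and that $A(R)\le B(R)$ pointwise; hence $\Vol_w(P)=\lim_{R\to\infty}A(R)$, $\text{Flux}_w(P)=\lim_{R\to\infty}B(R)$, and $\Vol_w(P)\le \text{Flux}_w(P)$. Since by hypothesis $\Vol_w(P)<\infty$, it suffices to prove the reverse inequality $\text{Flux}_w(P)\le \Vol_w(P)$; this will simultaneously yield finiteness of the flux (claim~(1)) and equality (claim~(2)).

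The bridge between flux and volume that I would exploit is the co-area formula together with the elementary pointwise bound $|\nabla^P r|\le |\nabla^N r|=1$, which follows from the orthogonal decomposition in equation~(\ref{radiality}). For any $0<\rho<R$,
\begin{equation*}
\int_\rho^R J_r(t)\,dt \;=\; \int_\rho^R\!\!\int_{\partial D_t}|\nabla^P r|\,d\sigma\,dt \;=\; \int_{A_{\rho,R}}|\nabla^P r|^2\,d\mu \;\le\; \Vol(A_{\rho,R})\;\le\;\Vol(D_R).
\end{equation*}
On the other hand, the monotonicity of $B$ furnished by Theorem~\ref{isoperimetric} gives $J_r(t)\ge B(\rho)\,J_r^w(t)=B(\rho)\,\Vol(S_t^w)$ for $t\ge\rho$, and integrating yields
\begin{equation*}
\int_\rho^R J_r(t)\,dt \;\ge\; B(\rho)\bigl(\Vol(B_R^w)-\Vol(B_\rho^w)\bigr).
\end{equation*}

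Combining the two estimates and dividing by $\Vol(B_R^w)$,
\begin{equation*}
B(\rho)\left(1-\frac{\Vol(B_\rho^w)}{\Vol(B_R^w)}\right)\;\le\;\frac{\Vol(D_R)}{\Vol(B_R^w)}=A(R).
\end{equation*}
Letting $R\to\infty$ and then taking the supremum over $\rho$ gives $\text{Flux}_w(P)\le \Vol_w(P)$, as desired. The auxiliary fact $\Vol(B_R^w)\to\infty$ is guaranteed by the hypothesis $w'\ge 0$: for any $\rho_0>0$, $w$ is positive at $\rho_0$ and non-decreasing, so $w(s)^{m-1}\ge w(\rho_0)^{m-1}>0$ on $[\rho_0,R]$, whence $\Vol(B_R^w)=V_0\int_0^R w(s)^{m-1}ds$ diverges at least linearly in $R$.

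\textbf{Main obstacle.} The entire content of the proof is packed into the co-area identity $\int_\rho^R J_r(t)\,dt=\int_{A_{\rho,R}}|\nabla^P r|^2\,d\mu$ and the conversion to a pure volume estimate via $|\nabla^P r|\le 1$; finding this bridge between the flux and volume quotients is the only non-routine step. Once it is in place, the monotonicity supplied by Theorem~\ref{isoperimetric} makes the limit $R\to\infty$ automatic, and the residual hypotheses ($w'\ge 0$, balance from below) enter only to ensure, respectively, that $\Vol(B_R^w)\to\infty$ and that Theorem~\ref{isoperimetric} is applicable.
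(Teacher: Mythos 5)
Your proof is correct, but it takes a genuinely different route from the paper's. The paper works with the \emph{differential} inequality already established in the proof of Theorem \ref{isoperimetric} (inequality (\ref{monotonicity})): writing $A(s)=\Vol(D_s)/\Vol(B_s^w)$ and $B(s)=J_r(s)/J_r^w(s)$, it shows $0\leq B(s)-A(s)\leq S\, s\, (\ln A)'(s)$, where the factor $s$ comes from a separate lemma asserting $q_w(s)\leq s$ when $w'\geq 0$ (Lemma \ref{bola-s}); then, since finiteness of $\Vol_w(P)$ forces $\int^\infty (\ln A)'(s)\,ds<\infty$, it extracts a sequence $t_i\to\infty$ along which $t_i(\ln A)'(t_i)<\epsilon$, and concludes $\text{Flux}_w(P)-\Vol_w(P)\leq S\epsilon$ for every $\epsilon>0$. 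You instead work at the \emph{integral} level: the co-area identity $\int_\rho^R J_r(t)\,dt=\int_{A_{\rho,R}}\vert\nabla^P r\vert^2\,d\mu$ together with $\vert\nabla^P r\vert\leq 1$ gives the upper bound $\Vol(D_R)$, while monotonicity of $B$ gives the lower bound $B(\rho)\bigl(\Vol(B_R^w)-\Vol(B_\rho^w)\bigr)$, and the sandwich closes after dividing by $\Vol(B_R^w)$. Your argument is more elementary: it avoids both the auxiliary lemma and the sequence-extraction step, and it uses the hypothesis $w'\geq 0$ only to guarantee $\Vol(B_R^w)\to\infty$, so in fact it proves the theorem under the weaker assumption that the model space has infinite volume, $\int_0^\infty w^{m-1}(s)\,ds=\infty$. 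What the paper's route buys in exchange is a pointwise quantitative estimate on the deficit $B(s)-A(s)$ in terms of the logarithmic derivative of the volume quotient, which is finer information than the limiting equality alone; it also recycles machinery already set up for Theorem \ref{isoperimetric} rather than introducing a new bridge between flux and volume. Both arguments rest equally on Theorem \ref{isoperimetric} for the inequality $A\leq B$ and the monotonicity of the two quotients.
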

\begin{proof}
To prove the theorem let us state the following metric property for geodesic balls and geodesic spheres in a rotationally symmetric model space
\begin{lemma}\label{bola-s}
Let $M^m_w$ be a model space with
$$
w'(r)\geq 0\quad\forall r\in \erre_+.
$$
Then
$$
q_w(s)=\frac{\Vol(B^w_S)}{\Vol(S^w_s)}\leq s.
$$
\end{lemma}
\begin{proof}
Observe that
\begin{equation}
q_w(0)=0,
\end{equation}
and, since $w'\geq 0$ ,
\begin{equation}
q_w'(t)\leq 1,\, \forall t\geq 0.
\end{equation}
Hence, by integrating the above inequality, the lemma follows.
\end{proof}

Now, since $P$ has finite $w$-volume, then there exists $S\in \erre^+$ such that
\begin{equation}
\frac{\Vol(D_R)}{\Vol(B_R^{w})}\leq \lim_{t\to \infty}\frac{\Vol(D_t)}{\Vol(B_t^{w})}=S<\infty\quad.
\end{equation}
By inequality (\ref{monotonicity})
\begin{equation}
\begin{aligned}
\left(\ln \frac{\Vol(D_s)}{\Vol(B^{w}_s)}\right)'\geq&\frac{\Vol(S_s^{w})}{S\Vol(B_s^{w})}\left(\frac{J_r(s)}{J_r^{w}(s)}-\frac{\Vol(D_s)}{\Vol(B_s^{w})}\right)\geq 0 \quad.
\end{aligned}
\end{equation}
Therefore, taking lemma \ref{bola-s} into acount we get:
\begin{equation}
\begin{aligned}\label{volume-flux}
0\leq \left(\frac{J_r(s)}{J_r^{w}(s)}-\frac{\Vol(D_s)}{\Vol(B_s^{w})}\right)\leq S\left(\ln \frac{\Vol(D_s)}{\Vol(B^{w}_s)}\right)' s\quad.
\end{aligned}
\end{equation}
But since
\begin{equation}
\lim_{t\to \infty}\frac{\Vol(D_t)}{\Vol(B_t^{w})}=\frac{\Vol(D_{R_1})}{\Vol(B_{R_1}^{w_b})}e^{\int_{R_1}^\infty\left(\ln \frac{\Vol(D_s)}{\Vol(B^{w}_s)}\right)' ds}=S<\infty\quad,
\end{equation}
then, for any $\epsilon>0$ there exist a sequence $\{t_i\}_{i=1}^\infty$ with $t_i\to \infty$ when $i\to \infty$, and $R_\epsilon$, such that
\begin{equation}
\left(\ln \frac{\Vol(D_{t_i})}{\Vol(B^{w}_{t_i})}\right)' t_i<\epsilon\quad.
\end{equation}
This holds for any $t_i>R_\epsilon$. Applying inequality (\ref{volume-flux}) taking into account the monotonicity of the flux and volume comparison quotients
\begin{equation}
0\leq \text{Flux}_{w_b}(P)-\Vol_{w_b}(P)\leq \epsilon\quad ,
\end{equation}
for any $\epsilon>0$ . Letting $\epsilon$ tend to $0$, the theorem is proven.
\end{proof}

\subsection{Intrinsic versions}

In this subsection we consider the intrinsic versions of Theorems
\ref{isoperimetric} and \ref{capacity} assuming that $P^m=N^n$. In this case, the extrinsic
distance to the pole $p$ becomes the intrinsic distance in $N^n$,
hence, for all $R$ the extrinsic domains $D_R$ become the geodesic
balls $B^N_R$ of the ambient manifold $N^n$. Then, for all $x \in P$
\begin{eqnarray*}
\nabla^P r(x)&=&\nabla^N r (x).
\end{eqnarray*}
As a consequence, $\|\nabla^P r\|=1$.

From this intrinsic viewpoint, we have the following
isoperimetric and volume comparison inequalities.

\begin{theorem} Let $N^n$ denote a complete
Riemannian manifold with a pole $p$. Suppose that the $p$-radial
sectional curvatures of $N^n$ are bounded from above by the
$p_w$-radial sectional curvatures of a $w$-model space $M^n_w$.
Assume that
\begin{equation}
w'\geq 0\quad.
\end{equation}
Then the capacity of the intrinsic annulus $A_{\rho,R}$ is bounded from below by
$$
\frac{\Vol(\partial B^N_\rho)}{\Vol(S_\rho^w)} \leq \frac{\C(A_{\rho,R})}{\C(A_{\rho,R}^w)}
$$
And, furthermore, if $M_w^n$ is hyperbolic, then $N^n$ is also hyperbolic.
\end{theorem}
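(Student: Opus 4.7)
The plan is to obtain this statement as a direct specialization of Theorem \ref{capacity-baix} applied to the identity immersion $\varphi = \text{id}: N^n \to N^n$. All the hypotheses of that theorem transfer essentially for free: the identity map is a complete and proper isometric immersion; it is vacuously minimal, since its second fundamental form vanishes identically; and the preimage condition $\varphi^{-1}(p) = \{p\} \neq \emptyset$ is automatic. The radial curvature bound and the condition $w' \geq 0$ are exactly those assumed here.

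The next step is to recognize what the flux ratio becomes in this intrinsic setting. As observed in the subsection preceding the statement, when $P^m = N^n$ the extrinsic distance coincides with the intrinsic distance $r = \dist_N(p, \cdot)$ and $\|\nabla^P r\| = \|\nabla^N r\| \equiv 1$ on $N^n \setminus \{p\}$. Therefore the flux of the extrinsic distance function through $\partial B^N_\rho$ specializes to
$$J_r(\rho) = \int_{\partial B^N_\rho} \|\nabla^N r\| \, d\sigma = \Vol(\partial B^N_\rho),$$
while on the model side $J^w_r(\rho) = \Vol(S^w_\rho)$ by definition. Plugging these two identities into the conclusion of Theorem \ref{capacity-baix} yields precisely
$$\frac{\Vol(\partial B^N_\rho)}{\Vol(S^w_\rho)} = \frac{J_r(\rho)}{J^w_r(\rho)} \leq \frac{\C(A_{\rho,R})}{\C(A^w_{\rho,R})},$$
which is the desired lower bound on the capacity quotient.

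For the hyperbolicity conclusion, the plan is to rearrange the inequality and pass to the limit $R \to \infty$. If $M^n_w$ is hyperbolic then, by Theorem A, there exists $\rho > 0$ such that $\lim_{R \to \infty} \C(A^w_{\rho, R}) = \C(B^w_\rho, M^n_w) > 0$. Combining this with the established bound and the monotonicity of annular capacities in the outer radius gives
$$\C(B^N_\rho, N^n) = \lim_{R \to \infty} \C(A_{\rho,R}) \geq \frac{\Vol(\partial B^N_\rho)}{\Vol(S^w_\rho)} \lim_{R \to \infty} \C(A^w_{\rho,R}) > 0,$$
and a second application of Theorem A concludes that $N^n$ is itself hyperbolic.

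The main obstacle, and really the only point requiring care, is verifying that Theorem \ref{capacity-baix} genuinely applies to the identity map, which is a slightly degenerate "immersion" with codimension zero. Once one checks that the proof of Theorem \ref{capacity-baix} goes through unchanged in this case (the transplanted model potential $\Psi^w$ still satisfies $\Delta^N \Psi^w \geq 0$ by Proposition \ref{lap-comp} with $H_P = 0$, and the maximum principle comparison with $\Psi$ proceeds identically), the rest of the argument is essentially bookkeeping, including the standard exchange of the limit $R \to \infty$ with the capacity inequality.
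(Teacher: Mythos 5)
Your proposal is correct and follows essentially the same route as the paper: the intrinsic theorem is obtained by specializing Theorem \ref{capacity-baix} (and the hyperbolicity argument of Corollary \ref{cor-capacity-baix} via Theorem \ref{sullivan}) to the identity immersion $P^m = N^n$, where $\|\nabla^P r\| = \|\nabla^N r\| = 1$ turns the flux quotient $J_r(\rho)/J^w_r(\rho)$ into $\Vol(\partial B^N_\rho)/\Vol(S^w_\rho)$. Your extra care about the codimension-zero case and the limit $R \to \infty$ is sound but is exactly the bookkeeping the paper leaves implicit.
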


\begin{theorem} Let $N^n$ denote a complete
Riemannian manifold with a pole $p$. Suppose that the $p$-radial
sectional curvatures of $N^n$ are bounded from above by the
$p_w$-radial sectional curvatures of a $w$-model space $M^n_w$.
Assume that $M_w^n$ is balanced from below.
Then,
\begin{enumerate}
\item for all $R>0$
\begin{equation}\label{equ3}
\frac{\Vol( B^N_R)}{\Vol( B^w_R)} \leq \frac{\Vol(\partial B^N_R)}{\Vol(S^w_R)}\quad.
\end{equation}
\item The functions $\frac{\Vol( B^N_R)}{\Vol( B^w_R)}$ and $ \frac{\Vol(\partial B^N_R)}{\Vol(S^w_R)}$ are non decreasing on $R$.
\item Denoting by $E_R^N(x)$ the mean exit time function for the geodesic ball $B^N_R$ in $N$ and denoting by $E_R^w$ the mean exit time function in the $R-$ball $B_R^w$ in the model space $M_w^n$. If equality holds in (\ref{equ3}) for some fixed $R>0$ then for any $x\in B^N_R$,  $E_R^N(x)=E_R^w(r(x))$.
\item The capacity of the intrinsic annulus $A_{\rho,R}$ is bounded from above by
\begin{displaymath}
 \frac{\C(A_{\rho,R})}{\C(A_{\rho,R}^w)}\leq \frac{\Vol(\partial B^N_R)}{\Vol(S_R^w)}
\end{displaymath}
\end{enumerate}
Furthermore, if we suppose that there exist a finite real constant $C<\infty$ such that $\frac{\Vol(B^N_R)}{\Vol(B_R^w)} <C$ (or $\frac{\Vol(\partial B^N_R)}{\Vol(S_R^w)} <C$) then if $M_w^n$ is parabolic, $N$ is parabolic, and
 \begin{displaymath}
\lim_{R\to\infty}\frac{\Vol(B^N_R)}{\Vol(B_R^w)}=\lim_{R\to\infty}\frac{\Vol(\partial B^N_R)}{\Vol(S_R^w)}.
\end{displaymath}

\end{theorem}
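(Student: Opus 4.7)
\medskip

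\noindent\textbf{Plan of proof.} The strategy is to recognize the statement as the specialization of Theorem \ref{isoperimetric}, Theorem \ref{capacity}, Theorem \ref{volume-flux-theo}, and Corollary \ref{parabolic-hiperbolic-dalt} to the trivial isometric immersion $\varphi=\mathrm{id}_{N^n}\colon N^n\to N^n$. The identity is totally geodesic, hence minimal, and proper since $N^n$ is complete, so every hypothesis of the extrinsic machinery is available. In this setting the extrinsic distance from the pole coincides with the intrinsic distance, extrinsic balls $D_R$ are intrinsic balls $B^N_R$, and $\nabla^P r=\nabla^N r$ has unit length on $N\setminus\{p\}$; consequently the flux collapses to the boundary volume,
\begin{equation*}
J_r(R)=\int_{\partial B^N_R}\|\nabla^P r\|\,d\sigma=\Vol(\partial B^N_R),\qquad J^w_r(R)=\Vol(S^w_R),
\end{equation*}
so the flux quotient appearing in the general comparison theorems is exactly $\Vol(\partial B^N_R)/\Vol(S^w_R)$.

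\medskip

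With this dictionary in hand, items (1), (2), (3) are direct transcriptions of the corresponding items of Theorem \ref{isoperimetric}: inequality (\ref{equ1}) becomes (\ref{equ3}), the monotonicity of both the volume and flux quotients in $R$ becomes the monotonicity of $\Vol(B^N_R)/\Vol(B^w_R)$ and $\Vol(\partial B^N_R)/\Vol(S^w_R)$, and the rigidity statement $E^P_R(x)=E^w_R(r(x))$ transfers verbatim. Item (4) is likewise the immediate translation of Theorem \ref{capacity}: inequality (\ref{equ2}) reads $\C(A_{\rho,R})/\C(A^w_{\rho,R})\leq \Vol(\partial B^N_R)/\Vol(S^w_R)$.

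\medskip

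For the final parabolicity and limit-equality assertion, I would first observe that the balanced-from-below hypothesis $q_w(r)\eta_w(r)\geq 1/m$ forces $w'\geq 0$ (since $q_w$ and $w$ are positive), so the additional hypothesis of Theorem \ref{volume-flux-theo} and Corollary \ref{parabolic-hiperbolic-dalt} is satisfied. Under the bound $\Vol(B^N_R)/\Vol(B^w_R)<C$, Theorem \ref{volume-flux-theo} supplies finite $w$-flux together with the identity $\text{Flux}_w(N)=\Vol_w(N)$, which is exactly the equality of limits; under the alternative bound $\Vol(\partial B^N_R)/\Vol(S^w_R)<C$ finite $w$-flux is immediate from the monotonicity of the flux quotient, and the same theorem yields the reverse identification. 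In either case Corollary \ref{parabolic-hiperbolic-dalt}(1) delivers parabolicity of $N^n$ whenever $M^n_w$ is parabolic.

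\medskip

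The main obstacle here is essentially bookkeeping rather than geometry: the entire statement is a corollary of the extrinsic comparison theory already established, and the only point requiring genuine attention is the compatibility check that balanced-from-below together with the conclusions of Theorem \ref{volume-flux-theo} applied to the identity immersion indeed suffice to run Corollary \ref{parabolic-hiperbolic-dalt}. No new analytic estimate, no new maximum-principle argument, and no new divergence calculation should be needed beyond the reduction $\|\nabla^P r\|\equiv 1$.
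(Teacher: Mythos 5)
Your proposal is correct and follows exactly the route the paper intends: the paper states this theorem in its ``Intrinsic versions'' subsection with no separate proof, precisely because it is the specialization of Theorems \ref{isoperimetric}, \ref{capacity}, \ref{volume-flux-theo} and Corollary \ref{parabolic-hiperbolic-dalt} to the identity immersion $P^m=N^n$, where $\|\nabla^P r\|\equiv 1$ turns the flux quotient into $\Vol(\partial B^N_R)/\Vol(S^w_R)$. Your one genuinely added check --- that balance from below, $q_w\,\eta_w\geq 1/m$ with $q_w>0$ and $w>0$, forces $w'>0$, so the extra hypothesis of Theorem \ref{volume-flux-theo} is automatic --- is valid and fills in the only detail the paper leaves implicit.
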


\subsection{Upper bounds for the fundamental tone}
S.T Yau suggested in \cite{YauReview} the ``very interesting'' question to find an upper estimate to the first Dirichlet eigenvalue of  minimal surfaces.
 
Recall that for any precompact region $\Omega\subset M$ in a Riemannian manifold $M$, the first eigenvalue $\lambda_1(\Omega)$ of the Dirichlet problem in $\Omega$ for the Laplace operator is defined by the variational property
\begin{equation}
\lambda_1(\Omega)=\inf_u \frac{\int \Vert \nabla u \Vert^2d\mu}{\int \Vert u \Vert^2}
\end{equation}
where the $\inf$ is taken over all Lipschitz functions $u\neq 0$ compactly supported in $\Omega$.

The fundamental tone $\lambda^*(M)$ of a complete Riemannian manifold can be obtained as the limit of the first Dirichlet eigenvalues of the precompact open sets in any exhaution sequence $\{\Omega_n\}_{n\in \mathbb{N}}$ for $M$, see \cite{GriExp}
\begin{equation}
\lambda^*(M)=\lim_{n\to\infty}\lambda_1(\Omega_n)\quad.
\end{equation}

In this section, we shall impose flux and volume restrictions not on the submanifold $P$ but on one end $V$ of the submanifold with respect to the extrinsic ball $D_{R_0}$.
Let us denote $D_R^{V}$ the intersection of the extrinsic ball $D_R$ with the end $V$ with respect to $D_{R_0}$
\begin{equation}
D_R^{V}=D_R \cap V \quad.
\end{equation}

Let us denote $J^V_r(R)$ the flux of the extrinsic distance in the end $V$, namely
\begin{equation}
J^V_r(R)=\int_{\partial D_R\cap V}\vert \nabla^P r \vert d\sigma\quad .
\end{equation}

With this setting we then have:
\begin{theorem}\label{fundamental-tone}
Let $\varphi: P^m \longrightarrow N^n$ be an isometric, proper and minimal immersion of a complete non-compact Riemannian $m$-manifold $P^m$ into a complete Riemannian manifold $N^n$ with a pole $o\in N$.  Let us suppose that the $o-$radial sectional curvatures of $N$ are bounded from above by
$$K_{o,N}(\sigma_{x}) \leq  -\frac{w''(r)}{w(r)}(\varphi(x))\,\,\,\forall x \in P\, ,$$ and the model space $M_w^m$ is balanced from below.  Suppose moreover that there exists an end $V$ with respect to an extrinsic ball $D_{R_0}$ with finite $w$-flux. Then
\begin{equation}
\lambda^*(P)\leq \frac{\text{Flux}_w(V)}{\Vol_w(V)}\limsup_{t\to\infty} \left(\frac{1}{\Vol(B_{t}^w)\int_{t}^\infty \frac{ds}{\Vol(S^w_s)}}\right)\quad .
\end{equation}
\end{theorem}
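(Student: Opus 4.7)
My plan is to implement a Brooks--Cheng type upper bound on $\lambda^{*}(P)$ by testing the Rayleigh quotient against radial ``plateau'' functions built from the model space capacity kernel, and to reduce from $P$ to the end $V$ by the obvious extension-by-zero argument. First, since $V$ is open in $P$, any Lipschitz function supported in $\overline{V}\setminus\partial V$ with zero trace on $\partial V$ extends by zero to an admissible test function for $\lambda^{*}(P)$; taking infima of Rayleigh quotients over the smaller class therefore yields $\lambda^{*}(P)\leq \lambda^{*}(V)$, and it suffices to bound $\lambda^{*}(V)$ from above.

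Fix an auxiliary $R_{1}>R_{0}$ and, for each pair $R>t>R_{1}$, set $u_{t,R}(x)=\phi_{t,R}(r(x))$, where $\phi_{t,R}:[R_{0},\infty)\to\erre$ is the piecewise smooth function that vanishes at $R_{0}$ and at $R$, increases linearly from $0$ to $1$ on $[R_{0},R_{1}]$, stays identically equal to $1$ on $[R_{1},t]$, and decays back to $0$ on $[t,R]$ along the model capacity profile
\[
\phi_{t,R}(s)=\frac{\int_{s}^{R}d\tau/\Vol(S^{w}_{\tau})}{\int_{t}^{R}d\tau/\Vol(S^{w}_{\tau})},\qquad t\leq s\leq R\quad.
\]
Since $r|_{V}\geq R_{0}$ with $r=R_{0}$ on $\partial V$, the function $u_{t,R}$ is Lipschitz, non-negative, vanishes on $\partial V$, and has compact support in $V$, hence it is admissible for the variational characterization of $\lambda^{*}(V)$.

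I would then compute the Rayleigh quotient via the co-area formula. For the numerator $\int_{V}\Vert \nabla^{P}u_{t,R}\Vert^{2}d\mu=\int_{R_{0}}^{R}\phi'_{t,R}(s)^{2}J^{V}_{r}(s)\,ds$, the ramp contribution on $[R_{0},R_{1}]$ is a constant $C_{0}$ depending only on $R_{0},R_{1},V$; the contribution on $[R_{1},t]$ vanishes; and on $[t,R]$ the end-analogue of the flux monotonicity in Theorem \ref{isoperimetric}, namely $J^{V}_{r}(s)/\Vol(S^{w}_{s})\leq \text{Flux}_{w}(V)$, yields $\int_{t}^{R}\phi'_{t,R}(s)^{2}J^{V}_{r}(s)\,ds \leq \text{Flux}_{w}(V)\big/\int_{t}^{R}d\tau/\Vol(S^{w}_{\tau})$. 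For the denominator, since $\phi_{t,R}\equiv 1$ on $A^{V}_{R_{1},t}$, the volume monotonicity $\Vol(D^{V}_{s})/\Vol(B^{w}_{s})\nearrow \Vol_{w}(V)$ gives $\int_{V}u_{t,R}^{2}d\mu\geq \Vol(D^{V}_{t})-\Vol(D^{V}_{R_{1}})\geq (\Vol_{w}(V)-\epsilon)\Vol(B^{w}_{t})$ for all $t$ sufficiently large.

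Finally, letting $R\to\infty$ replaces $\int_{t}^{R}d\tau/\Vol(S^{w}_{\tau})$ by $\int_{t}^{\infty}d\tau/\Vol(S^{w}_{\tau})$ (finite by the implicit hyperbolicity of $M^{m}_{w}$, otherwise the stated bound is vacuous), so for every sufficiently large $t$
\[
\lambda^{*}(V)\leq \frac{C_{0}}{(\Vol_{w}(V)-\epsilon)\Vol(B^{w}_{t})}+\frac{\text{Flux}_{w}(V)}{(\Vol_{w}(V)-\epsilon)\,\Vol(B^{w}_{t})\int_{t}^{\infty}d\tau/\Vol(S^{w}_{\tau})}\quad.
\]
Taking $\limsup_{t\to\infty}$ on the right, the ramp summand vanishes while the second produces the stated expression; sending $\epsilon\to 0$ and invoking $\lambda^{*}(P)\leq \lambda^{*}(V)$ completes the proof. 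The principal technical obstacle is to establish the flux and volume monotonicity of Theorem \ref{isoperimetric} over the end $V$ rather than over the complete submanifold $P$: this requires re-running the mean-exit-time/divergence argument on annular regions $A^{V}_{s_{1},s_{2}}\subset V$ while carefully absorbing the boundary contribution along $\partial V\subset\partial D_{R_{0}}$, after which $\text{Flux}_{w}(V)$ and $\Vol_{w}(V)$ are well-defined suprema and behave exactly as in the ambient theorems.
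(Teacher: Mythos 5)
Your proposal is correct in substance and runs, at bottom, on the same engine as the paper's proof, but you unfold that engine rather than invoke it. The paper argues in four cited steps: (i) Grigor'yan's eigenvalue--capacity inequality $\lambda_1(D_R^V)\leq \C(A^V_{t,R})/\Vol(D_t^V)$; (ii) the capacity comparison of Theorem \ref{capacity} applied on the end, $\C(A^V_{t,R})\leq\frac{J^V_r(R)}{J^w_r(R)}\,\C(A^w_{t,R})$; (iii) domain monotonicity $\lambda_1(D_R)\leq\lambda_1(D_R^V)$; (iv) letting $R\to\infty$ and then $t\to\infty$. Your decay profile $\phi_{t,R}$ on $[t,R]$ is exactly the transplanted (reversed) model equilibrium potential, and your energy estimate $\int_t^R(\phi'_{t,R})^2J_r^V\,ds\leq \text{Flux}_w(V)\cdot\left(\int_t^R d\tau/\Vol(S^w_\tau)\right)^{-1}=\text{Flux}_w(V)\,\C(A^w_{t,R})$ is precisely the content of step (ii); your denominator bound plays the role of step (iv). What your route buys: it is self-contained (no capacity black boxes), and it is actually \emph{more} careful than the paper at the inner boundary $\partial V$. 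The test function implicit in the paper's steps (i) and (iii) equals $1$ on all of $D_t^V$ up to $\partial V$, and such a function does not extend by zero across $\partial V$ to an admissible Lipschitz test function on $P$ (equivalently, domain monotonicity is being applied to a mixed Dirichlet--Neumann problem, where it needs justification); your ramp on $[R_0,R_1]$ repairs exactly this, at the price of the constant $C_0$. Incidentally, you do not even need the intermediary $\lambda^*(V)$: your $u_{t,R}$, extended by zero, is globally Lipschitz and compactly supported on $P$, hence directly admissible for $\lambda^*(P)$.

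Two points need tightening. First, the lemma you defer --- that the end-localized volume quotient $\Vol(D^V_t)/\Vol(B^w_t)$ is non-decreasing, so that its limit equals the supremum $\Vol_w(V)$ --- is genuinely needed (by the paper too, which applies theorems stated for the complete $P$ to the end without comment), and it does go through with the sign in your favor: running the divergence argument of Theorem \ref{isoperimetric} on $D_R^V$ produces the extra boundary term $q_w(R_0)\int_{\partial V}\vert\nabla^P r\vert\,d\sigma\geq 0$ (since $\nabla^P E_R^w$ points inward along $\partial V$), so $\Vol(D_R^V)/\Vol(B_R^w)\leq J_r^V(R)/\Vol(S_R^w)$ and the logarithmic-derivative computation is unchanged; note also that your numerator bound $J^V_r(s)/\Vol(S^w_s)\leq\text{Flux}_w(V)$ requires no monotonicity at all, being just the definition of the supremum. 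Moreover, the vanishing of $C_0/\Vol(B^w_t)$ requires $\Vol(B^w_t)\to\infty$, which should be said: balance from below forces $q_w\eta_w\geq 1/m$, hence $w'>0$, hence $w(s)\geq w(1)>0$ for $s\geq1$ and $\Vol(B^w_t)\to\infty$. Second, your parenthetical that the bound is ``vacuous'' when $\int_t^\infty ds/\Vol(S^w_s)=\infty$ is backwards: in that (parabolic) case the right-hand side of the theorem is $0$, so the claim $\lambda^*(P)=0$ is the strongest possible conclusion --- and your own proof still delivers it, since letting $R\to\infty$ kills the decay term and $C_0/\Vol(B^w_t)\to 0$ as just noted.
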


\begin{proof}
Due to the relation between the first Dirichlet eigenvalue and the capacity given in \cite{GriCap} we can conclude for the extrinsic ball $D_R^V$ that
\begin{equation}
\lambda_1(D_R^V)\leq \frac{\C(A^V_{t,R})}{\Vol(D_t^V)}\quad .
\end{equation}
Being $t<R$ and $A^V_{t,R}$ the extrinsic annulus in $V$. Hence, by the theorem \ref{capacity}
\begin{equation}
\lambda_1(D_R^V)\leq \frac{\frac{J_r^V(R)}{J_r^w(R)}}{\frac{\Vol(D_t^V)}{\Vol(B_t^w)}} \frac{\C(A^w_{t,R})}{\Vol(B^w_t)}\quad .
\end{equation}
For any $t<R$. Finally, taking into account that $\lambda(D_R)\leq \lambda_1(D_R^V)$ (by the monotonicity of the first eigenvalue), and letting  $R$ tend to infinity we have

\begin{equation}
\lambda^*(P)\leq \frac{\text{Flux}_w(V)}{\frac{\Vol(D_t^V)}{\Vol(B_t^w)}} \frac{1}{\Vol(B^w_t)\int_t^\infty\frac{ds}{\Vol(S_s^w)}}\quad .
\end{equation}
Taking limits, the theorem follows.
\end{proof}

Obviously, by using theorem \ref{volume-flux-theo} we also have the following:

\begin{corollary}Under the assumptions of theorem \ref{fundamental-tone} suppose moreover
$$
w'\geq 0.
$$
Then,
\begin{equation}
\lambda^*(P)\leq \limsup_{t\to\infty} \left(\frac{1}{\Vol(B_{t}^w)\int_{t}^\infty \frac{ds}{\Vol(S^w_s)}}\right)\quad .
\end{equation}
\end{corollary}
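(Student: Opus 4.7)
The plan is to deduce the corollary by combining Theorem \ref{fundamental-tone} with Theorem \ref{volume-flux-theo}, both applied to the end $V$. Theorem \ref{fundamental-tone} already gives
\begin{equation*}
\lambda^*(P)\leq \frac{\text{Flux}_w(V)}{\Vol_w(V)}\limsup_{t\to\infty} \left(\frac{1}{\Vol(B_{t}^w)\int_{t}^\infty \frac{ds}{\Vol(S^w_s)}}\right),
\end{equation*}
so the whole task reduces to showing that the prefactor $\text{Flux}_w(V)/\Vol_w(V)$ equals $1$ under the extra assumption $w'\geq 0$. This is precisely the identity that Theorem \ref{volume-flux-theo} furnishes on the full submanifold, and the strategy is to localize its proof to $V$.

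First I would check that the assumption ``$V$ has finite $w$-flux'' in Theorem \ref{fundamental-tone} automatically implies ``$V$ has finite $w$-volume'' in the setting of the present corollary. The monotonicity part of Theorem \ref{isoperimetric}, applied end by end to the extrinsic balls $D_R^V = D_R \cap V$ (which is legitimate because the divergence identities in the proof of Theorem \ref{isoperimetric} localize to extrinsic annuli contained in $V$ once $R>R_0$), yields
\begin{equation*}
\frac{\Vol(D_R^V)}{\Vol(B_R^w)}\leq \frac{J_r^V(R)}{J_r^w(R)}\leq \text{Flux}_w(V)<\infty,
\end{equation*}
so taking $\sup_R$ on the left gives $\Vol_w(V)<\infty$.

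Next I would rerun the proof of Theorem \ref{volume-flux-theo} on $V$. The key ingredients—the monotonicity formula (\ref{monotonicity}), Lemma \ref{bola-s} (which is purely intrinsic to the model and therefore unchanged), and the integration along a sequence $t_i\to\infty$—all go through verbatim once $D_R$, $J_r(R)$, $\Vol(D_R)$ are replaced by their $V$-restricted counterparts $D_R^V$, $J_r^V(R)$, $\Vol(D_R^V)$. Specifically, applying the same argument that turns the inequality
\begin{equation*}
0\leq \frac{J_r^V(s)}{J_r^w(s)}-\frac{\Vol(D_s^V)}{\Vol(B_s^w)}\leq \Vol_w(V)\left(\ln\frac{\Vol(D_s^V)}{\Vol(B_s^w)}\right)' s
\end{equation*}
into the equality $\text{Flux}_w(V)=\Vol_w(V)$, one concludes the desired cancellation of the prefactor. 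Substituting back into the bound from Theorem \ref{fundamental-tone} yields exactly the inequality in the corollary.

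The main obstacle I foresee is purely a bookkeeping one: verifying that each divergence-theorem step in the proofs of Theorems \ref{isoperimetric} and \ref{volume-flux-theo} remains valid when carried out on the piece of $P$ lying inside the end $V$, whose boundary consists of parts of $\partial D_R$ together with parts of $\partial D_{R_0}$. Since $V$ is determined by a fixed extrinsic ball $D_{R_0}$ and we are eventually taking $R\to\infty$, the boundary contributions along $\partial D_{R_0}$ are fixed constants and are absorbed in the same way that the constant $R_1$ is absorbed in the proof of Theorem \ref{volume-flux-theo} (via the integrated form $\Vol(D_t^V)/\Vol(B_t^w) = \frac{\Vol(D_{R_1}^V)}{\Vol(B_{R_1}^w)} \exp\!\int_{R_1}^{t}(\ln(\Vol(D_s^V)/\Vol(B_s^w)))'\,ds$). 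Once this localization is dispatched, the corollary is immediate.
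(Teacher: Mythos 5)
Your proposal is correct and takes essentially the same route as the paper: the paper's entire proof is the remark that, by Theorem \ref{volume-flux-theo}, the prefactor $\text{Flux}_w(V)/\Vol_w(V)$ in Theorem \ref{fundamental-tone} equals $1$ once $w'\geq 0$. Your extra work localizing Theorem \ref{volume-flux-theo} to the end $V$ (deducing finite $w$-volume of $V$ from finite $w$-flux, and checking that the divergence-theorem steps survive the additional boundary piece along $\partial D_{R_0}$, which in fact contributes with a favorable sign) is precisely the detail the paper suppresses under the word ``obviously''.
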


Using the Cheeger isoperimetric constant we can deduce the following lower bounds
\begin{theorem}\label{fundamental-tone-b}
Let $\varphi: P^m \longrightarrow N^n$ be an isometric, proper and minimal immersion of a complete non-compact Riemannian $m$-manifold $P^m$ into a complete Riemannian manifold $N^n$ with a pole $o\in N^n$ .  Let us suppose that the $o-$radial sectional curvatures of $N^n$ are bounded from above by
$$K_{o,N}(\sigma_{x}) \leq  -\frac{w''(r)}{w(r)}(\varphi(x))\,\,\,\forall x \in P\, ,$$ and the model space $M_w^m$ is balanced from below.  Suppose moreover that
$$
L:=\sup_{t\in \erre^+} q_w(t)<\infty.
$$
 Then
\begin{equation}
\frac{1}{4L^2}\leq \lambda^*(P)\quad .
\end{equation}
\end{theorem}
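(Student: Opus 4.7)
The plan is to establish the lower bound by a divergence-theorem test-field argument in the spirit of Cheeger: I will construct a vector field $X$ on $P$ with $|X|\leq L$ and $\Div(X)\geq 1$, and then pair it against $u^2$ for an arbitrary compactly supported Lipschitz function $u$.

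First I would set $F(r):=\int_0^r q_w(s)\,ds$, so that $F'=q_w$ and $F''=q_w'=1-(m-1)q_w\eta_w$ (the latter coming from differentiating the explicit expression $q_w=\Vol(B^w_r)/\Vol(S^w_r)$). Then I would introduce the radial vector field
$$X := \nabla^P(F\circ r) = q_w(r)\,\nabla^P r,$$
which satisfies $|X|=q_w(r)\,|\nabla^P r|\leq q_w(r)\leq L$ since $|\nabla^P r|\leq 1$ by \eqref{radiality}.

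Next I would invoke the Laplacian comparison of Proposition \ref{lap-comp}, applicable because $F'=q_w\geq 0$ and $\varphi$ is minimal, to obtain
$$\Div(X)=\Delta^P(F\circ r)\geq |\nabla^P r|\,\bigl(F''-F'\eta_w\bigr)\circ r+m(F'\eta_w)\circ r=|\nabla^P r|\bigl(1-m\,q_w\eta_w\bigr)+m\,q_w\eta_w.$$
Balance from below gives $m\,q_w\eta_w\geq 1$, so the coefficient $(1-m\,q_w\eta_w)$ of $|\nabla^P r|$ is non-positive; combined with $|\nabla^P r|\leq 1$ this absorbs to $\Div(X)\geq (1-m\,q_w\eta_w)+m\,q_w\eta_w=1$. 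This is the delicate step and, I expect, the main obstacle: the choice of $F$ as the antiderivative of the isoperimetric quotient $q_w$ (not of $w$ or of $\eta_w$) is precisely what gives the $|\nabla^P r|$-coefficient the right sign, so that the purely extrinsic bound $|\nabla^P r|\leq 1$ closes the estimate to the sharp value $1$ under balance from below alone.

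Finally, for any $u\in C^\infty_c(P)$ I would apply the divergence theorem to $u^2 X$, which yields $\int_P u^2\,\Div(X)\,d\mu=-2\int_P u\,\langle X,\nabla u\rangle\,d\mu$, and then Cauchy--Schwarz:
$$\int_P u^2\,d\mu \leq \int_P u^2\,\Div(X)\,d\mu = -2\int_P u\,\langle X,\nabla u\rangle\,d\mu \leq 2L\left(\int_P u^2\,d\mu\right)^{1/2}\left(\int_P|\nabla u|^2\,d\mu\right)^{1/2}.$$
Squaring and rearranging give $\int_P|\nabla u|^2\,d\mu\;\geq\;\tfrac{1}{4L^2}\int_P u^2\,d\mu$, and the variational characterization of $\lambda^*(P)$ as the limit of the first Dirichlet eigenvalues along any precompact exhaustion delivers $\lambda^*(P)\geq 1/(4L^2)$.
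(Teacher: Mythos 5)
Your proof is correct, and it reaches the paper's bound by a genuinely different (though closely related) route. The comparison ingredient is the paper's own in disguise: since $E^w_R(r)=\int_r^R q_w(t)\,dt$, your function satisfies $F\circ r=E^w_R(0)-E^w_R(r)$, so your field $X=q_w(r)\,\nabla^P r$ is exactly the negative gradient of the transplanted mean exit time function, and your inequality $\Div X\geq 1$ is precisely the comparison $\Delta^P E^w_R\leq -1$ that the paper quotes from \cite{MP2}. Your rederivation of it from Proposition \ref{lap-comp} is sound: $q_w'=1-(m-1)q_w\eta_w$ gives $F''-F'\eta_w=1-m\,q_w\eta_w\leq 0$ by balance from below, and this non-positive term is absorbed using $\vert\nabla^P r\vert\leq 1$, yielding $\Delta^P(F\circ r)\geq 1$. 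Where you genuinely diverge is the spectral step. The paper integrates $\Delta^P E^w_R\leq -1$ over an arbitrary smooth domain $\Omega$, uses $\vert E^w_R(r)'\vert=q_w(r)\leq L$ to get the linear isoperimetric inequality $\Vol(\partial\Omega)/\Vol(\Omega)\geq 1/L$, hence a Cheeger constant bound $h(P)\geq 1/L$, and then cites Cheeger's inequality $\lambda^*(P)\geq \tfrac14 h(P)^2$ from \cite{Chavel}. You instead pair $\Div X\geq 1$ against $u^2$ and apply Cauchy--Schwarz, which amounts to reproving the relevant (vector-field) case of Cheeger's inequality directly; this makes the argument self-contained, with no loss in the constant $1/(4L^2)$. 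The one technical point you share with the paper, and might acknowledge, is regularity: $r$ is not smooth on $\varphi^{-1}(o)$, but since $q_w(s)\sim s/m$ near $0$, the function $F\circ r$ behaves like $r^2/(2m)$ there, and the inequality $\Delta^P(F\circ r)\geq 1$ holds in the distributional sense, which is all the integration by parts against $u^2$ requires.
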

\begin{proof}
Consider $\Omega\subset P^m$ a smooth domain with smooth boundary $\partial \Omega$. Using the transplanted mean exit function in a similar way as in the proof of theorem \ref{isoperimetric} we obtain:

\begin{equation}\begin{aligned}
-\Vol(\Omega)=&\int_\Omega\Delta^PE_Rd\mu\geq \int_\Omega\Delta^PE_R^wd\mu=\int_{\partial \Omega}E_R^w(r)'\langle \nabla ^Pr, \nu\rangle d\sigma\\
\geq &-\int_{\partial \Omega}q_w(r)\langle \nabla^Pr,\nu\rangle d\sigma\geq-\int_{\partial \Omega}q_w(r) d\sigma\\
\geq & - L\Vol(\partial \Omega)\quad .
\end{aligned}
\end{equation}
Hence, for any $\Omega\subset P$,
\begin{equation}
\frac{\Vol(\partial \Omega)}{\Vol (\Omega)}\geq \frac{1}{L}\quad.
\end{equation}
Thence the Cheeger constant $h(P)$ (see \cite{Chavel}) satisfies
\begin{equation}
h(P)\geq \frac{1}{L}\quad .
\end{equation}
Taking into account that
\begin{equation}
\lambda^*(P)\geq \frac{1}{4} \left(h(P)\right)^2\quad ,
\end{equation}
the theorem follows.
\end{proof}

As an immediate consequence of the previous theorems and corollaries in the particular setting of a minimal submanifold within a Cartan-Hadamard ambient manifold is the following:

\begin{corollary}Let $\varphi: P^m \longrightarrow N^n$ be a complete minimal immersion  into a simply connected Cartan-Hadamard manifold $N^n$ with sectional curvatures $K_N\leq b \leq 0$.  Suppose moreover that there exists an end $V$ with respect to an extrinsic ball $D_{R_0}$ with finite $w_b$-volume. Then
\begin{equation}
\frac{-(m-1)^2b}{4}\leq \lambda^*(P)\leq -(m-1)^2b\quad.
\end{equation}
\end{corollary}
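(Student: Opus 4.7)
The strategy is to apply Theorems \ref{fundamental-tone} and \ref{fundamental-tone-b} with the model space $\kam=\mathbb{K}^m(b)$, whose warping function is $w_b(r)=\frac{1}{\sqrt{-b}}\sinh(\sqrt{-b}\,r)$ when $b<0$ (and $w_0(r)=r$ when $b=0$). Under the hypothesis $K_N\le b\le 0$ we have $K_N\le -w_b''/w_b$, and the real space forms $\kam$ are totally balanced and satisfy $w_b'\ge 0$, so the hypotheses of the relevant results from \S\ref{Results} are all in force. The flat case $b=0$ gives the statement trivially as $0\le \lambda^*(P)\le 0$, so I would concentrate on $b<0$.

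For the upper bound I would first invoke Theorem \ref{volume-flux-theo} applied to the end $V$: from finite $w_b$-volume and $w_b'\ge 0$ one obtains finite $w_b$-flux with $\text{Flux}_{w_b}(V)=\Vol_{w_b}(V)$, so the prefactor in Theorem \ref{fundamental-tone} equals $1$. The theorem then reduces the problem to evaluating
$$
\limsup_{t\to\infty}\frac{1}{\Vol(B_t^{w_b})\int_t^\infty \Vol(S_s^{w_b})^{-1}\,ds}.
$$
Using $\Vol(S_s^{w_b})=V_0 w_b(s)^{m-1}$ together with the dominant asymptotics $w_b(s)\sim \frac{e^{\sqrt{-b}\,s}}{2\sqrt{-b}}$ as $s\to\infty$, one finds
$$
\Vol(B_t^{w_b})\sim \frac{V_0\, e^{(m-1)\sqrt{-b}\,t}}{(2\sqrt{-b})^{m-1}(m-1)\sqrt{-b}},\qquad
\int_t^\infty\!\frac{ds}{\Vol(S_s^{w_b})}\sim \frac{(2\sqrt{-b})^{m-1}}{V_0(m-1)\sqrt{-b}\,e^{(m-1)\sqrt{-b}\,t}},
$$
whose product is $\frac{1}{(m-1)^2(-b)}$. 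Taking reciprocals gives the upper bound $-(m-1)^2 b$.

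For the lower bound I would apply Theorem \ref{fundamental-tone-b}, which requires verifying that $L:=\sup_{t>0}q_{w_b}(t)<\infty$. Differentiating $q_{w_b}=\Vol(B^{w_b}_t)/\Vol(S^{w_b}_t)$ gives the identity $q_{w_b}'(t)=1-(m-1)\eta_{w_b}(t)q_{w_b}(t)$, and the balance-from-above inequality $\eta_{w_b}q_{w_b}\le 1/(m-1)$ (valid for the totally balanced space forms $\kam$) shows $q_{w_b}'\ge 0$. Hence $L=\lim_{t\to\infty}q_{w_b}(t)$, and the same exponential asymptotics used above yield $L=\frac{1}{(m-1)\sqrt{-b}}$. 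Theorem \ref{fundamental-tone-b} then delivers $\lambda^*(P)\ge \frac{1}{4L^2}=\frac{-(m-1)^2b}{4}$.

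No genuinely new arguments are required: the whole proof reduces to carefully extracting the $\sinh$-asymptotics and feeding them into the flux/volume/capacity machinery already developed. The main technical point is justifying the two closed-form limits above (equivalently, verifying total balance and monotonicity of $q_{w_b}$ for the hyperbolic space form), but this is a standard computation with the explicit warping function $w_b$.
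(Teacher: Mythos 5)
Your proposal is correct and takes essentially the same route as the paper, which presents this corollary as an immediate consequence of Theorems \ref{fundamental-tone}, \ref{fundamental-tone-b} and \ref{volume-flux-theo} instantiated at $w=w_b$ — exactly what you carry out, and both your $\sinh$-asymptotics (giving the product $\frac{1}{(m-1)^2(-b)}$) and the identity $q_{w_b}'=1-(m-1)\eta_{w_b}q_{w_b}$ used for the Cheeger bound are correct. One small caveat: for $b=0$ the upper bound $\lambda^*(P)\leq 0$ is not ``trivial'' — it follows from the same limsup computation with $w_0(r)=r$, since $\Vol(B^{w_0}_t)\int_t^\infty \Vol(S^{w_0}_s)^{-1}\,ds$ is infinite for $m=2$ and grows like $t^2/(m(m-2))$ for $m\geq 3$, so the limsup vanishes.
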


\begin{remark}
Note that if $b=0$ in the above theorem, $\lambda^*(P)=0$. See also \cite{GimFT}.
\end{remark}

\subsection{Applications to minimal submanifolds in $\erre^n$}
If $P^m$ is a minimal submanifold in $\erre^n$, it is well known that the extrinsic distance $r$ satisfies

\begin{equation}
\Delta^Pr^2=2m
\end{equation}
Applying the divergence theorem
\begin{equation}
\begin{aligned}
2m \Vol(D_R)=&\int_{D_R}\Delta^pr^2 d\mu=\int_{\partial D_R}2r \langle \nabla r, \nu\rangle d\sigma\\
=&2R\int_{\partial D_R}\langle \nabla r, \frac{\nabla r}{\vert \nabla r\vert} \rangle d\sigma=2R\int_{\partial D_R}\vert \nabla r\vert d\sigma\\
=&2m\frac{\Vol(B_R^{w_0})}{\Vol(S_R^{w_0})}\int_{\partial D_R}\vert \nabla r\vert d\sigma
\end{aligned}
\end{equation}

hence, the volume comparison quotient $\frac{\Vol(D_R)}{\Vol(B_R^{w_0})}$ is just
\begin{equation}\label{equal-flux-vol}
\frac{\Vol(D_R)}{\Vol(B_R^{w_0})} =\frac{J_r(R)}{J^{w_0}_r(R)}\quad .
\end{equation}

And therefore, we can state that
\begin{corollary}Let $P^m$ be a minimal submanifold properly immersed in the Euclidean space $\erre^n$. Then
$$
E^P_R(x)=E^{\erre^m}_R(r(x))\quad,
$$
where $E^P_R(x)$ denotes the first exit from $D_R$ for a Brownian particle starting at $x\in D_R$, and  $E_R^w(r)$ denotes the (rotationally symmetric) mean exit time function  for the $R-$ball $B_R^w$ in the model space $M_w^m$
\end{corollary}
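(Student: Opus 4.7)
The plan is to deduce the corollary as a direct consequence of part (3) of Theorem \ref{isoperimetric}, once we verify that the hypotheses of that theorem are satisfied for the comparison function $w_0(r)=r$ and that the equality case of (\ref{equ1}) holds automatically in the Euclidean ambient.

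First I would check that the Euclidean ambient $\erre^n$ fits the framework of Theorem \ref{isoperimetric} with model space $M^m_{w_0}=\erre^m$. Any point $o\in\erre^n$ is a pole, the $o$-radial sectional curvatures vanish, and $-w_0''/w_0=0$, so the curvature hypothesis $K_{o,N}\leq -w_0''(r)/w_0(r)$ is satisfied with equality. Moreover $M^m_{w_0}=\erre^m$ is balanced from below: with $w_0(r)=r$ one has $q_{w_0}(r)=r/m$ and $\eta_{w_0}(r)=1/r$, hence $q_{w_0}\eta_{w_0}=1/m$ identically.

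Next, the crucial ingredient is precisely equation (\ref{equal-flux-vol}), derived immediately above the corollary by applying $\Delta^P r^2=2m$ and the divergence theorem on the extrinsic ball $D_R$. That identity says that for every $R>0$,
\begin{equation*}
\frac{\Vol(D_R)}{\Vol(B_R^{w_0})}=\frac{J_r(R)}{J_r^{w_0}(R)},
\end{equation*}
which is exactly the equality case of inequality (\ref{equ1}) in Theorem \ref{isoperimetric}, and it holds for every radius $R>0$.

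Finally, I would invoke part (3) of Theorem \ref{isoperimetric}: since equality in (\ref{equ1}) holds for the given $R$, it follows that $E_R^P(x)=E_R^{w_0}(r(x))=E_R^{\erre^m}(r(x))$ for all $x\in D_R$, which is the desired conclusion. No step presents a real obstacle: the entire argument is the observation that the Euclidean setting saturates the general comparison, so the equality clause of the earlier theorem applies verbatim. The only thing worth pointing out carefully is that (\ref{equal-flux-vol}) was proved via the minimality identity $\Delta^P r^2=2m$, which is specific to the Euclidean ambient and is what makes the equality, and hence the pointwise identification of mean exit times, automatic rather than exceptional.
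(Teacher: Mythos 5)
Your proposal is correct and matches the paper's own (implicit) argument exactly: the paper derives equation (\ref{equal-flux-vol}) from $\Delta^P r^2=2m$ and the divergence theorem, and then states the corollary as an immediate consequence of the equality case, part (3), of Theorem \ref{isoperimetric}. Your additional verification that $\erre^m$ is balanced from below ($q_{w_0}\eta_{w_0}=1/m$) and that the curvature hypothesis holds with $w_0(r)=r$ is a worthwhile explicit check that the paper leaves tacit.
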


If we have finite $w_0$-volume  ($\sup_{R\in \erre^+}\frac{\Vol(D_R)}{\Vol(B_R^{w_0})}<\infty$)
we also get:

\begin{corollary}\label{lambda-1}
Let $P^m$ be a minimal submanifold immersed in $\erre^n$, suppose moreover that $P$ has finite $w_0$-volume  then:
\begin{enumerate}
\item $P$ is parabolic if  $m=2$ and if $m\geq 3$, $P$ is hyperbolic.
\item $\lambda^*(P)=0$.
\end{enumerate}
\end{corollary}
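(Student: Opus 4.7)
The plan is to apply Corollary \ref{parabolic-hiperbolic} for part (1) and Theorem \ref{fundamental-tone} for part (2), specializing everything to the Euclidean model $M_{w_0}^m = \erre^m$ with warping $w_0(r) = r$. First I would verify the hypotheses: $\erre^n$ has vanishing sectional curvatures, so $K_{o,\erre^n} \leq 0 = -w_0''/w_0$; the model $\erre^m$ is totally balanced since $q_{w_0}(r)\,\eta_{w_0}(r) \equiv 1/m$, in particular balanced from below; and $w_0' \equiv 1 \geq 0$. The key bridge from the stated hypothesis to these corollaries is identity (\ref{equal-flux-vol}), valid for minimal submanifolds of $\erre^n$, which makes the $w_0$-flux and $w_0$-volume quotients agree pointwise. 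In particular, finite $w_0$-volume of $P$ is equivalent to finite $w_0$-flux of $P$, and each end inherits finite $w_0$-flux.

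For part (1), Corollary \ref{parabolic-hiperbolic} equates the conformal type of $P$ with that of $\erre^m$. It then remains only to recall that $\erre^2$ is parabolic while $\erre^m$ with $m \geq 3$ is hyperbolic; this can be read directly from the model-space capacity formula
\begin{equation}
\C(A^{w_0}_{1,R}) = V_0 \left(\int_1^R s^{1-m}\,ds\right)^{-1},
\end{equation}
whose limit as $R \to \infty$ vanishes precisely when the tail integral diverges, i.e., precisely when $m \leq 2$.

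For part (2), I would pick any end $V$ of $P$ with respect to some extrinsic ball $D_{R_0}$ and apply Theorem \ref{fundamental-tone}, which yields
\begin{equation}
\lambda^*(P) \leq \frac{\text{Flux}_{w_0}(V)}{\Vol_{w_0}(V)} \cdot \limsup_{t \to \infty} \frac{1}{\Vol(B_t^{w_0}) \int_t^\infty ds/\Vol(S_s^{w_0})}.
\end{equation}
The prefactor is a finite positive constant, so it suffices to show the limsup vanishes. Using $\Vol(B_t^{w_0}) = V_m t^m$ and $\Vol(S_s^{w_0}) = V_0 s^{m-1}$, a routine computation gives $\Vol(B_t^{w_0}) \int_t^\infty ds/\Vol(S_s^{w_0}) = t^2/(m(m-2))$ for $m \geq 3$, while the tail integral already diverges for $m = 2$; in either case the reciprocal limsup is $0$, so $\lambda^*(P) \leq 0$, and together with the trivial bound $\lambda^*(P) \geq 0$ this gives $\lambda^*(P) = 0$.

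No genuine obstacle is anticipated: the entire argument is a translation exercise between the general $w$-flux, $w$-volume, and radial curvature hypotheses of the preceding theorems and the concrete Euclidean data, with identity (\ref{equal-flux-vol}) doing the essential work of converting the assumed finite $w_0$-volume into the finite $w_0$-flux required by both Corollary \ref{parabolic-hiperbolic} and Theorem \ref{fundamental-tone}.
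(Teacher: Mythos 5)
Your proposal is correct and follows exactly the route the paper intends: equation (\ref{equal-flux-vol}) converts the finite $w_0$-volume hypothesis into finite $w_0$-flux, after which part (1) is Corollary \ref{parabolic-hiperbolic} together with the parabolicity of $\erre^2$ and hyperbolicity of $\erre^m$ for $m\geq 3$, and part (2) is Theorem \ref{fundamental-tone} (equivalently its corollary with $w'\geq 0$) plus the computation that $\Vol(B_t^{w_0})\int_t^\infty ds/\Vol(S_s^{w_0})$ grows like $t^2/(m(m-2))$ for $m\geq 3$ and diverges for $m=2$. Your hypothesis checks (total balance of $w_0$, positivity and finiteness of the prefactor, ends inheriting finite flux) match the paper's implicit argument, so there is nothing to add.
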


On the other hand, in special geometric settings  the finiteness of the $w_0$-volume is related to the number of ends

\begin{theoremA}(See \cite{A1} and \cite{ChSubv})
Let $P^m$ be a minimal submanifold properly immersed in $\erre^n$ with finite total scalar curvature i.e. $\int_P\Vert B^P \Vert^m d\mu <\infty$ where  $\Vert B^P \Vert$ denotes the norm of the second fundamental form in $P$, then
\begin{equation}
\frac{J_r(R)}{J_r^{w_0}(R)}\leq \mathcal{E}(P),
\end{equation}
provided either of the following two conditions hold
\begin{enumerate}
\item $m=2$, $n=3$ and each end of $P$ is embedded.
\item $m\geq 3$.
\end{enumerate}
Where  $\mathcal{E}(P)$ denotes the finite number of ends of $P$.
\end{theoremA}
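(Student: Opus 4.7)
The plan is to reduce the inequality to the assertion that each end of $P$ has asymptotic volume density at most $1$, and then to invoke the asymptotic regularity theorem for minimal ends of finite total scalar curvature.

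First, for a minimal submanifold of $\erre^n$, the identity (\ref{equal-flux-vol}) gives $J_r(R)/J_r^{w_0}(R)=\Vol(D_R)/(V_m R^m)$. Applying Theorem \ref{isoperimetric} with the flat model $w(r)=r$ (for which $M_w^m=\erre^m$ is trivially balanced from below and satisfies $w'>0$), we see that this quotient is non-decreasing in $R$. Therefore it suffices to show
\begin{equation*}
\lim_{R\to\infty}\frac{\Vol(D_R)}{V_m R^m}\leq \mathcal{E}(P).
\end{equation*}

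Next, I would fix $R_0$ large enough that $P\setminus D_{R_0}$ has precisely $\mathcal{E}(P)$ non-compact components $E_1,\dots,E_{\mathcal{E}(P)}$, one for each end. Decomposing
\begin{equation*}
\Vol(D_R)=\Vol(D_{R_0})+\sum_{i=1}^{\mathcal{E}(P)}\Vol(D_R\cap E_i),
\end{equation*}
dividing by $V_m R^m$ and letting $R\to\infty$ kills the bounded first term. The problem therefore reduces to establishing
\begin{equation*}
\lim_{R\to\infty}\frac{\Vol(D_R\cap E_i)}{V_m R^m}\leq 1
\end{equation*}
for each individual end $E_i$.

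The crux is this per-end density bound, and it is precisely here that the finite total scalar curvature hypothesis and the dimensional assumptions (1) or (2) come in. In case (2), $m\geq 3$, one invokes Anderson's structure theorem for minimal subvarieties of $\erre^n$ with $\int\|B^P\|^m\,d\mu<\infty$: every such end is regular at infinity, in the sense that the rescaled family $\lambda^{-1}E_i$ converges smoothly as $\lambda\to\infty$ to an affine $m$-plane through the origin with multiplicity one. The Euclidean monotonicity formula for minimal submanifolds then pins the density at infinity to $1$. In case (1), $m=2$, $n=3$ with embedded ends, one uses Schoen's classification: each embedded finite-total-curvature end in $\erre^3$ is asymptotic to a plane or a catenoid, and in either case the area quotient converges to $1$. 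Summing over the $\mathcal{E}(P)$ ends gives the claimed bound.

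The main obstacle is clearly the asymptotic regularity of a finite-total-curvature end; this is a deep classical result combining PDE estimates and geometric measure theory, and it is exactly what forces the dimensional restrictions listed in (1) and (2). Once that ingredient is granted, the passage from the flux quotient to a sum of per-end densities is essentially formal, relying only on (\ref{equal-flux-vol}), the monotonicity supplied by Theorem \ref{isoperimetric}, and the standard Euclidean monotonicity formula for minimal submanifolds.
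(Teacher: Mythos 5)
Your proposal is correct, and it matches the paper's treatment in the only sense possible: the paper gives no proof of this statement at all, quoting it from Anderson \cite{A1} and Chen \cite{ChSubv}, and your reduction is precisely the argument underlying those citations (essentially Chen's proof) — identify the flux quotient with the volume quotient via (\ref{equal-flux-vol}), use the monotonicity from Theorem \ref{isoperimetric} to reduce to the density at infinity, decompose into the finitely many ends, and invoke the asymptotic regularity of finite-total-scalar-curvature ends (Anderson's multiplicity-one planar ends for $m\geq 3$, Schoen's plane/catenoid classification for embedded ends when $m=2$, $n=3$) to bound each end's density by $1$. The only content not reduced to a formal argument is exactly the deep regularity theory that the statement itself cites, so there is no gap to flag.
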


This relation between the number of ends and the flux quotient allow us to state

\begin{corollary}\label{capacity-ends}Let $P^m$ be a minimal submanifold properly immersed in $\erre^n$ with finite total scalar curvature and either $m\geq 3$, or $m=2$ $n=3$ and each end of $P$ is embedded, then
for any $\rho>0$ and any $R> \rho$
\begin{equation}
1\leq \frac{\C(A_{\rho,R})}{\C(A_{\rho,R}^w)}\leq \mathcal{E}(P)\quad .
\end{equation}
And for the fundamental tone
\begin{equation}
\lambda^*(P)=0\quad.
\end{equation}
\end{corollary}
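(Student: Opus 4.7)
The strategy is simply to assemble the corollary from the comparison results already proved, exploiting the identity (\ref{equal-flux-vol})---which is special to minimal submanifolds of $\erre^n$---in order to pass freely between the flux quotient and the volume quotient, and then to invoke the theorem of \cite{A1,ChSubv} quoted immediately before the corollary.

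For the lower bound, I would apply Theorem \ref{capacity-baix} with the Euclidean warping function $w=w_0(r)=r$ (so $w_0'=1\geq 0$), giving
\[
\frac{J_r(\rho)}{J_r^{w_0}(\rho)}\leq \frac{\C(A_{\rho,R})}{\C(A_{\rho,R}^{w_0})}.
\]
Using (\ref{equal-flux-vol}) the left-hand side equals $\Vol(D_\rho)/(V_m\rho^m)$. Theorem \ref{isoperimetric} asserts that this quotient is non-decreasing in $\rho$, and, since $o\in\varphi(P)$, its limit as $\rho\to 0^+$ is $\geq 1$ by the usual tangent-space density computation at the pre-image of $o$. Monotonicity then yields $\Vol(D_\rho)/(V_m\rho^m)\geq 1$ for every $\rho>0$, giving the lower bound.

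For the upper bound, Theorem \ref{capacity} (applicable since $M_{w_0}^m=\erre^m$ is balanced from below) yields
\[
\frac{\C(A_{\rho,R})}{\C(A_{\rho,R}^{w_0})}\leq \frac{J_r(R)}{J_r^{w_0}(R)},
\]
and the theorem of \cite{A1,ChSubv} displayed just above Corollary \ref{capacity-ends} bounds the right-hand side by $\mathcal{E}(P)$ under the finite total scalar curvature hypothesis (and the dimensional/embedding proviso). Chaining the two inequalities gives the upper bound. For the vanishing of the fundamental tone, the same upper bound together with (\ref{equal-flux-vol}) gives
\[
\Vol_{w_0}(P)=\sup_{R}\frac{\Vol(D_R)}{V_m R^m}=\sup_{R}\frac{J_r(R)}{J_r^{w_0}(R)}\leq \mathcal{E}(P)<\infty,
\]
so $P$ has finite $w_0$-volume and Corollary \ref{lambda-1} delivers $\lambda^*(P)=0$ at once.

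I do not expect any substantial obstacle: the corollary is a packaging result, and the only subtle point is to ensure $\varphi^{-1}(o)\neq\emptyset$ (tacit from Theorem \ref{capacity-baix}) so that the monotone quotient $\Vol(D_\rho)/(V_m\rho^m)$ starts at $1$. Everything else is a direct substitution into Theorems \ref{capacity-baix}, \ref{capacity}, \ref{isoperimetric}, the Euclidean identity (\ref{equal-flux-vol}), and Corollary \ref{lambda-1}.
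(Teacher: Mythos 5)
Your assembly is exactly the paper's (implicit) argument: Theorems \ref{capacity-baix} and \ref{capacity} specialized to $w_0(r)=r$, the Euclidean identity (\ref{equal-flux-vol}) to trade flux for volume quotients, the quoted theorem of \cite{A1,ChSubv} to bound the flux quotient by $\mathcal{E}(P)$, and Corollary \ref{lambda-1} to conclude $\lambda^*(P)=0$ from finite $w_0$-volume. You also correctly flag the same tacit hypothesis ($\varphi^{-1}(o)\neq\emptyset$, needed so the monotone quotient $\Vol(D_\rho)/(V_m\rho^m)$ starts at a value $\geq 1$) that the paper leaves implicit, so the proposal is correct and matches the paper's route.
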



\def\cprime{$'$} \def\cprime{$'$} \def\cprime{$'$} \def\cprime{$'$}
  \def\cprime{$'$}
\providecommand{\bysame}{\leavevmode\hbox to3em{\hrulefill}\thinspace}
\providecommand{\MR}{\relax\ifhmode\unskip\space\fi MR }
\providecommand{\MRhref}[2]{%
  \href{http://www.ams.org/mathscinet-getitem?mr=#1}{#2}
}
\providecommand{\href}[2]{#2}

\end{document}